\DeclareMathOperator*{\bigveebar}{\underline{\bigvee}}
\newtheorem{proposition}{Proposition}
\title{Exact Hull Reformulation for Quadratically Constrained
Generalized Disjunctive Programs}
\author{
Sergey Gusev \\
Davidson School of Chemical Engineering \\
Purdue University
\and
David E. Bernal Neira \\
Davidson School of Chemical Engineering \\
Purdue University
}
\date{}
\begin{document}

\maketitle

\begin{abstract}
Generalized Disjunctive Programming (GDP) provides a natural framework for optimization models that combine logical decisions with nonlinear constraints.
The Hull Reformulation (HR) is attractive because it yields tight continuous relaxations, but for nonlinear disjunctive constraints, it is commonly implemented using an $\varepsilon$-approximation of the closure of the perspective function.
This approximation introduces fractional expressions, enlarges the relaxation for any $\varepsilon>0$, can cause numerical instability, and may hinder solver convexity recognition.

This paper develops a framework for constructing exact hull reformulations for GDPs with quadratic disjunctive constraints that avoids $\varepsilon$-approximation while preserving solver-friendly structure.
For general (possibly non-convex) quadratic constraints, we derive a General Exact Hull Reformulation (GEHR) that eliminates perspective division and preserves quadratic degree, and we prove that it is equivalent to the standard closed-perspective hull in lifted space.
For convex quadratic constraints ($Q\succeq 0$), we propose using a Conic Exact Hull Reformulation (CEHR), re-derived in this work, that represents the perspective term with rotated second-order cone constraints, enabling conic-capable solvers to certify and exploit convexity directly.

We implement both reformulations in \texttt{Pyomo.GDP} and evaluate them on random convex and non-convex instances, a CSTR network benchmark, $k$-means clustering, and constrained layout problems using \texttt{Gurobi}, \texttt{SCIP}, and \texttt{BARON}.
Across these benchmarks, the proposed exact formulations reduce numerical failures and often improve runtime relative to the $\varepsilon$-approximation.
In particular, CEHR is consistently the most reliable and typically the fastest hull reformulation on convex benchmarks, while GEHR performs best on non-convex benchmarks, improving both robustness and overall performance compared to  $\varepsilon$-approximation.
\end{abstract}

\noindent\textbf{Keywords:} Generalized Disjunctive Programming,
Exact Hull Reformulation,
Perspective Function,
Quadratic Constraints,
Mixed-Integer Nonlinear Programming,
Non-convex Generalized Disjunctive Programming

\section{Introduction}

Problems involving continuous and discrete variables, including those that require making discrete decisions, can generally be represented in the form of Mixed-Integer Nonlinear Programs (MINLPs).
However, formulating problems involving a large number of discrete decisions using traditional MINLP frameworks can become overly complex and counterintuitive.
For instance, common tasks in Process Systems Engineering (PSE), such as selecting process units or determining process configurations, often result in cumbersome algebraic constraints and numerous binary variables.
When modeled in the classical form of MINLP, such problems can be challenging for practitioners to implement and interpret \cite{mencarelliReviewSuperstructureOptimization2020, grossmannSystematicModelingDiscretecontinuous2013, kronqvist50YearsMixedinteger2025}.

To address these limitations, Generalized Disjunctive Programming (GDP) has emerged as a higher-level modeling paradigm that more naturally integrates logical decision-making with algebraic constraints.
Introduced by Raman and Grossmann~\cite{ramanModellingComputationalTechniques1994}, GDP extends Balas's disjunctive programming framework \cite{balasDisjunctiveProgramming2018} to include nonlinear constraints and logical relations \cite{chenModernModelingParadigms2019, mencarelliReviewSuperstructureOptimization2020, grossmannSystematicModelingDiscretecontinuous2013}.
In GDP, discrete alternatives, such as choosing among different equipment configurations or operational modes, are explicitly represented through Boolean variables and logical disjunctions (OR conditions) rather than through algebraically encoded binary variables.
This approach enables clearer and more intuitive formulations compared to conventional MINLP models, making it much easier to formulate the model and apply the framework to practical challenges in real-world scenarios \cite{grossmannGeneralizedDisjunctiveProgramming2012}.

A GDP problem can be formally expressed as follows \cite{ramanModellingComputationalTechniques1994,chenModernModelingParadigms2019}:

\begingroup
\begin{equation}
\begin{aligned}
    \min_{\mathbf{x}, \mathbf{Y}} \quad 
    & f(\mathbf{x}) \\
    \text{s.t.} \quad 
    & g(\mathbf{x}) \leq \mathbf{0}, \\
    & \bigvee_{i \in D_k} 
        \begin{bmatrix} 
            Y_{ik} \\
            h_{ik}(\mathbf{x}) \leq \mathbf{0} 
        \end{bmatrix}, \quad \forall\, k \in K, \\
    & \underset{i \in D_k}{\underline{\bigvee}} Y_{ik}, \quad \forall\, k \in K, \\
    & \Omega(\mathbf{Y}) = \text{True}, \\
    & \mathbf x^{\ell} \leq \mathbf{x} \leq \mathbf{x}^u, \\
    & \mathbf{x} \in \mathbb{R}^n, \\
    & Y_{ik} \in \{\text{False}, \text{True}\}, \quad \forall\, k \in K,\; i \in D_k.
\end{aligned}
\qquad \text{(GDP)}
\label{eq:GDP}
\end{equation}
\endgroup

In this GDP formulation, $\mathbf{x} \in \mathbb{R}^n$ denotes the continuous decision variables and 
$\mathbf{Y}$ the collection of Boolean variables. 
The objective function is given by $f: \mathbb{R}^n \to \mathbb{R}$. 
Global constraints are represented by $g: \mathbb{R}^n \to \mathbb{R}^p$, 
where $p$ is the number of global constraints. 
The mapping $g:\mathbb{R}^n \to \mathbb{R}^p$ is vector-valued. Hence, the constraint
$g(\mathbf{x}) \le \mathbf{0}$ is interpreted elementwise, i.e., $g_j(\mathbf{x}) \le 0$
for all $j=1,\dots,p$.
Each disjunct $i$ within disjunction $k$ is associated with 
$h_{ik} : \mathbb{R}^n \to \mathbb{R}^{q_{ik}}$, 
where $q_{ik}$ is the number of constraints in that disjunct. 
The condition $h_{ik}(\mathbf{x}) \leq \mathbf{0}$ is likewise understood 
componentwise, i.e., $h_{ik,\ell}(\mathbf{x}) \leq 0$ for all $\ell = 1,\dots,q_{ik}$. 
The set $K$ indexes all disjunctions, and $D_k$ denotes the set of disjuncts within disjunction $k$. 
The exclusive OR (XOR) condition,
\(
\underset{i \in D_k}{\underline{\bigvee}} Y_{ik},
\)
ensures exactly one Boolean variable $Y_{ik}$ per disjunction $k$ is \text{True}. 
Possible logical relationships among Boolean variables are represented by $\Omega(\mathbf{Y}) = \text{True}$. 
These logical conditions can be systematically translated into algebraic constraints, enabling their integration 
into conventional optimization frameworks \cite{grossmannSystematicModelingDiscretecontinuous2013}.

Although GDPs can be solved directly using specialized logic-based solvers, the more common practice is to reformulate GDPs into MINLP problems and leverage the maturity and broad availability of MINLP solvers~\cite{grossmannReviewNonlinearMixedInteger2002, ovalleLogicBasedDiscreteSteepestDescent2025, ruizGeneralizedDisjunctiveProgramming2012, kronqvist50YearsMixedinteger2025}.
Among the various reformulation strategies,  the most common approach is to use Big-M reformulation. Additionally, the Hull Reformulation (HR) is particularly notable due to its tight continuous relaxation properties, which could enhance solver performance by providing strong lower bounds~\cite{grossmannGeneralizedConvexDisjunctive2003}.

One of the major challenges in applying HR to nonlinear constraints in GDP problems arises from numerical instabilities associated with existing methods to reformulate constraints, such as the widely used \(\varepsilon\)-approximation of the closure of the perspective function~\cite{furmanComputationallyUsefulAlgebraic2020}.
\footnote{The term ``\(\varepsilon\)-approximation of the closure of the perspective function'' is often referred to simply as ``\(\varepsilon\)-approximation'' in this paper.
When we discuss properties or drawbacks of "\(\varepsilon\)-approximation", we are referring to those of the full method (i.e., the application of the $\varepsilon$-approximation of the closure of the perspective function), not only to the fact that it is an approximation rather than an exact function.}
Although computationally practical, the \(\varepsilon\)-approximation complicates the function structure relative to the original constraints in GDP, leading to numerical issues, making convexity detection more difficult, and, since it is an approximation, enlarging the feasible region of the continuous relaxation.

This motivated the present work.
We aim to eliminate the reliance on the $\varepsilon$-approximation when applying HR to GDPs with quadratic disjunctive constraints, while preserving the original algebraic structure that modern MIQCP and conic solvers can exploit.
Concretely, we derive an exact HR for general quadratic constraints (including non-convex quadratics) that avoids fractional perspective terms and does not enlarge the continuous relaxation.
For the convex quadratic case ($Q\succeq 0$), we re-derive an exact conic formulation based on rotated second-order cone representability, which preserves convexity and enables conic-capable solvers to recognize and exploit the resulting structure directly.
We implement these reformulations in \texttt{Pyomo.GDP}~\cite{chenPyomoGDPEcosystemLogic2022} and evaluate their computational impact across several benchmark families.

The remainder of the paper is structured as follows.
Section~\ref{sec:gdp-strategies} reviews common GDP-to-MINLP reformulation strategies, including HR and the numerical issues associated with the $\varepsilon$-approximation.
Section~\ref{sec:hr_q-gdp} introduces the class of quadratically constrained GDPs considered in this work. 
Section~\ref{sec:quadratic-hull} presents the proposed General Exact Hull Reformulation (GEHR) together with an equivalence proof.
Section~\ref{sec:conic-gdp} derives the Conic Exact Hull Reformulation (CEHR) for convex quadratic disjunctive constraints and discusses its implications for convexity recognition.
Section~\ref{sec:experiments} reports computational experiments on randomly generated convex and non-convex instances, a CSTR network optimization problem, $k$-means clustering, and constrained layout problems.
Finally, Section~\ref{sec:conclusion} concludes with a summary of findings and directions for future work.

\section{GDP to MINLP Reformulation Strategies}
\label{sec:gdp-strategies}

GDP problems are commonly solved by reformulating them into MINLP problems.
Although several reformulation approaches exist, all typically share a common structure.
All MINLP reformulations introduce binary variables to represent Boolean conditions and convert propositional logic into linear constraints, but they differ in how they reformulate the constraints within the disjunctions.
The disjunction-independent part of the MINLP reformulation can be presented as follows:

\begingroup
\begin{equation}
\setlength{\jot}{0pt} 
\begin{aligned}
    \min_{\mathbf{x}, \mathbf{y}} \quad 
    & f(\mathbf{x}) \\
    \text{s.t.} \quad 
    & g(\mathbf{x}) \leq\mathbf{0}, \\
    & \sum_{i \in D_k} y_{ik} = 1 
        \quad \forall\, k \in K,\\
    & \mathbf{E} \mathbf{y} \geq \mathbf{e}, \\
    & \mathbf{x}^{\ell} \leq \mathbf{x} \leq \mathbf{x}^{u}, \\
    & \mathbf{x} \in \mathbb{R}^n, \\
    & y_{ik} \in \{0, 1\}
        \quad \forall\, k \in K,\, i \in D_k.
\end{aligned}
\qquad \text{(MINLP without Disjunctions)}
\label{eq:MINLP}
\end{equation}
\endgroup

In this formulation, the Boolean variables  \(Y_{ik}\) from the original GDP are converted to binary variables $y_{ik}\in\{0,1\}$, which activate or deactivate the corresponding disjunctive constraints.
The constraint $\sum_{i \in D_k} y_{ik}=1$ represents the exclusive OR (XOR) condition \(\underset{i \in D_k}{\underline{\bigvee}} Y_{ik}\), ensuring that exactly one disjunct from each disjunction set \( D_k\) is selected.
All logical relationships between disjuncts, originally denoted by \( \Omega(\mathbf{Y}) = \text{True}\) are translated into linear constraints \(\mathbf{E} \mathbf{y} \geq \mathbf{e}\) using established methods \cite{grossmannSystematicModelingDiscretecontinuous2013}.

However, the presented MINLP reformulation does not yet specify how the disjunctive constraints \( h_{ik}(\mathbf{x}) \leq \mathbf{0}\) should be handled.
Each MINLP reformulation strategy differs mainly in how it reformulates these constraints.
The following subsections address how this challenge is handled by reformulations used in this paper.

\subsection{Big-M Reformulation}

In the Big-M reformulation, constraints within each disjunct are activated conditionally using binary variables and large constants (``Big-M'')\cite{trespalaciosReviewMixedIntegerNonlinear2014}. Specifically, disjunctive constraints are reformulated as:

\begingroup
\begin{equation}
\begin{aligned}
    h_{ik}(\mathbf{x}) \leq \mathbf{M_{ik}}(1 - y_{ik})
        \quad \forall\, k \in K,\, i \in D_k, \\
\end{aligned}
\qquad \text{(Big-M Reformulation)}
\end{equation}
\endgroup

\noindent where \(\mathbf{M_{ik}}\) is a vector of sufficiently large constants chosen such that when $y_{ik}=0$ the constraint is always evaluated to values less than $\mathbf{M_{ik}}$ in the domain of the problem, and thus the constraint becomes trivially satisfied regardless of the value of $\mathbf{x}$.
On the other hand, when $y_{ik}=1$, the constraint reverts exactly to its original form $h_{ik}(\mathbf{x})\leq \mathbf{0}$, activating the corresponding disjunct.

The quality of continuous relaxation in the case of Big-M reformulation strongly depends on the choice of values of constants in $\mathbf{M_{ik}}$.
Excessively large values lead to weaker continuous relaxations and poorer lower bounds, thus increasing computational effort.
Conversely, selecting tighter values improves relaxation quality and solver performance but requires careful, problem-specific tuning.
Despite yielding potentially weaker relaxation compared to HR \cite{grossmannGeneralizedConvexDisjunctive2003}, the Big-M reformulation remains an attractive and widely used approach due to its simplicity, ease of implementation, and the fact that it does not increase the number of variables or constraints compared to the original GDP~\cite{kronqvistStepsIntermediateRelaxations2021}.

\subsection{Hull Reformulation}

The Hull Reformulation (HR) provides tighter continuous relaxations and stronger lower bounds for GDP problems than other common approaches, such as Big-M.
HR ensures that the continuous relaxation of each disjunction in the resulting MINLP corresponds exactly to the convex hull of all disjuncts in a disjunction, provided the constraints within disjunctions are convex \cite{grossmannGeneralizedConvexDisjunctive2003,bernalneiraConvexMixedintegerNonlinear2024, kronqvistStepsIntermediateRelaxations2021}.

\subsubsection{ Reformulation of Disjunctions}
HR of disjunctions can be expressed as follows:

\begingroup
\begin{equation}
\begin{aligned}
    & \mathbf{x} = \sum_{i \in D_k} \mathbf{v}_{ik}, \quad k \in K \\
    & \left( \text{cl} \, \widetilde{h}_{ik} \right)(\mathbf{v}_{ik}, y_{ik}) \leq \mathbf{0}, \quad k \in K,\, i \in D_k \\
    & \mathbf x^{\ell} y_{ik} \leq \mathbf{v}_{ik} \leq \mathbf{x}^u y_{ik} \\
    & \mathbf{v}_{ik} \in \mathbb{R}^n, \quad k \in K,\, i \in D_k \\
\end{aligned}
\qquad \text{(HR)}
\label{eq:HR}
\end{equation}
\endgroup

The reformulation introduces disaggregated variables \(\mathbf{v}_{ik}\) as separate copies of the original variables \(\mathbf{x}\), each associated with disjunct \(ik\) in disjunction \(D_k\). 
HR generally assumes finite bounds on all variables in the original GDP, and this assumption is adopted for all problems and reformulations considered in this paper. Under these conditions, the constraints
\(
\mathbf{x}^{\ell} y_{ik} \leq \mathbf{v}_{ik} \leq \mathbf{x}^{u} y_{ik}
\)
ensure that $\mathbf{v}_{ik} = 0$ whenever the associated binary variable $y_{ik} = 0$.
The disjunctive constraint functions \(h_{ik}(\mathbf{x})\) are replaced by the closure of the perspective function
\(
\left(\operatorname{cl}\,\widetilde{h}_{ik}\right)(\mathbf{v}_{ik},y_{ik})
\)~\cite{ceriaConvexProgrammingDisjunctive1999}.
Provided \(h_{ik}\) is defined and finite, and taking into account that the disaggregation bounds
\(\mathbf{x}^{\ell} y_{ik} \le \mathbf{v}_{ik} \le \mathbf{x}^{u} y_{ik}\)
imply \(\mathbf{v}_{ik}=\mathbf{0}\) whenever \(y_{ik}=0\), the closure of the perspective function in HR domain can be written as~\cite{stubbsBranchandcutMethod011999,grossmannGeneralizedConvexDisjunctive2003,gunlukPerspectiveReformulationApplications2012,furmanComputationallyUsefulAlgebraic2020}:

\begingroup
\begin{equation}
    \left( \text{cl} \, \widetilde{h} \right)(\mathbf{v}, y) = 
\begin{cases} 
    y h\left(\frac{\mathbf{v}}{y}\right) & \text{if } y > 0 \\[5pt]
    \mathbf{0}  & \text{if } y = 0 \\[1pt]
\end{cases}
\label{eq:closure}
\end{equation}
\endgroup

Clearly, when \(y_{ik}=1\), the original constraint \(h_{ik}(\mathbf{x})\leq \mathbf{0}\)  is recovered precisely.
On the other hand, when \(y_{ik}=0\), the reformulated constraint simplifies to \(\mathbf{0}\leq\mathbf{0}\), making the constraint trivially satisfied.

\subsubsection{Convex and Non-Convex Constraints}

When dealing with convex GDPs, where each disjunctive constraint is convex, the continuous relaxation of HR creates a convex hull of disjuncts in a disjunction \cite{grossmannGeneralizedConvexDisjunctive2003}, which is the tightest possible convex relaxation for the original disjunctive structure.
This property is the key theoretical motivation behind HR and is what makes it particularly attractive.

However, the behavior of the closure of the perspective function at binary values, described above, and thus the validity of reformulation, is independent of convexity, making it a valid reformulation to apply to non-convex GDPs as well (Appendix~\ref{app:bm-to-hull}).
Applying HR directly to non-convex GDPs yields non-convex MINLP problems that can be solved by global optimization solvers.

For non-convex disjunctive constraints, calling this reformulation a “Hull Reformulation (HR)” can be a misnomer, since the relaxation is generally not the convex hull of the disjunction.
Nevertheless, we use the term HR throughout the paper to refer to the same reformulation construction.
In this non-convex setting, the relaxation itself can be non-convex, and global solvers may construct additional convex relaxations internally (to compute lower bounds), so the convex region explored during the solution process may be larger than the non-convex relaxation obtained directly from HR.

\subsubsection{Numerical Challenges and $\varepsilon$-approximation}

Equation~\eqref{eq:HR} outlines the general strategy for handling constraints in HR.
In the case of linear constraints of the form \(A\mathbf{x}\leq b\), instead of using the definition of closure of the perspective function, the reformulation of linear constraints can be expressed simply as \(A\mathbf{v}\leq b y\)~\cite{balasDisjunctiveProgramming2018}, which preserves linearity and exactly analytically describes the convex hull of the disjunction.

However, nonlinear constraints present a numerical challenge since the closure of the perspective function is not defined analytically at \(y=0\).
To overcome this issue, \(\varepsilon\)-approximation method (where \(\varepsilon\) is a small number, e.g. \(10^{-4}\)) proposed by Furman et al.~\cite{furmanComputationallyUsefulAlgebraic2020}, is typically employed:

\begin{equation}
\left(\text{cl}\,\widetilde{h}\right)(\mathbf{x},y)\approx\left((1-\varepsilon)y+\varepsilon\right)h\left(\frac{\mathbf{x}}{(1-\varepsilon)y+\varepsilon}\right)-\varepsilon\,h(0)(1-y)
\label{eq:approx}
\end{equation}

Although computationally practical and exact at binary values of $y$, $\varepsilon$-approximation results in exact HR for relaxation only when \(\varepsilon\rightarrow0\), and however small, any values \(\varepsilon>0\) enlarge the feasible region of the continuous relaxation \cite{furmanComputationallyUsefulAlgebraic2020}, potentially weakening solver effectiveness.
Additionally, application of $\varepsilon$-approximation of the closure of the perspective function creates a more complex functional structure compared to the original disjunctive constraints, introducing numerical instabilities and making convexity detection more difficult, further motivating the development of exact HR approaches presented later in this work.

Figure~\ref{fig:epsilon_approx} illustrates the feasible region of continuous relaxation obtained from applying HR to convex and non-convex GDPs, providing some intuition about the relaxation's feasible region and its enlargement when $\varepsilon$-approximation is used.

\begin{figure}[htbp]
  \centering

  \begin{subfigure}[t]{0.48\textwidth}
    \centering
    \includegraphics[width=\textwidth]{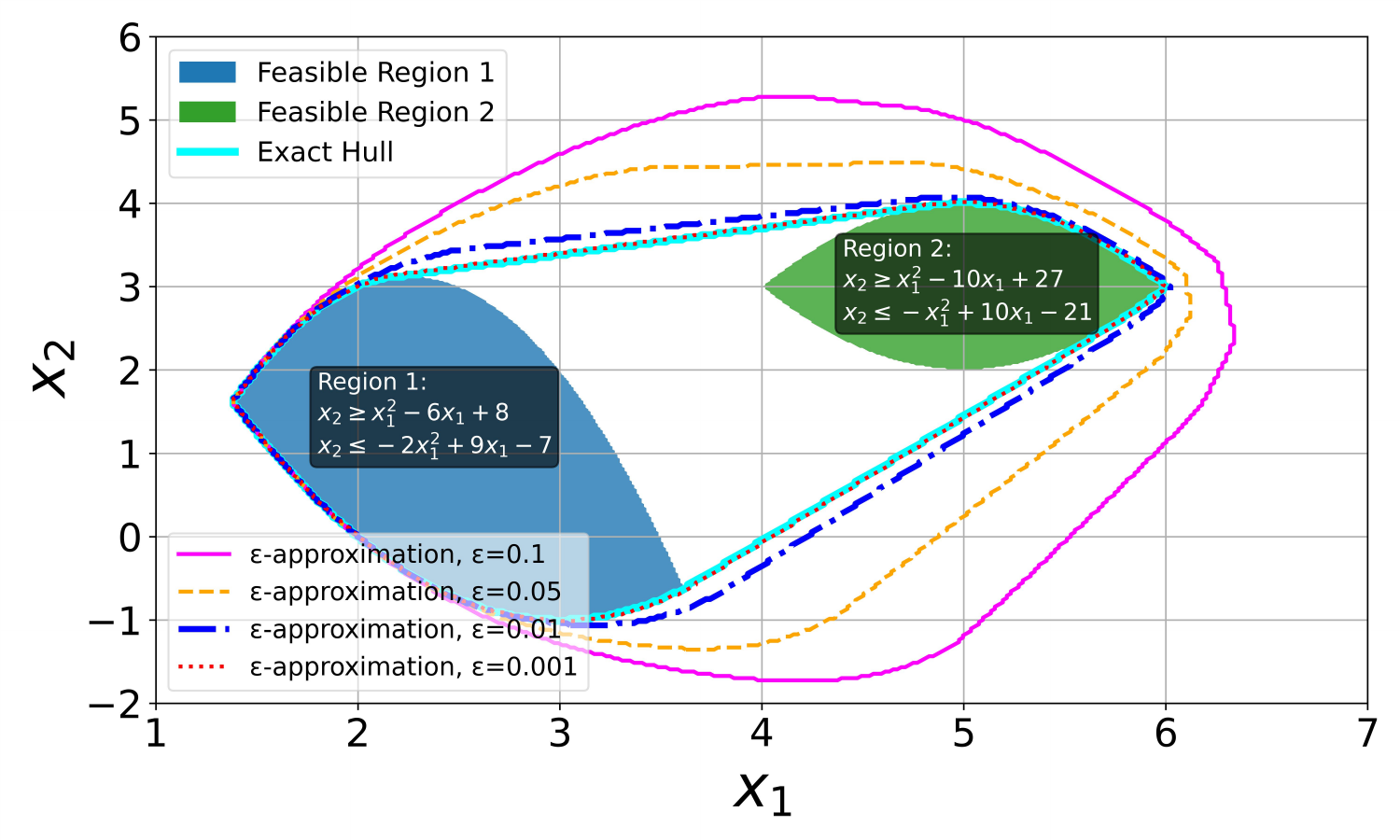}
    \caption{Convex GDP}
    \label{fig:convex_region}
  \end{subfigure}
  \hfill
  \begin{subfigure}[t]{0.48\textwidth}
    \centering
    \includegraphics[width=\textwidth]{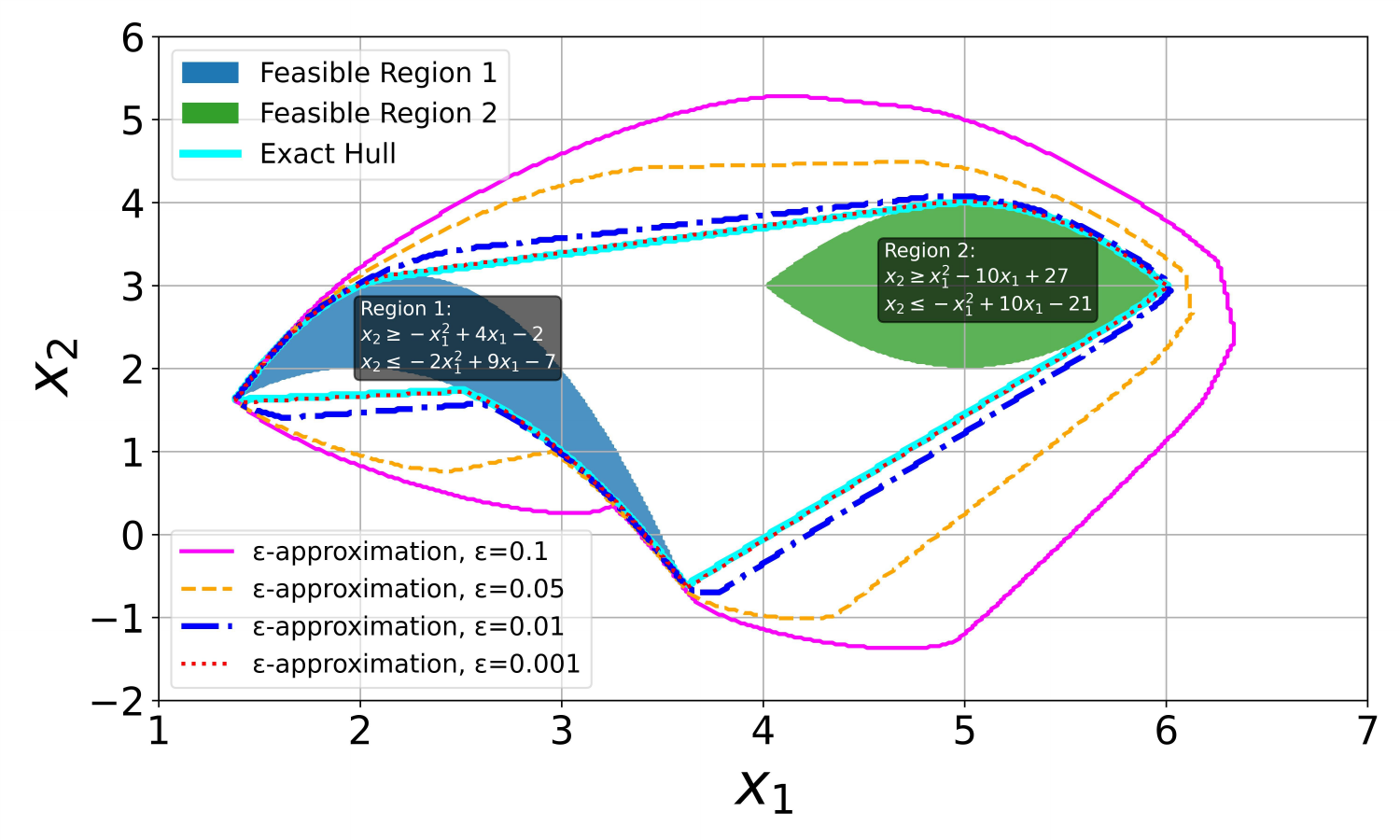}
    \caption{Non-convex GDP}
    \label{fig:epsilon_approx_sub}
  \end{subfigure}

  \caption{Illustration of the effect of the $\varepsilon$-approximation on the feasible region of the continuous relaxation resulting from HR applied to convex and non-convex GDP. The figure shows the exact feasible region with the enlarged region obtained using the $\varepsilon$-approximation, highlighting how the approximation leads to a relaxation that is larger than the exact one and weakens as $\varepsilon$ increases. The Big-M relaxation is not shown, as its feasible region would significantly exceed the plotted area, taking up most of the figure at this scale and axis limits.}

  \label{fig:epsilon_approx}
\end{figure}

\subsubsection{Trade-offs and Practical Considerations}

HR significantly improves the quality of the continuous relaxation, leading to tighter lower bounds.
However, this comes at the cost of increased model complexity due to the introduction of additional variables and constraints \cite{grossmannGeneralizedDisjunctiveProgramming2012}.
This trade-off between stronger relaxations and higher dimensionality makes it difficult to determine in general whether HR will outperform the Big-M reformulation in terms of solution time, and the results vary on a case-by-case basis \cite{kronqvistStepsIntermediateRelaxations2021, kronqvist50YearsMixedinteger2025}.

\subsection{Binary Multiplication Reformulation}

One of the simplest and most straightforward methods to transform the GDP problem into a MINLP formulation is the binary multiplication approach.
This method involves directly multiplying each constraint within a disjunct by the binary variable associated with the disjunct, which activates or deactivates the corresponding constraint.
The Binary Multiplication reformulation of disjunctions can be expressed as:

\begingroup
\begin{equation}
\begin{aligned}
    & y_{ik}h_{ik}(\mathbf{x}) \leq \mathbf{0}
        \quad \forall\, k \in K,\, i \in D_k, \\
\end{aligned}
\qquad \text{(Binary Multiplication Reformulation)}
\end{equation}
\endgroup

In this formulation, when the binary variable \(y_{ik} = 0\), the corresponding constraint becomes \(\mathbf{0}\leq\mathbf{0}\), which is trivially satisfied. When \(y_{ik} = 1\), the original constraint \(h_{ik}(\mathbf{x}) \leq \mathbf{0}\) is recovered.
Therefore, the disjunctive logic of the original GDP formulation is correctly enforced, and the reformulation is valid.

However, the main drawback of this approach is that the continuous relaxation of the reformulation is often non-convex.
In fact, when projected to $\mathbf{x}$, the continuous relaxation has the same feasible region as the original MINLP with binary values, providing no additional convex relaxation benefit to the solver.
As a result, for almost any problem, MINLP solvers must rely on internal convexification techniques to navigate the non-convex feasible space efficiently during the branch-and-bound procedure.

Despite its lack of favorable relaxation properties, the binary multiplication reformulation is valid and easy to implement.
It can be directly applied to GDP problems and passed to a general MINLP solver, which can handle non-convex problems.

\section{Hull Reformulations of Quadratically Constrained GDP}
\label{sec:hr_q-gdp}

In this work, we focus on HR of GDPs with disjunctive quadratic constraints.
Disjunctions of quadratically constrained GDP can be expressed as follows:

\begingroup
\begin{equation}
\setlength{\jot}{0pt}
\begin{aligned}
    &\bigvee_{i \in D_k} 
        \begin{bmatrix} 
            Y_{ik} \\
            \mathbf{x}^\top Q_{jik} \mathbf{x} + \mathbf{c}_{jik}^\top \mathbf{x} + d_{jik} \leq 0 ,\, j \in J_{ik}
        \end{bmatrix}, \quad k \in K \\
\end{aligned}
\label{eq:Q-GDP}
\end{equation}
\endgroup

\noindent where \(J_{ik}\) is a set of quadratic constraints in a disjunct, \(Q \in \mathbb{R}^{n \times n}, \mathbf{c} \in \mathbb{R}^n, d \in \mathbb{R}\).

The application of HR to the quadratically constrained disjunctions, using \(\varepsilon\)-approximation of the closure of the perspective function shown in Equation~\eqref{eq:approx}, after algebraic simplification yields:

\begingroup
\begin{equation}
\begin{aligned}
    & \left(\operatorname{cl}\,\widetilde{h}\right)(\mathbf{v},y) \approx \frac{\mathbf{v}^\top Q_{jik}  \mathbf{v}}{(1-\varepsilon)y + \varepsilon} + \mathbf{c}_{jik}^\top \mathbf{v} + d_{jik} y \leq 0, \quad j \in J_{ik},\, k \in K,\, i \in D_k \\
\end{aligned}
\label{eq:Q-HR}
\end{equation}
\endgroup

Thus, it is clear that application of \(\varepsilon\)-approximation to the quadratically constrained problem results in an MINLP with a more complicated functional structure as the quadratic term is replaced by a fractional term of the form \( \frac{\mathbf{v}^\top Q_{jik}  \mathbf{v}}{(1-\varepsilon)y + \varepsilon}  \).

\subsection{General Exact Hull Reformulation of Quadratically Constrained GDP}
\label{sec:quadratic-hull}

Consider a GDP in which the disjunctions contain quadratic, possibly non-convex, constraints of the form
\(
    h(\mathbf x)\;=\;\mathbf x^{\mathsf T}Q\mathbf x
                 + \mathbf{c}^{\mathsf T}\mathbf x + d \leq 0,
\)
\noindent with finite bounds \( \mathbf{x}^{\ell} \leq \mathbf{x} \leq \mathbf{x}^{u}\), and continuous relaxation obtained through the standard exact HR.  
In lifted space, the feasible set of the continuous relaxation associated with a single constraint is  
\begin{equation}
    \begin{aligned}
        S_{1}
        \;=\;
        \Bigl\{
            (\mathbf v,y) \in \mathbb{R}^n \times \mathbb{R} \mid y \in [0, 1], \;
            \operatorname{cl}\widetilde h(\mathbf v,y)\le 0,\;
            \mathbf x^{\ell}y\le \mathbf v\le \mathbf x^{u}y
        \Bigr\},
    \end{aligned}
    \label{eq:s1}
\end{equation}
where the closure of the perspective function simplifies to  
\begingroup
\begin{equation}
    \operatorname{cl}\widetilde h(\mathbf v,y)
    \;=\;
    \begin{cases}
        \dfrac{\mathbf v^{\mathsf T}Q\mathbf v}{y}
        + \mathbf{c}^{\mathsf T}\mathbf v
        + dy, & y>0, \\[6pt]
        0, & y=0.
    \end{cases}
    \label{eq:q-closure}
\end{equation}
\endgroup

Multiplying the function \( \dfrac{\mathbf v^{\mathsf T}Q\mathbf v}{y}
        + \mathbf c^{\mathsf T}\mathbf v
        + dy\)  by~\(y\) motivates the alternative
set:
\begin{equation}
\begin{aligned}
S_{2} & =  \left\{ (\mathbf{v}, y) \in \mathbb{R}^n \times \mathbb{R} \mid y \in [0, 1], \; \mathbf{v}^\top Q \mathbf{v} + (\mathbf{c}^\top \mathbf{v}) y + d y^2 \leq 0,  \; \mathbf x^{\ell} y \leq \mathbf{v} \leq \mathbf{x}^u y \right\} &
\end{aligned}
\label{eq:s2}
\end{equation}

We demonstrate the equivalence of these two sets in the following proposition:

\begin{proposition}
\label{prop:sets}
For a quadratic constraint \(
    h(\mathbf v)\;=\;\mathbf v^{\mathsf T}Q\mathbf v
                 + \mathbf{c}^{\mathsf T}\mathbf v + d \leq 0,
\) define the sets
\begin{equation*}
\begin{aligned}
S_1 & = \left\{ (\mathbf{v}, y) \in \mathbb{R}^n \times \mathbb{R} \,\middle|\, y \in [0, 1], \; \operatorname{cl} \widetilde{h}(\mathbf{v}, y) \leq 0, \; \mathbf x^{\ell} y \leq \mathbf{v} \leq \mathbf{x}^u y \right\}
\end{aligned}
\end{equation*}
where
\begingroup
\begin{equation*}
\begin{aligned}
    \left( \text{cl} \, \widetilde{h} \right)(\mathbf{v}, y) = 
\begin{cases} 
    \frac{\mathbf{v}^\top Q  \mathbf{v}}{y} + \mathbf{c}^\top \mathbf{v} + d y  & \text{if } y > 0 \\[5pt]
    0 & \text{if } y = 0 \\[1pt]
\end{cases}
\end{aligned}
\end{equation*}
\endgroup
and
\[
S_{2} =  \left\{ (\mathbf{v}, y) \in \mathbb{R}^n \times \mathbb{R} \mid y \in [0, 1], \; \mathbf{v}^\top Q \mathbf{v} + (\mathbf{c}^\top \mathbf{v}) y + d y^2 \leq 0,  \; \mathbf x^{\ell} y \leq \mathbf{v} \leq \mathbf{x}^u y \right\}
\]
Then, the two sets coincide: \( S_1 = S_2\) 
\end{proposition}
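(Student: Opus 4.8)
The plan is to prove the set equality pointwise, partitioning the admissible region according to whether \(y>0\) or \(y=0\). First I would observe that both sets share the identical constraints \(y\in[0,1]\) and the box \(\mathbf x^{\ell}y\le \mathbf v\le \mathbf x^{u}y\); only the nonlinear constraint differs between them. Hence it suffices to show that, on this common domain, the inequality \(\operatorname{cl}\widetilde h(\mathbf v,y)\le 0\) defining \(S_1\) holds if and only if \(\mathbf v^{\mathsf T}Q\mathbf v+(\mathbf c^{\mathsf T}\mathbf v)y+dy^2\le 0\), the inequality defining \(S_2\).

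For the case \(y>0\), I would multiply the \(S_1\) inequality \(\tfrac{\mathbf v^{\mathsf T}Q\mathbf v}{y}+\mathbf c^{\mathsf T}\mathbf v+dy\le 0\) through by \(y\). Because \(y>0\), this multiplication preserves the direction of the inequality and is reversible (dividing back by \(y\) recovers the original), so the two inequalities are logically equivalent on \(\{y>0\}\). This immediately yields the agreement of \(S_1\) and \(S_2\) at every point with \(y>0\).

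The remaining, and only delicate, case is \(y=0\). Here I would invoke the box constraint: at \(y=0\) both bounds collapse to \(\mathbf x^{\ell}\cdot 0=\mathbf 0\) and \(\mathbf x^{u}\cdot 0=\mathbf 0\), forcing \(\mathbf v=\mathbf 0\) regardless of the signs of \(\mathbf x^{\ell}\) and \(\mathbf x^{u}\). At \((\mathbf v,y)=(\mathbf 0,0)\), the \(S_1\) constraint reads \(\operatorname{cl}\widetilde h(\mathbf 0,0)=0\le 0\) by the convention defining the closure at \(y=0\), while the \(S_2\) constraint reads \(\mathbf 0^{\mathsf T}Q\mathbf 0+0+0=0\le 0\). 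Thus, on the slice \(y=0\), both sets contain exactly the single point \((\mathbf 0,0)\) and therefore agree there as well.

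Combining the two cases gives \(S_1=S_2\). I expect the argument to be essentially routine; the only point requiring care is the boundary \(y=0\), where the perspective function is specified by its closure \emph{value} rather than by the algebraic formula, and where one must use the box constraint to pin \(\mathbf v\) to the origin so that the algebraic quadratic constraint and the closure convention coincide. Notably, no convexity assumption on \(Q\) is needed, since the reasoning is a pointwise equivalence of the two constraint definitions.
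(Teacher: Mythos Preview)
Your proposal is correct and follows essentially the same route as the paper's own proof: a case split on \(y>0\) versus \(y=0\), using that multiplication by \(y>0\) is an invertible order-preserving operation in the first case, and that the box constraint forces \(\mathbf v=\mathbf 0\) in the second so that both inequalities reduce to \(0\le 0\). Your observation that no convexity of \(Q\) is required is also consistent with the paper's treatment.
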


\begin{proof}

\textbf{Case \(y>0\):}  
Since \(y\) is positive, the constraint \(\frac{\mathbf{v}^\top Q \mathbf{v}}{y} + \mathbf{c}^\top \mathbf{v} + d y \leq 0\) can be multiplied by \(y\) to yield a constraint with the same feasible region:
\begin{equation*}
\frac{\mathbf{v}^\top Q \mathbf{v}}{y} + \mathbf{c}^\top \mathbf{v} + d y \leq 0 \quad \Longleftrightarrow \quad \mathbf{v}^\top Q \mathbf{v} + (\mathbf{c}^\top \mathbf{v}) y + d y^2 \leq 0, \quad \text{when } y>0.
\end{equation*}

\noindent
\textbf{Case \(y=0\):}   
 The constraints \(\mathbf x^{\ell}y\le\mathbf v\le\mathbf x^{u}y\) (finite bounds) force
\(\mathbf v=\mathbf 0\) at \(y=0\).

Substituting \((\mathbf 0,0)\) into \(\mathbf{v}^\top Q \mathbf{v} + (\mathbf{c}^\top \mathbf{v}) y + d y^2  \) yields~0, identical to
\(\operatorname{cl}\widetilde h(\mathbf 0,0)\).
Thus, the feasible points with
\(y=0\) are the same in both sets.

Since the feasible regions coincide in both subcases, the equivalence \( S_1 = S_2\) is established.
\end{proof}

Hence, HR for quadratically constrained GDP problems can be exactly expressed using set \(S_2\) for each quadratic constraint, avoiding the need for approximations and maintaining quadratic functional structure in the reformulation.
Consequently, when applying HR, it is possible to use \(S_2\) for any disjunctive quadratic constraint. In this case, the General Exact Hull Reformulation (GEHR) of quadratically constrained disjunctions shown in Equation~\eqref{eq:Q-GDP} can be expressed as:

\begingroup
\begin{equation}
\begin{aligned}
    & \mathbf{x} = \sum_{i \in D_k} \mathbf{v}_{ik}, \quad k \in K \\
    & \mathbf{v}_{ik}^\top Q_{jik} \mathbf{v}_{ik} + \mathbf{c}^\top_{jik} \mathbf{v}_{ik} y_{ik} 
      + d_{jik} y_{ik}^2 \leq 0, \quad j \in J_{ik},\, k \in K,\, i \in D_k \\
    & \mathbf x^{\ell} y_{ik} \leq \mathbf{v}_{ik} \leq \mathbf{x}^u y_{ik} \\
    & \mathbf{v}_{ik} \in \mathbb{R}^n, \quad k \in K,\, i \in D_k \\
\end{aligned}
\label{eq:our-HR}
\end{equation}
\endgroup

The proposed exact reformulation provides two major advantages over \(\varepsilon\)-approximation approaches.
First, because it is exact, it avoids the feasible-region enlargement introduced by the $\varepsilon$-approximation and can therefore strengthen the relaxation and the resulting lower bounds.
Additionally, it preserves the quadratic structure of the problem, potentially enhancing computational efficiency and eliminating numerical instabilities associated with the \(\varepsilon\)-approximation.
This structural preservation enables the use of advancements in Mixed-Integer Quadratically Constrained Programming (MIQCP) solvers when the original problem is quadratically constrained, whereas the \(\varepsilon\)-approximation alters the problem structure.

However, the key drawback of this approach arises when this reformulation is applied to \emph{convex} quadratic constraints. 
Although the resulting feasible region of the continuous relaxation is identical to that of the standard exact HR and is therefore convex, the inequality
\[
\mathbf{v}^\top Q \mathbf{v} + (\mathbf{c}^\top \mathbf{v})\,y + d\,y^{2} \le 0
\]
is not necessarily convex as a function of $(\mathbf{v},y)$. 
As a result, solvers may classify these constraints as non-convex, which can significantly degrade performance compared to formulations that preserve explicit convexity, including the $\varepsilon$-approximation. 
We address this issue in the next section by re-deriving CEHR for the convex quadratic case.

Additionally, we note that this approach can be further generalized to polynomial constraints of arbitrary degree, enabling the construction of a HR in which the resulting expressions retain the same polynomial degree (Appendix~\ref{app:poly-hull}).
Alternatively, polynomial constraints can be reformulated into an equivalent problem with quadratic constraints using established techniques \cite{kariaAssessmentTwostepApproach2022}, to which the proposed GEHR for quadratic constraints can be applied.
Thus, using either strategy, the proposed GEHR approach can be extended to polynomial constraints of any degree.

\subsection{Conic Exact Hull Reformulation of Convex Quadratically Constrained GDP}
\label{sec:conic-gdp}
Convex GDP usually refers to a problem in which the objective function and all constraints in the original GDP formulation are convex.
Specifically, for quadratically constrained GDPs, convexity implies that the matrices \(Q\)  associated with quadratic constraints are positive semi-definite (\(Q \succeq 0 \)). 

It has been shown that HR of GDPs with second-order cone constraints admits an exact second-order cone representation \cite{bernalneiraConvexMixedintegerNonlinear2024}.
However, to the best of our knowledge, the literature does not provide an explicit, solver-friendly algebraic formulation for HR of convex quadratic disjunctive constraints that avoids the perspective division and preserves a conic-representable structure.
Moreover, this conic reformulation has not been integrated into commonly used GDP modeling frameworks such as \texttt{Pyomo.GDP}~\cite{chenPyomoGDPEcosystemLogic2022}.
Thus, this subsection focuses on GDP disjunctions whose active constraints are \emph{convex} quadratic
inequalities.

Consider the following disjunctive constraint:
\begin{equation}
\label{eq:convex-q-gdp}
    h(\mathbf x)
    \;:=\;
    \mathbf x^{\mathsf T}Q\mathbf x
    + \mathbf c^{\mathsf T}\mathbf x
    + d
    \;\le\;0,
    \qquad
    Q\succeq 0 .
\end{equation}
As it was mentioned before, the standard HR can be produced by the following constraint
\begin{equation}
\label{eq:convex-perspective}
    \bigl(\operatorname{cl}\widetilde h\bigr)(\mathbf v,y)
    \;=\;
    \begin{cases}
        \dfrac{\mathbf v^{\mathsf T}Q\mathbf v}{y}
        +\mathbf c^{\mathsf T}\mathbf v
        +dy, & y>0,\\[6pt]
        0, & y=0,
    \end{cases}
    \qquad \le 0,
\end{equation}
together with the standard disaggregation bounds
$\mathbf x^{\ell}y\le \mathbf v\le \mathbf x^u y$.

For convex quadratics, however, one can derive an \emph{exact} algebraic formulation that avoids division and preserves quadratic structure, and is representable with a conic constraint.

\subsubsection{ Conic Exact Hull (without factorization)}
The closure of the perspective function constraint \eqref{eq:convex-perspective} can be written (for $y>0$) as
\begin{equation}
\label{eq:persp-ypos}
    \frac{\mathbf v^{\mathsf T}Q\mathbf v}{y}
    + \mathbf c^{\mathsf T}\mathbf v
    + d\,y
    \;\le\; 0.
\end{equation}
A convenient way to avoid the division is to separate the quadratic-over-$y$ term from the affine terms
via an epigraph variable. Introduce a scalar $t$ and impose
\begin{equation}
\label{eq:epi-split}
    \frac{\mathbf v^{\mathsf T}Q\mathbf v}{y} \;\le\; t,
    \qquad
    t + \mathbf c^{\mathsf T}\mathbf v + d\,y \;\le\; 0.
\end{equation}
Because $Q\succeq 0$ implies $\mathbf v^{\mathsf T}Q\mathbf v \ge 0$, we may additionally enforce $t\ge 0$.

For $y>0$, the first inequality in \eqref{eq:epi-split} is equivalent to
\begin{equation}
\label{eq:rsoc-core}
    \mathbf v^{\mathsf T}Q\mathbf v \;\le\; t\,y,
    \qquad t\ge 0,\; y>0,
\end{equation}
which is a (rotated) conic inequality. The second inequality in \eqref{eq:epi-split} is linear:
\begin{equation}
\label{eq:affine-part}
    t + \mathbf c^{\mathsf T}\mathbf v + d\,y \;\le\; 0.
\end{equation}

At $y=0$, the disaggregation bounds enforce $\mathbf v=\mathbf 0$, so \eqref{eq:rsoc-core} holds trivially.
Moreover, \eqref{eq:affine-part} together with $t\ge 0$ forces $t=0$, matching the closure value
$\bigl(\operatorname{cl}\widetilde h\bigr)(\mathbf 0,0)=0$.
Therefore, the projection of the system
\eqref{eq:rsoc-core}--\eqref{eq:affine-part} (with $t\ge 0$ and the standard disaggregation bounds)
onto $(\mathbf v,y)$ coincides exactly with the closure of the perspective function constraint \eqref{eq:convex-perspective}.
In particular, this yields an exact reformulation that avoids the $\varepsilon$-approximation and removes the fractional term.

The validity of this reformulation is formally shown in the following proposition:
\begin{proposition}[Conic epigraph reformulation of HR]
\label{prop:conic-epi-equiv}
Let $Q\succeq 0$ and define
\begin{equation}
\label{eq:conic-set-S1}
S_{1}
=
\Bigl\{(\mathbf v,y)\in\mathbb R^{n}\times\mathbb R \ \Bigm|\ 
y\in[0,1],\ 
\bigl(\operatorname{cl}\widetilde h\bigr)(\mathbf v,y)\le 0,\ 
\mathbf x^{\ell}y\le \mathbf v\le \mathbf x^{u}y
\Bigr\},
\end{equation}
where $\bigl(\operatorname{cl}\widetilde h\bigr)$ is given by \eqref{eq:convex-perspective}. Define also
\begin{equation}
\label{eq:conic-set-S2}
\begin{aligned}
S_{2}^{\mathrm{conic}}
=
\Bigl\{(\mathbf v,y)\in\mathbb R^{n}\times\mathbb R \;\Bigm|\;&
y\in[0,1],\ \exists\, t\in\mathbb R,\ 
\mathbf v^{\mathsf T}Q\mathbf v \le t\,y,\\
& t + \mathbf c^{\mathsf T}\mathbf v + d\,y \le 0,\ 
t\ge 0,\ 
\mathbf x^{\ell}y\le \mathbf v\le \mathbf x^{u}y
\Bigr\}.
\end{aligned}
\end{equation}
Then $S_{1} = S_{2}^{\mathrm{conic}}$.
\end{proposition}

\begin{proof}
We prove this by mutual containment of the sets.

First, take $(\mathbf v,y)\in S_{1}$.
If $y=0$, the bounds $\mathbf x^{\ell}y\le \mathbf v\le \mathbf x^{u}y$ imply $\mathbf v=\mathbf 0$.
Then the choice $t=0$ satisfies $\mathbf v^{\mathsf T}Q\mathbf v = 0 \le t\,y=0$ and
$t+\mathbf c^{\mathsf T}\mathbf v + d\,y = 0$. Moreover, for any feasible $t$ we must have
$t+\mathbf c^{\mathsf T}\mathbf 0 + d\cdot 0 \le 0$ and $t\ge 0$, hence $t=0$ is forced.
Therefore $(\mathbf v,y)\in S_{2}^{\mathrm{conic}}$.
If $y>0$, define $t := \frac{\mathbf v^{\mathsf T}Q\mathbf v}{y}\ge 0$ (since $Q\succeq 0$).
Then $\mathbf v^{\mathsf T}Q\mathbf v \le t\,y$ holds at equality, and
$\bigl(\operatorname{cl}\widetilde h\bigr)(\mathbf v,y)\le 0$ implies
$t+\mathbf c^{\mathsf T}\mathbf v + d\,y \le 0$. Hence $(\mathbf v,y)\in S_{2}^{\mathrm{conic}}$.

Second, take $(\mathbf v,y)\in S_{2}^{\mathrm{conic}}$ with some $t\ge 0$ satisfying \eqref{eq:conic-set-S2}.
If $y=0$, the bounds imply $\mathbf v=\mathbf 0$, and then
$\bigl(\operatorname{cl}\widetilde h\bigr)(\mathbf 0,0)=0\le 0$, so $(\mathbf v,y)\in S_{1}$.
If $y>0$, the inequality $\mathbf v^{\mathsf T}Q\mathbf v \le t\,y$ implies
$\frac{\mathbf v^{\mathsf T}Q\mathbf v}{y}\le t$, and combining with
$t + \mathbf c^{\mathsf T}\mathbf v + d\,y \le 0$ yields
$\frac{\mathbf v^{\mathsf T}Q\mathbf v}{y} + \mathbf c^{\mathsf T}\mathbf v + d\,y \le 0$,
which is exactly $\bigl(\operatorname{cl}\widetilde h\bigr)(\mathbf v,y)\le 0$ for $y>0$.
Therefore $(\mathbf v,y)\in S_{1}$.

Thus $S_{1} = S_{2}^{\mathrm{conic}}$.
\end{proof}

Putting indices back and applying this construction to every convex quadratic constraint in each disjunct, the Conic Exact Hull Reformulation (CEHR) of the quadratically constrained disjunctions \eqref{eq:Q-GDP} is:
\begingroup
\begin{equation}
\label{eq:convex-conic-hull-nochol}
\begin{aligned}
    &\mathbf x = \sum_{i\in D_k}\mathbf v_{ik}, && k\in K,\\
    &\mathbf x^{\ell}y_{ik}\le \mathbf v_{ik}\le \mathbf x^{u}y_{ik},
        && k\in K,\; i\in D_k,\\
    &\mathbf v_{ik}^{\mathsf T}Q_{jik}\mathbf v_{ik} \le t_{jik}\,y_{ik},
        && j\in J_{ik},\; k\in K,\; i\in D_k,\\
    &t_{jik} + \mathbf c_{jik}^{\mathsf T}\mathbf v_{ik} + d_{jik}\,y_{ik} \le 0,
        && j\in J_{ik},\; k\in K,\; i\in D_k,\\
    &t_{jik}\ge 0,
        && j\in J_{ik},\; k\in K,\; i\in D_k,
\end{aligned}
\qquad \text{(CEHR)}
\end{equation}
\endgroup
This formulation is exact for convex quadratic disjunctive constraints ($Q_{jik}\succeq 0$) and avoids the division by $y_{ik}$ present in the perspective form \eqref{eq:convex-perspective}.
Moreover, the only nonlinearities appear in the quadratic-over-product constraints $\mathbf v_{ik}^{\mathsf T}Q_{jik}\mathbf v_{ik} \le t_{jik}y_{ik}$, which are rotated second-order cone representable.
As a result, the HR can be handled by conic-capable solvers without explicitly factoring
$Q_{jik}$ at the modeling layer.

\subsubsection{Explicit Rotated Second Order Cone form via factorization}

In our main implementation and experiments, we keep CEHR in the algebraic form shown in \eqref{eq:convex-conic-hull-nochol}.
Modern conic-capable solvers (notably \texttt{Gurobi} and \texttt{SCIP}) can often recognize a rotated second-order cone (RSOC) structure in \eqref{eq:rsoc-core} during presolve, so an explicit factorization of $Q$ is not required according to our observations.
Additional information on the performance of different algebraic representations of conic constraints is provided in Appendix~\ref{app:conic-repr}.

Although we do not use the explicit conic form in our main computational study, we present it here for
completeness. Since $Q\succeq 0$, there exists a matrix $L$ such that
\[
    Q = L L^{\mathsf T}.
\]
For example, when $Q\succ 0$, $L$ can be obtained via a Cholesky factorization.
When $Q$ is positive semidefinite (possibly singular), one option is a spectral factorization using an eigen-decomposition $Q = U\Lambda U^{\mathsf T}$ with $\Lambda\succeq 0$, yielding $L := U\Lambda^{1/2}$.

Then
\[
    \mathbf v^{\mathsf T}Q\mathbf v
    = \mathbf v^{\mathsf T} L L^{\mathsf T}\mathbf v
    = \|L^{\mathsf T}\mathbf v\|_2^2,
\]
and \eqref{eq:rsoc-core} is equivalent to
\[
    \|L^{\mathsf T}\mathbf v\|_2^2 \le t\,y,
    \qquad t\ge 0,\; y\ge 0.
\]
This is a rotated second-order cone representable set via
\[
    \left(\frac{t}{2},\, y,\, L^{\mathsf T}\mathbf v\right)\in\mathcal Q_r
    \;:=\;\left\{(\alpha,\beta,\mathbf w)\ \middle|\ 2\alpha\beta \ge \|\mathbf w\|_2^2,\ \alpha\ge 0,\ \beta\ge 0\right\}.
\]

This yields an explicit mixed-integer second-order cone (MISOCP) formulation of HR for convex quadratic disjunctive constraints, re-deriving the known result that hull reformulation of second-order cone constraints admit second-order cone representations \cite{bernalneiraConvexMixedintegerNonlinear2024}.

\subsubsection{Extension to general second-order cone representable constraints}
\label{sec:conic-extension-soc}

It has been shown in the literature that HR remains representable in the same cone families as those used within the original disjunctions \cite{bernalneiraConvexMixedintegerNonlinear2024}.
Our convex quadratic case fits naturally within this framework.
Although a generic convex quadratic inequality
\[
\mathbf x^{\mathsf T}Q\mathbf x+\mathbf c^{\mathsf T}\mathbf x+d\le 0,
\qquad Q\succeq 0,
\]
is not itself written as a cone membership constraint; it is second-order cone representable after introducing lifting.

In this paper, we focus on the conic reformulation of convex quadratic disjunctive constraints.
However, the similar reformulation principle extends to any disjunctive constraint that admits an SOC representation, either in the original variables or after lifting.
Thus, while our computational study for CEHR is restricted to convex quadratic constraints, the approach applies more broadly to SOC-representable disjunctive constraints, possibly after introducing auxiliary variables. 
The presented CEHR for convex functions can therefore be viewed as a special case of a more general result for conic-representable disjunctive constraints.

\section{Computational experiments}
\label{sec:experiments}

The primary objective of the computational experiments presented in this paper is to evaluate the performance of the proposed GEHR and CEHR (for convex problems) in comparison to \(\varepsilon\)-approximation method, which is usually used for any nonlinear constraints in HR.
Although the theoretical foundations of HR have been developed for convex GDPs~\cite{ceriaConvexProgrammingDisjunctive1999,grossmannGeneralizedConvexDisjunctive2003}, and most existing literature concentrates on convex GDP formulations~\cite{bernalneiraConvexMixedintegerNonlinear2024, grossmannGeneralizedConvexDisjunctive2003, grossmannGeneralizedDisjunctiveProgramming2012}, as highlighted above, HR is a valid approach to apply to both convex and non-convex problems.
Consequently, this study investigates both convex and non-convex GDPs to provide a comprehensive comparison.

The existing literature on non-convex GDPs proposes convexifying the constraints within each disjunct prior to applying techniques developed for convex GDPs, such as HR or Big-M, to obtain a convex MINLP~\cite{ruizGlobalOptimizationNonconvex2017, trespalaciosCuttingPlanesImproved2016}.
A comparison between this approach and the direct application of HR or Big-M reformulation to non-convex MINLPs represents a compelling direction for future research, although it lies beyond the scope of this paper.

Similarly, a detailed assessment of the relative advantages and disadvantages of alternative reformulation strategies, such as Big-M or Binary Multiplication, compared to HR, particularly in the context of nonconvex problems, is deferred to future studies. Performance results for the non-hull reformulations (Big-M and Binary Multiplication) are reported as obtained directly from \texttt{Pyomo.GDP} (i.e., without Big-M parameter tuning)~\cite{chenPyomoGDPEcosystemLogic2022}. These results are included solely to provide a broader context.
We emphasize that a comprehensive comparison of the proposed hull reformulations with other reformulation methods is not the objective of this work.

In this section, we present results from computational experiments on various quadratically constrained GDPs, solved using state-of-the-art general MINLP solvers after applying different reformulation approaches.
All test problems were initially formulated as GDP and reformulated to MINLP using \texttt{Pyomo} (version 6.9.2) \cite{bynumPyomoOptimizationModeling2021}. 
As part of this work, we implemented GEHR and CEHR for quadratic constraints within \texttt{Pyomo.GDP}~\cite{chenPyomoGDPEcosystemLogic2022} by extending its existing  HR framework. 
All computational experiments in the main part of the paper used the default \texttt{Pyomo} setting of  \(\varepsilon = 10^{-4}\) for  \(\varepsilon\)-approximation approach, since the values of \(\varepsilon \) were found not to influence results qualitatively. For details on the effect of different values of \(\varepsilon\), refer to Appendix \ref{app:epsilon}.

The resulting MINLP formulations were subsequently solved in \texttt{GAMS} (version 52.2.0) \cite{GAMS} using \texttt{Gurobi} (version 13.0) \cite{GurobiOptimizationLLC}, \texttt{BARON} (version 25.11.17.) \cite{OptimizationFirmBARON}, and \texttt{SCIP} (version 9.2.4) \cite{BolusaniEtal2024OO}.

All problems were solved with a wall-clock time limit of 5 minutes (300 seconds).

To reduce the influence of solver stopping criteria on the reported outcomes across solvers, we attempted to align key tolerance parameters as closely as possible, including feasibility tolerances, integrality tolerances, and optimality gap tolerances.
In our implementation, we used a common target set of tolerances (relative MIP gap $10^{-6}$, absolute MIP gap $10^{-10}$, feasibility tolerance $10^{-6}$, optimality tolerance $10^{-6}$, and integrality tolerance $10^{-5}$) and mapped them to the closest available options in each solver interface.

Nevertheless, exact matching of numerical settings across \texttt{Gurobi}, \texttt{SCIP}, and \texttt{BARON} is not always possible because solvers expose different parameterizations and may interpret tolerances differently. Importantly, this work is not intended to compare solvers against each other; rather, it compares reformulation strategies within each solver. Consequently, the reported data should not be used to draw conclusions about the relative performance of the solvers themselves.

All experiments were run single-threaded on an AMD EPYC 7643 CPU with 1 TB RAM, under Red Hat Enterprise Linux 9.5. Solver parallelism was disabled by setting the solver thread count to 1.

The following subsections summarize the main findings from computational experiments on different quadratically constrained GDP problems.
These experiments compare reformulation strategies using different solvers.
Detailed numerical results and problem generation scripts are available online.
\footnote{Available at: \url{https://github.com/SECQUOIA/exact_quadratic_hull}.}

\subsection{Random Instances (convex and non-convex GDPs)}

To comprehensively evaluate the effectiveness of the proposed Hull Reformulations, we conducted computational experiments on randomly generated instances of quadratically constrained GDPs with quadratic disjunctive constraints, no global constraints, and a quadratic objective function.
These problems featured dense matrices and vectors of coefficients, meaning all coefficients were nonzero.
Coefficients were randomly generated from a uniform distribution over the interval \( [-1,1]\), and all continuous variables were bounded by the interval \( [-1,1]\).
For the convex problem instances, eigenvalues of the generated random matrices \(Q\) were adjusted to be non-negative to ensure positive semidefiniteness of \(Q\), thereby guaranteeing convexity of the problem.

Recall that in the formulation presented in Equation~\eqref{eq:Q-GDP}, $D_k$ is a set of disjuncts in a disjunction $k$, $J_{ik}$ is a set of constraints in a disjunct $ik$, $n$ is the number of dimensions of variable $\mathbf{x}$.

The problem generation code allowed for systematic variation in parameters such as the number of dimensions \(n\), disjunctions \(|K|\), disjuncts per disjunction \(|D_k|\), constraints per disjunct \(|J_{ik}|\), and the number of explicitly guaranteed feasible points.
In the experiments carried out, each instance included 10 randomly generated points explicitly ensured to be feasible by adjusting the constant \(d_{jik}\) in the constraints.

A total of 240 convex instances were generated with parameters in the following ranges:
\(
3 \leq |K| \leq 10, \; 10 \leq |D_k| \leq 15, \; |J_{ik}| = 10, \; 3 \leq n \leq 7
\) and 100 non-convex instances with parameters in the following ranges: \(
3 \leq |K| \leq 10, \; 10 \leq |D_k| \leq 15, \; |J_{ik}| = 10, \; 3 \leq n \leq 9
\).
Generated GDP models were saved to enable execution with different reformulations and solvers.

Figures~\ref{fig:conv_random} and~\ref{fig:non-conv_random} report cumulative performance profiles (instances solved versus wall-clock time) for the convex and non-convex problems, respectively.
Table~\ref{tab:solver-performance} summarizes solver outcomes, including timeouts and cases where a solver returned an infeasibility declaration (although the problems were created that ensured that they are feasible) or an ``Objective mismatch'' solution (these are treated as incorrect and excluded from the profile curves).
A solution was classified as ``Objective mismatch'' if the solver, without reaching the time limit, reported an optimal solution with an objective value relatively larger by more than \(10^{-4}\) compared to those obtained by other reformulations, despite the solver’s optimality tolerance being set to \(10^{-6}\). 
The threshold of \(10^{-4}\) was chosen to ensure that such cases represent definitively incorrect solutions rather than results only marginally exceeding the specified tolerance.

Objective mismatch arises from an incorrect (overly large) reported lower bound, which can cause the solver to certify a suboptimal incumbent as optimal.
To illustrate this, Table~\ref{tab:obj-mismatch-all} reports the objective and solver-reported lower bound on one random convex quadratic instance.
As shown in Table~\ref{tab:obj-mismatch-all}, solvers sometimes reported matching lower and upper bounds; however, other reformulations or solvers with the same reformulation reported significantly better solutions.

\begin{figure}[htbp]
  \centering
  \begin{subfigure}[t]{0.32\textwidth}
    \centering
    \includegraphics[width=\linewidth]{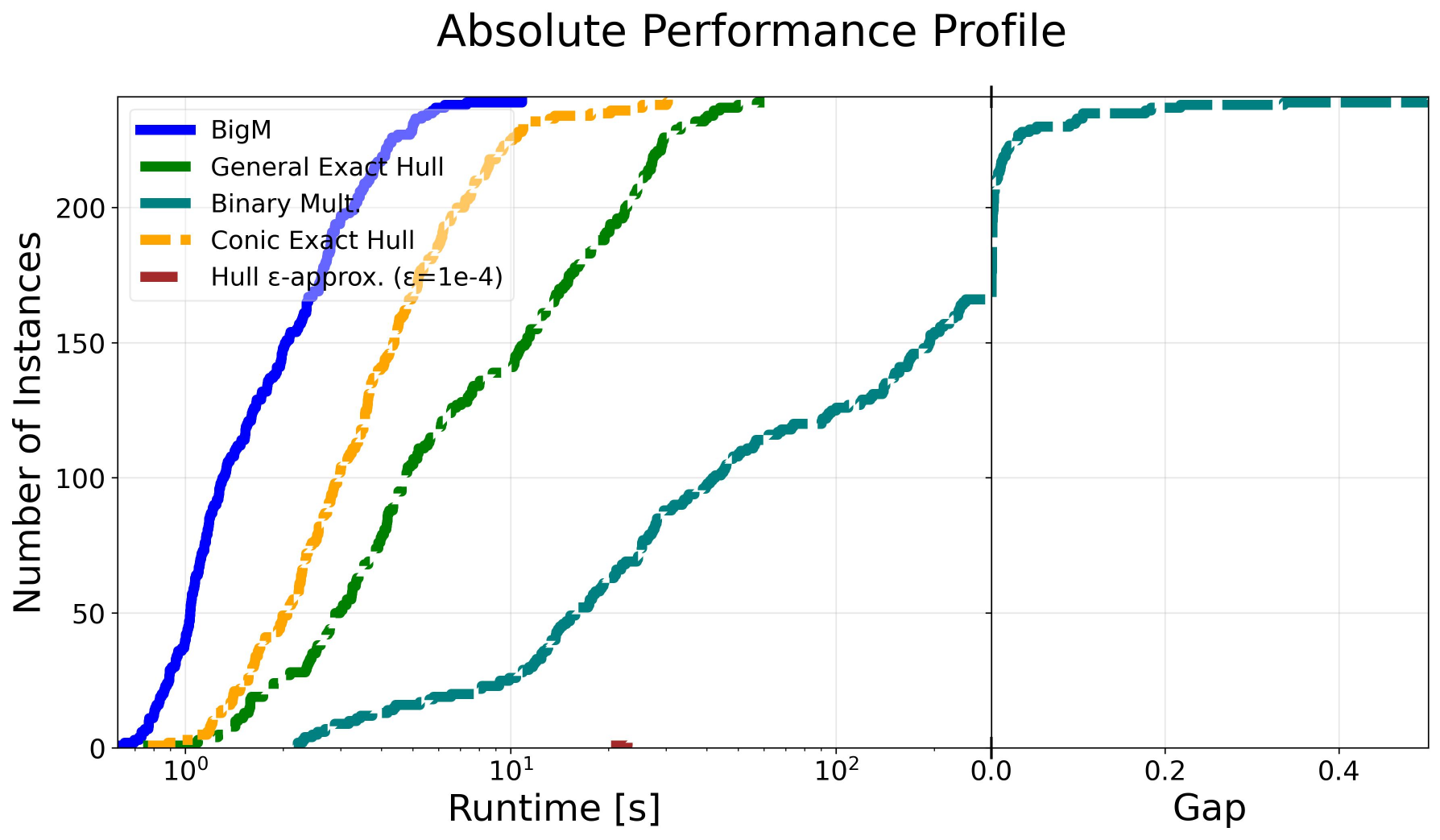}
    \caption{\texttt{Gurobi}}
    \label{fig:conv_random-gurobi}
  \end{subfigure}%
  \hspace{0.01\textwidth}
  \begin{subfigure}[t]{0.32\textwidth}
    \centering
    \includegraphics[width=\linewidth]{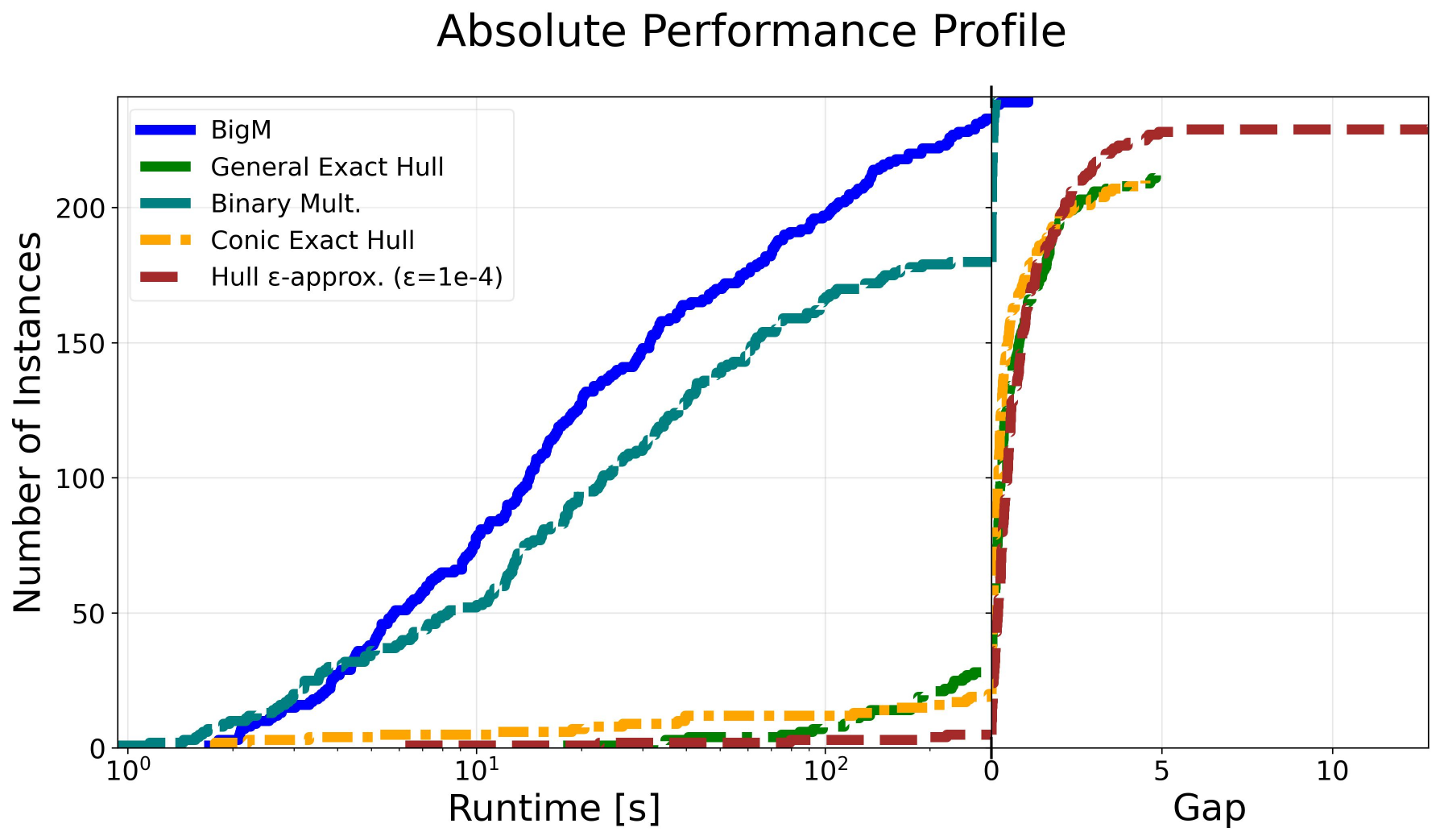}
    \caption{\texttt{BARON}}
    \label{fig:conv_random-baron}
  \end{subfigure}%
  \hspace{0.01\textwidth}
  \begin{subfigure}[t]{0.32\textwidth}
    \centering
    \includegraphics[width=\linewidth]{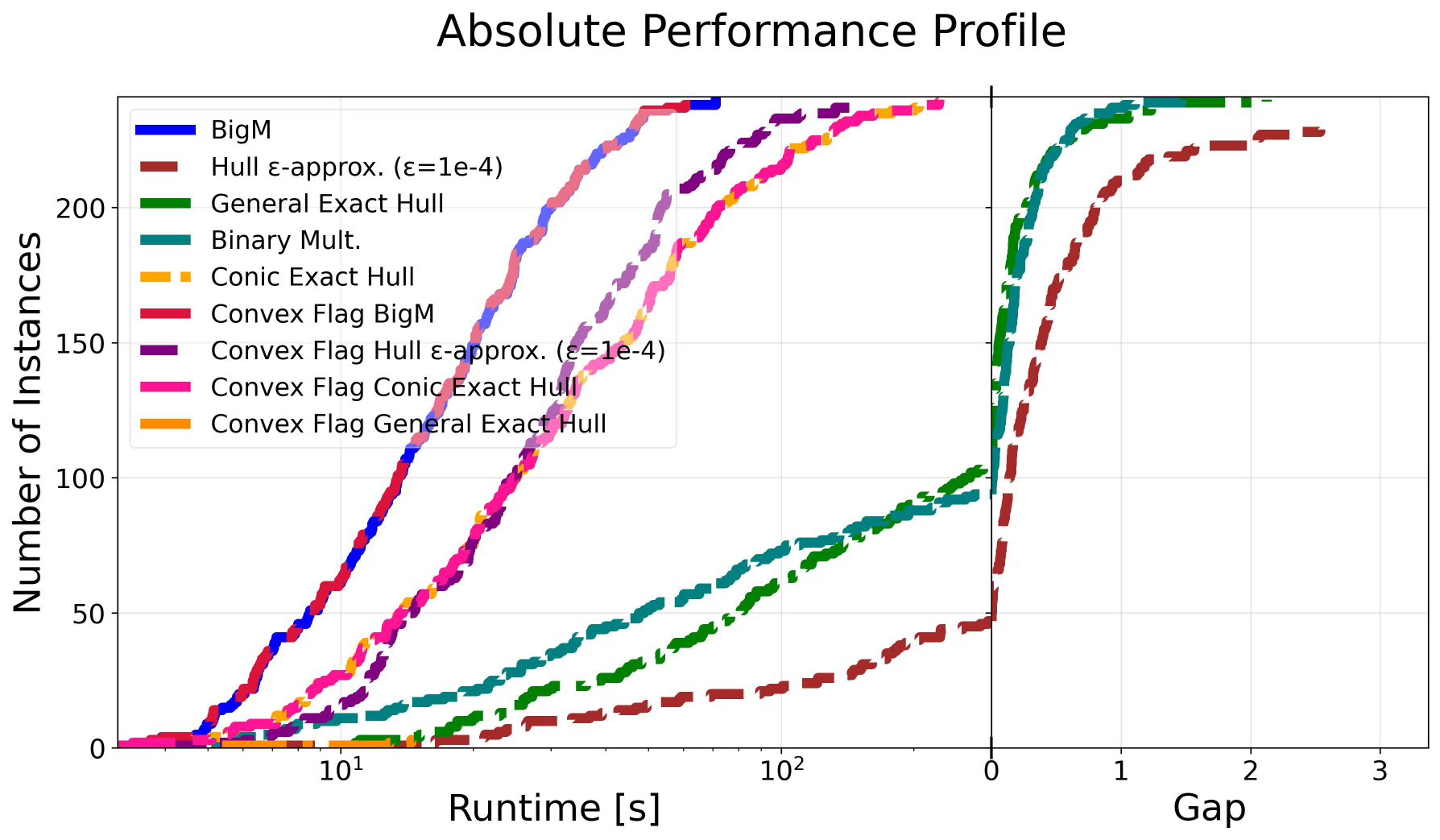}
    \caption{\texttt{SCIP}}
    \label{fig:conv_random-scip}
  \end{subfigure}
  \caption{Cumulative number of instances solved versus solution time for different reformulations applied to random convex quadratically constrained GDP problems. Results are shown separately for \texttt{Gurobi}, \texttt{BARON}, and \texttt{SCIP}. Each curve represents a distinct reformulation strategy.}
  \label{fig:conv_random}
\end{figure}

\begin{figure}[htbp]
  \centering
  \begin{subfigure}[t]{0.32\textwidth}
    \centering
    \includegraphics[width=\linewidth]{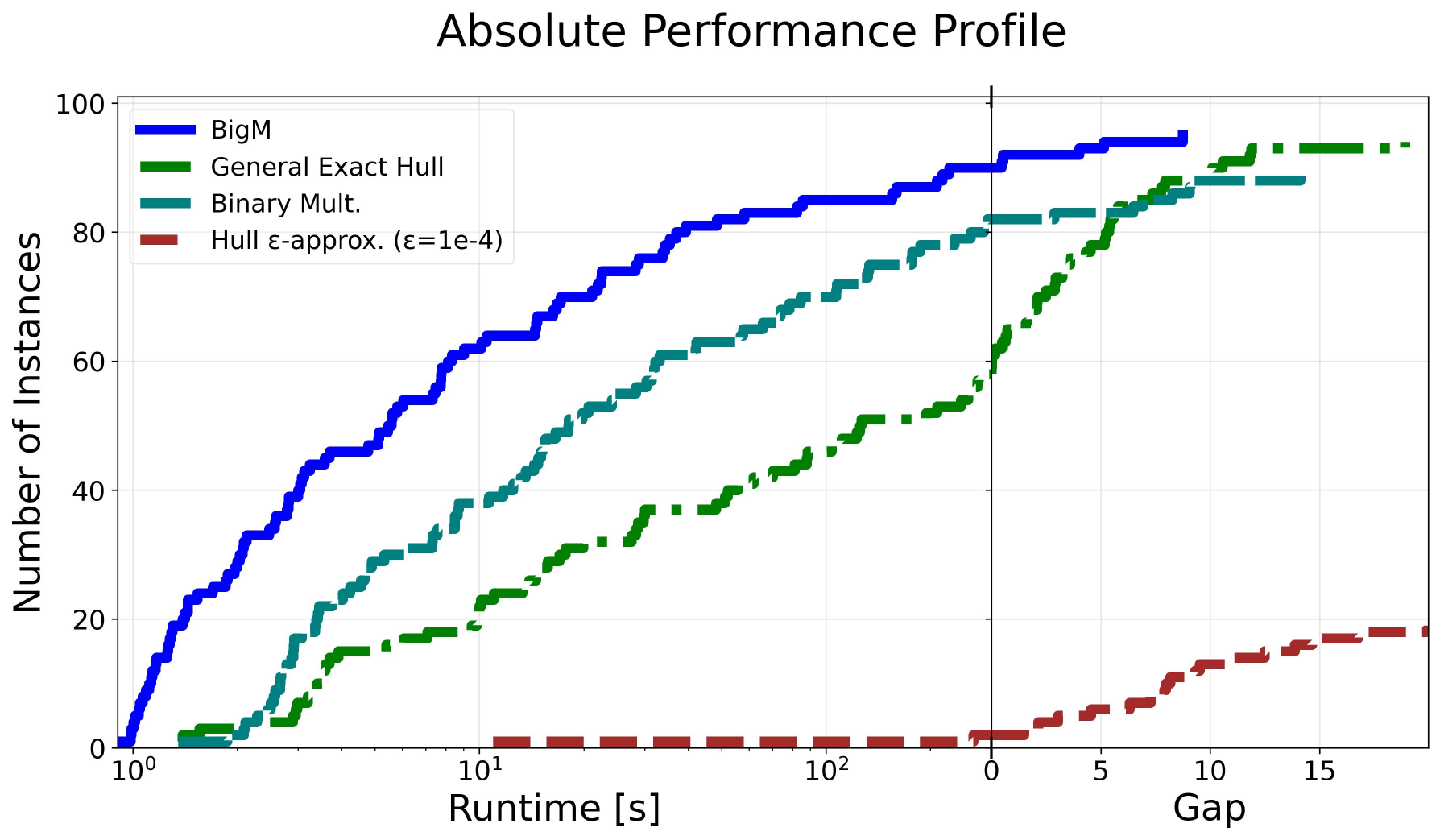}
    \caption{\texttt{Gurobi}}
    \label{fig:nonconv-gurobi}
  \end{subfigure}%
  \hspace{0.01\textwidth}
  \begin{subfigure}[t]{0.32\textwidth}
    \centering
    \includegraphics[width=\linewidth]{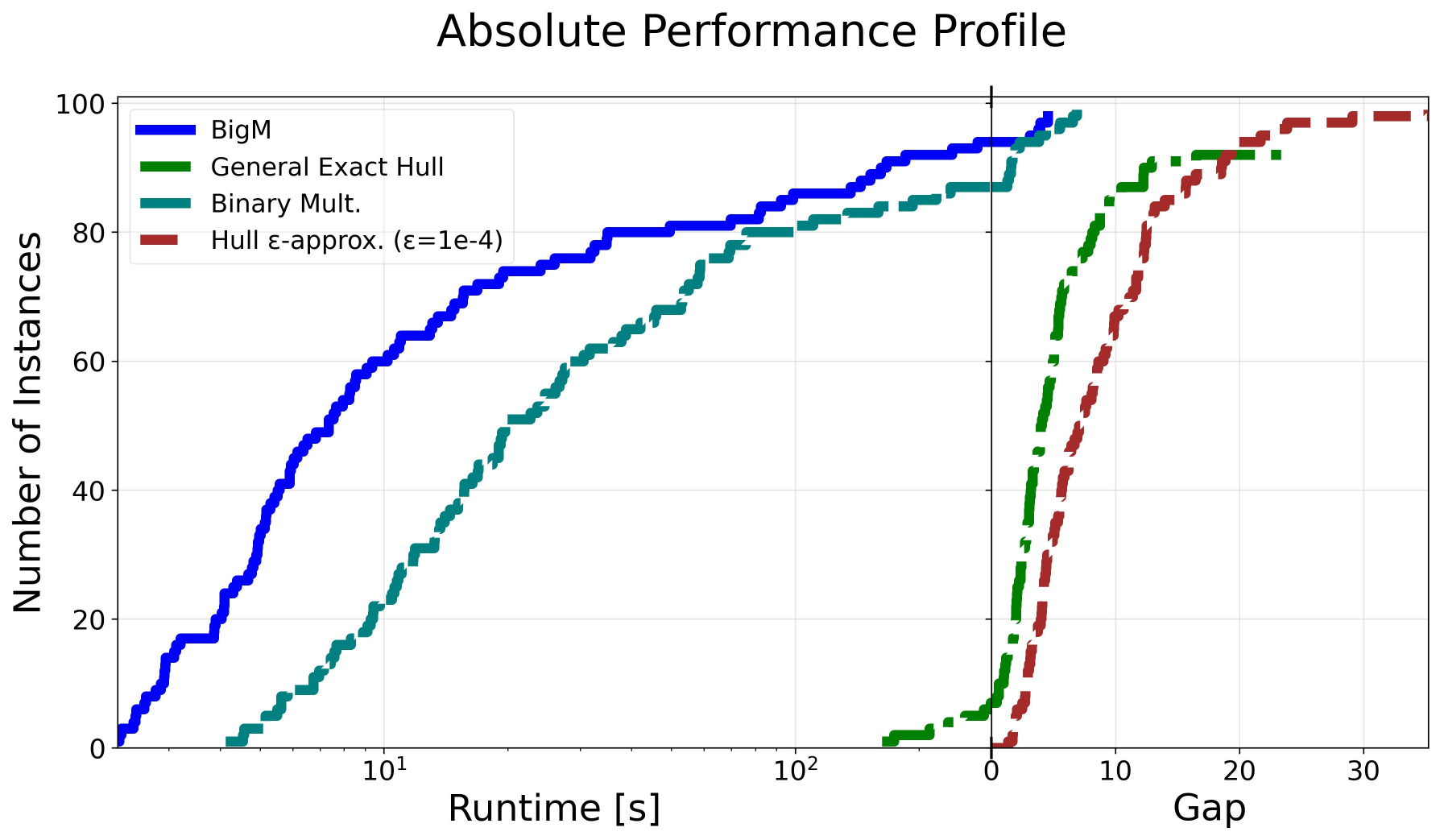}
    \caption{\texttt{BARON}}
    \label{fig:nonconv-baron}
  \end{subfigure}%
  \hspace{0.01\textwidth}
  \begin{subfigure}[t]{0.32\textwidth}
    \centering
    \includegraphics[width=\linewidth]{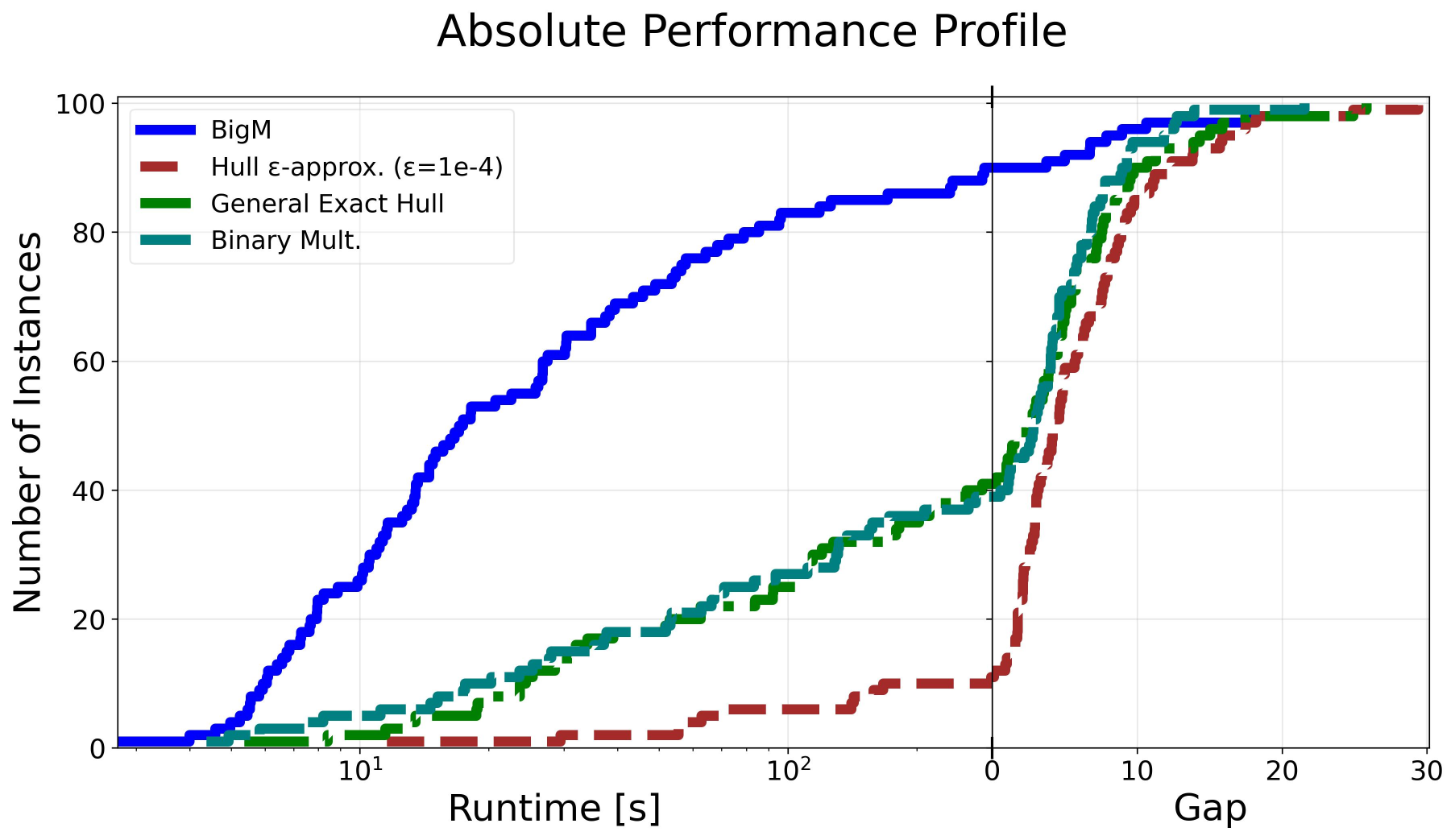}
    \caption{\texttt{SCIP}}
    \label{fig:nonconv-scip}
  \end{subfigure}
  \caption{Cumulative number of instances solved versus solution time for different reformulations applied to random non-convex quadratically constrained GDP problems. Results are shown separately for \texttt{Gurobi}, \texttt{BARON}, and \texttt{SCIP}. Each curve represents a distinct reformulation strategy.}
  \label{fig:non-conv_random}
\end{figure}


\begin{table}[t]
\centering
\caption{Solver outcomes across reformulation strategies in convex and non-convex cases for random quadratic GDPs}
\label{tab:solver-performance}

\begin{subtable}[t]{0.49\textwidth}
\centering
\footnotesize
\caption{Convex case}
\label{tab:outcomes-convex}
\resizebox{\linewidth}{!}{%
\begin{tabular}{lccccc}
\toprule
\textbf{Strategy} & \textbf{Optimal} & \textbf{Timeout} & \textbf{Infeasible} & \textbf{\shortstack{Objective \\ mismatch}} & \textbf{Total} \\
\midrule
\multicolumn{6}{l}{\textbf{Solver: \texttt{Gurobi}}} \\
BigM & 240 & 0   & 0   & 0  & 240 \\
Hull $\varepsilon$-approx.\ ($\varepsilon=10^{-4}$) & 2   & 0   & 200 & 38 & 240 \\
General Exact Hull & 240 & 0   & 0   & 0  & 240 \\
CEHR & 240 & 0   & 0   & 0  & 240 \\
Binary Mult. & 167 & 73  & 0   & 0  & 240 \\
\midrule
\multicolumn{6}{l}{\textbf{Solver: \texttt{BARON}}} \\
BigM & 233 & 7   & 0   & 0  & 240 \\
Hull $\varepsilon$-approx.\ ($\varepsilon=10^{-4}$) & 5   & 233 & 0   & 2  & 240 \\
GEHR & 29  & 183 & 0   & 28 & 240 \\
CEHR & 20  & 190 & 0   & 30 & 240 \\
Binary Mult. & 180 & 60  & 0   & 0  & 240 \\
\midrule
\multicolumn{6}{l}{\textbf{Solver: \texttt{SCIP}}} \\
BigM & 240 & 0   & 0   & 0  & 240 \\
Hull $\varepsilon$-approx.\ ($\varepsilon=10^{-4}$) & 47  & 182 & 0   & 11 & 240 \\
GEHR & 104 & 136 & 0   & 0  & 240 \\
CEHR & 240 & 0   & 0   & 0  & 240 \\
Binary Mult. & 94  & 146 & 0   & 0  & 240 \\
\midrule
\multicolumn{6}{l}{\textbf{Solver: \texttt{SCIP} (convex flag)}} \\
BigM & 240 & 0 & 0   & 0  & 240 \\
Hull $\varepsilon$-approx.\ ($\varepsilon=10^{-4}$) & 238 & 0 & 0   & 2  & 240 \\
GEHR & 4   & 0 & 217 & 19 & 240 \\
CEHR & 240 & 0 & 0   & 0  & 240 \\
Binary Mult. & 0   & 0 & 219 & 21 & 240 \\
\bottomrule
\end{tabular}}
\end{subtable}
\hfill
\begin{subtable}[t]{0.49\textwidth}
\centering
\footnotesize
\caption{Non-convex case}
\label{tab:outcomes-nonconvex}
\resizebox{\linewidth}{!}{%
\begin{tabular}{lccccc}
\toprule
\textbf{Strategy} & \textbf{Optimal} & \textbf{Timeout} & \textbf{Infeasible} & \textbf{\shortstack{Objective \\ mismatch}} & \textbf{Total} \\
\midrule
\multicolumn{6}{l}{\textbf{Solver: \texttt{Gurobi}}} \\
BigM & 91 & 9  & 0 & 0  & 100 \\
Hull $\varepsilon$-approx.\ ($\varepsilon=10^{-4}$) & 2  & 86 & 0 & 12 & 100 \\
GEHR & 59 & 41 & 0 & 0  & 100 \\
Binary Mult. & 83 & 17 & 0 & 0  & 100 \\
\midrule
\multicolumn{6}{l}{\textbf{Solver: \texttt{BARON}}} \\
BigM & 95 & 4  & 0 & 1  & 100 \\
Hull $\varepsilon$-approx.\ ($\varepsilon=10^{-4}$) & 0  & 99 & 1 & 0  & 100 \\
GEHR & 7  & 86 & 0 & 7  & 100 \\
Binary Mult. & 88 & 12 & 0 & 0  & 100 \\
\midrule
\multicolumn{6}{l}{\textbf{Solver: \texttt{SCIP}}} \\
BigM & 91 & 9  & 0 & 0  & 100 \\
Hull $\varepsilon$-approx.\ ($\varepsilon=10^{-4}$) & 11 & 89 & 0 & 0  & 100 \\
GEHR & 41 & 59 & 0 & 0  & 100 \\
Binary Mult. & 39 & 61 & 0 & 0  & 100 \\
\bottomrule
\end{tabular}}
\end{subtable}

\end{table}

\begin{table}[t]
\centering
\caption{Objective value, solver-reported lower bound, and wall-clock time for all reformulations on a random convex quadratic instance with parameters 
\( |K| = 3,\; |D_k| = 10,\; \; n = 4 \). Objective mismatch corresponds to cases where a solver returns \texttt{optimal}, but the reported lower bound is inconsistent with the best objective values obtained by other reformulations on the same instance, indicating an invalid lower bound.}
\label{tab:obj-mismatch-all}
\footnotesize
\setlength{\tabcolsep}{6pt}
\begin{tabular}{lrrr}
\toprule
\textbf{Strategy} & \textbf{Time (s)} & \textbf{Objective} & \textbf{Lower bound} \\
\midrule
\multicolumn{4}{l}{\textbf{Solver: \texttt{Gurobi}}} \\
BigM & 0.7864  & -1.503530467 & -1.503531622 \\
Hull $\varepsilon$-approx.\ ($\varepsilon=10^{-4}$) & 29.6530 & -1.209200146 & -1.209200146 \\
GEHR & 1.2564  & -1.503530467 & -1.503530784 \\
CEHR & 1.3155  & -1.503530107 & -1.503530472 \\
Binary Mult. & 3.3902  & -1.503531523 & -1.503536828 \\
\midrule
\multicolumn{4}{l}{\textbf{Solver: \texttt{BARON}}} \\
BigM & 1.8185  & -1.503530473 & -1.503530415 \\
Hull $\varepsilon$-approx.\ ($\varepsilon=10^{-4}$) & 301.7905 & -1.382069355 & -2.172538773 \\
GEHR & 196.1422 & -1.439272090 & -1.439272090 \\
CEHR & 279.6740 & -1.451448855 & -1.451448859 \\
Binary Mult. & 4.9766  & -1.503530473 & -1.503530490 \\
\midrule
\multicolumn{4}{l}{\textbf{Solver: \texttt{SCIP}}} \\
BigM & 4.9693  & -1.503530165 & -1.503530994 \\
Hull $\varepsilon$-approx.\ ($\varepsilon=10^{-4}$) & 4.7891  & -1.070302464 & -1.070302474 \\
GEHR & 27.2644 & -1.503530585 & -1.503531213 \\
CEHR & 8.4097  & -1.503530345 & -1.503530886 \\
Binary Mult. & 23.7996 & -1.503530282 & -1.503531068 \\
\midrule
\multicolumn{4}{l}{\textbf{Solver: \texttt{SCIP} (convex flag)}} \\
BigM & 4.9459  & -1.503530165 & -1.503530994 \\
Hull $\varepsilon$-approx.\ ($\varepsilon=10^{-4}$) & 4.0870  & -1.503530309 & -1.503530995 \\
GEHR & 0.9600  & n/a & $\infty$ \\
CEHR & 8.3809  & -1.503530345 & -1.503530886 \\
Binary Mult. & 0.8000  & n/a & $\infty$ \\
\bottomrule
\end{tabular}
\end{table}

Failures to produce a correct solution, whether by incorrectly declaring the problem infeasible or by reporting a solution as optimal with an incorrect lower bound, are attributed to numerical instabilities, which can arguably be particularly pronounced when using the  \(\varepsilon\)-approximation.

\subsubsection{Convex instances}

As can be seen from the results on the convex benchmark set presented in Figure~\ref{fig:conv_random} and Table~\ref{tab:outcomes-convex}, CEHR is, in most cases, the fastest and most reliable HR approach across solvers, with the clearest gains for solvers that recognize conic structure (\texttt{Gurobi} and \texttt{SCIP}).
This outcome is consistent with the formulation's design: it preserves the exact hull relaxation while expressing the perspective term via a rotated-cone-representable constraint, enabling the solver to certify and exploit convexity directly.

In \texttt{BARON}, all hull formulations performed relatively poorly overall. Nevertheless, CEHR and GEHR showed similar performance and were notably better than the \(\varepsilon\)-approximation in terms of runtime.
Both exact formulations exhibited an increased number of ``Objective mismatch'' outcomes; however, interpreting numerical robustness for the \(\varepsilon\)-approximation is difficult in \texttt{BARON} because it timed out on 233 of 240 instances in the case of $\varepsilon$-approximation, leaving too few completed runs to support a meaningful comparison.

Finally, GEHR substantially improved solution time and numerical robustness relative to the \(\varepsilon\)-approximation across all solvers, but it was significantly outperformed by CEHR for solvers with conic recognition (\texttt{Gurobi} and \texttt{SCIP}).

To isolate the role of convexity recognition, we additionally tested the convex random instances with \texttt{SCIP} under a setting that forces \texttt{SCIP} to assume that all constraints are mixed-integer convex functions (reported as ``convex flag'' results in Table~\ref{tab:outcomes-convex}).

When \texttt{SCIP} is guided to treat the constraints as mixed-integer convex, the $\varepsilon$-approximation hull behaves much more like CEHR in terms of speed and reliability.
This suggests that only with explicit guidance, the branch-and-bound performance of these two formulations is comparable, and they target essentially the same convex relaxation.
However, without explicit guidance, the solvers tested here did \emph{not} reliably recognize or exploit the convexity of the $\varepsilon$-approximation constraints on these instances, which largely explains its substantially worse performance relative to CEHR.

Additionally, the fact that most solutions were incorrect for GEHR under the convex-flag setting underscores its main limitation.
Although the feasible region of the relaxation is convex, the reformulated inequality is not necessarily convex as a function of the lifted variables.
If the solver is forced to treat these constraints as convex, it may generate invalid cuts and return incorrect solutions. Therefore, declaring GEHR as convex is generally inappropriate, even when the underlying relaxation forms a convex feasible region.

Based on these differences in \texttt{SCIP}, we infer that the performance of the $\varepsilon$-approximation could improve substantially when solvers correctly recognize and exploit the convexity of the resulting constraints. This highlights a key drawback of the $\varepsilon$-approximation: it complicates the functional structure of constraints and can hinder convexity detection, thereby preventing solvers from fully leveraging convex relaxations.

In practice, it is often impractical, and sometimes impossible, to supply annotations for constraints that arise after GDP reformulation, particularly when solvers do not provide the necessary support (especially if only some constraints are convex).
For this reason, we restrict the convexity-annotation experiment to the random convex benchmark set. For the remaining benchmarks, we report results under the default solver behavior, which better reflects typical use cases in which users either do not provide convexity flags or cannot.

\subsubsection{Non-convex instances}
On the non-convex GDPs, CEHR is not applicable, so we focus on comparing GEHR with the $\varepsilon$-approximation. 
As shown in Figures~\ref{fig:non-conv_random} and Table~\ref{tab:outcomes-nonconvex}, overall, GEHR outperforms the $\varepsilon$-approximation, yielding shorter solution times across all solvers.

In terms of numerical robustness, the improvement is most pronounced for \texttt{Gurobi}: GEHR produced no incorrect solutions, whereas the $\varepsilon$-approximation returned 12 incorrect solutions.
For \texttt{BARON}, GEHR exhibited more solver failures, but once again, this comparison is not very informative because the $\varepsilon$-approximation timed out on 99 out of 100 instances, leaving almost no runs completed within the time limit to assess its numerical stability.
For \texttt{SCIP}, no reformulation exhibited numerical issues on this benchmark set.

\subsection{Continuously Stirred Tank Reactor network problem (non-convex GDP)}

Consider the following superstructure optimization problem involving a network of Continuously Stirred Tank Reactors (CSTRs).
The objective is to minimize the total volume of CSTRs in series, in which the autocatalytic reaction \(A + B \rightarrow 2B\) takes place.
The superstructure allows a recycle stream originating from the last reactor to be returned to any preceding reactor within the series.
An illustration of this problem is provided in Figure~\ref{fig:CSTR}.
The details and formulations of the problem were previously described in the literature \cite{ovalleLogicBasedDiscreteSteepestDescent2025} and are summarized in Appendix \ref{app:cstr}.

Originally, the constraints defining this optimization problem were linear or cubic.
Since any polynomial constraint can be systematically represented using quadratic constraints, for the purposes of this work, cubic constraints are reformulated into equivalent quadratic forms, using quadratic equality constraints.
Consequently, due to the presence of quadratic equality constraints, the resulting GDP formulation is inherently non-convex.
Therefore, this problem allows for testing reformulations in an application, where constraints are linear and non-convex quadratic.

\begin{figure}[htbp]
  \centering
  \includegraphics[width=0.8\textwidth]{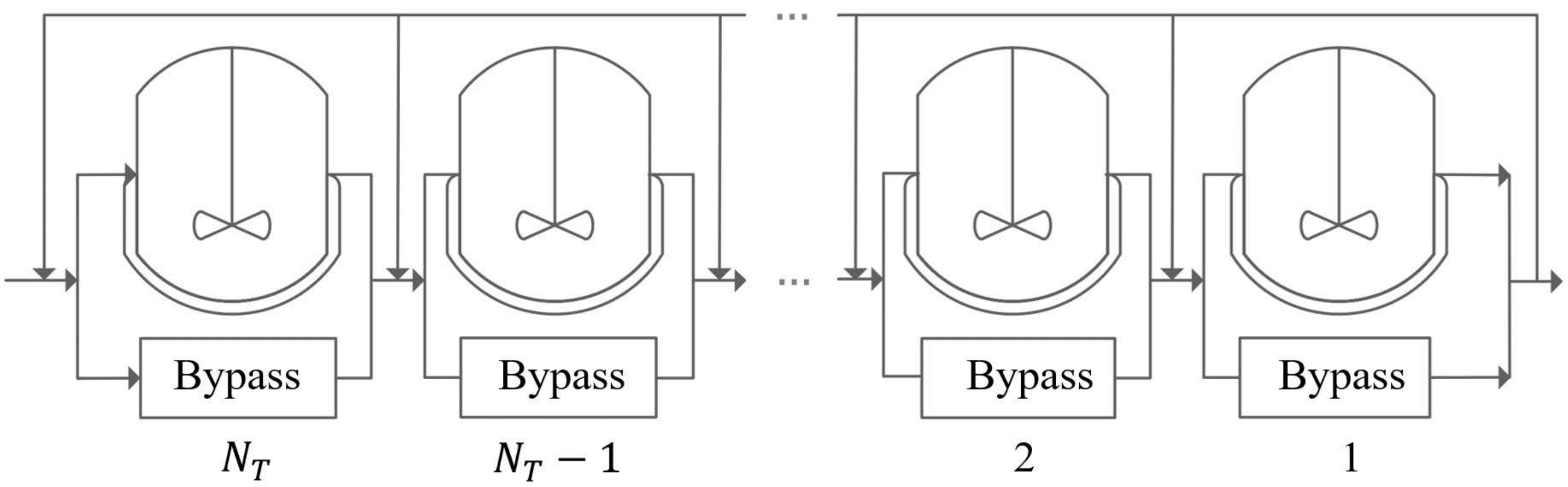}  
  \caption{Superstructure representation for the continuously stirred tank reactor (CSTR) network problem. 
The configuration includes \(N_T\) reactor stages arranged in series, where each stage can either be a reactor or a bypass.
The flow proceeds from left to right, and a recycle stream from the last reactor is allowed to return to any upstream stage.
This flexible structure enables the optimizer to choose the optimal number of reactors and the location of recycle to minimize the total reactor volume \cite{ovalleLogicBasedDiscreteSteepestDescent2025}.}

  \label{fig:CSTR}
\end{figure}

To vary problem size and difficulty, we changed the maximum allowable number of reactor stages while keeping all other model parameters fixed.
Figure~\ref{fig:combined} summarizes the results across reformulations and solvers.
No numerical issues were observed for this benchmark across the tested solvers and reformulations, so we omit a table of solution outcomes.

\begin{figure}[htbp]
  \centering
  \begin{subfigure}[t]{0.32\textwidth}
    \centering
    \includegraphics[width=\linewidth]{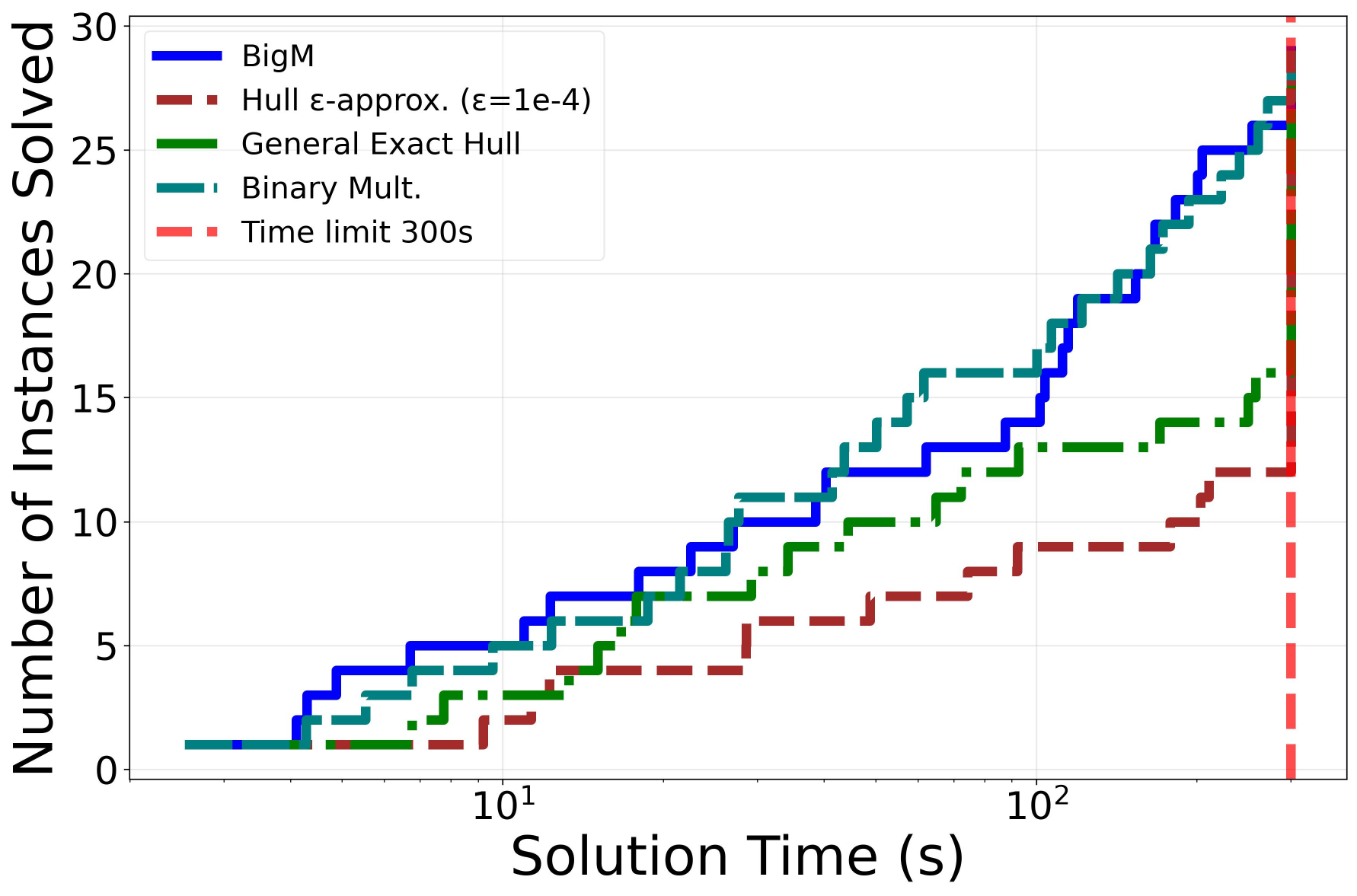}
    \caption{\texttt{Gurobi}}
    \label{fig:cstr-gurobi}
  \end{subfigure}%
  \hspace{0.01\textwidth}
  \begin{subfigure}[t]{0.32\textwidth}
    \centering
    \includegraphics[width=\linewidth]{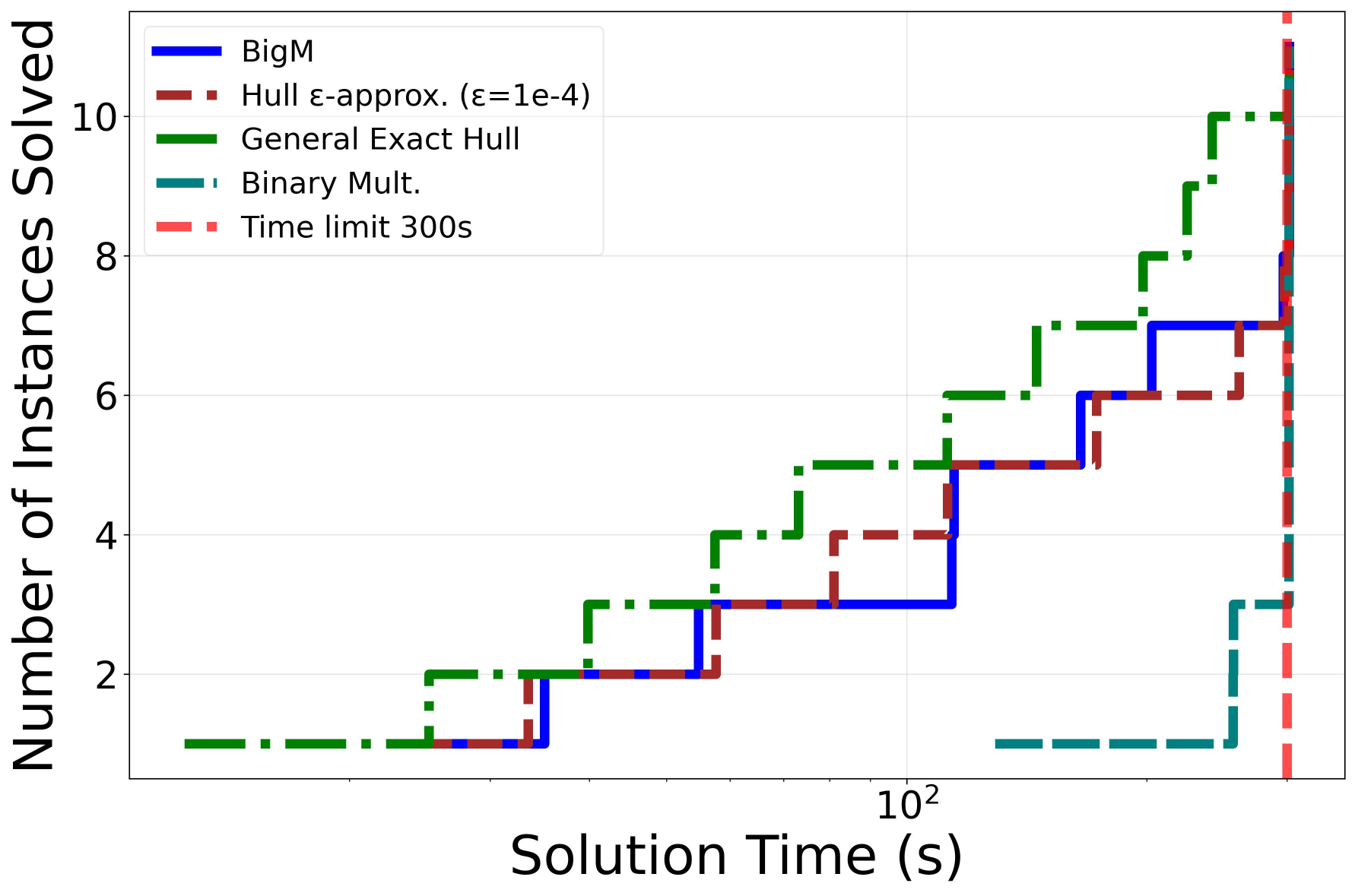}
    \caption{\texttt{BARON}}
    \label{fig:cstr-baron}
  \end{subfigure}%
  \hspace{0.01\textwidth}
  \begin{subfigure}[t]{0.32\textwidth}
    \centering
    \includegraphics[width=\linewidth]{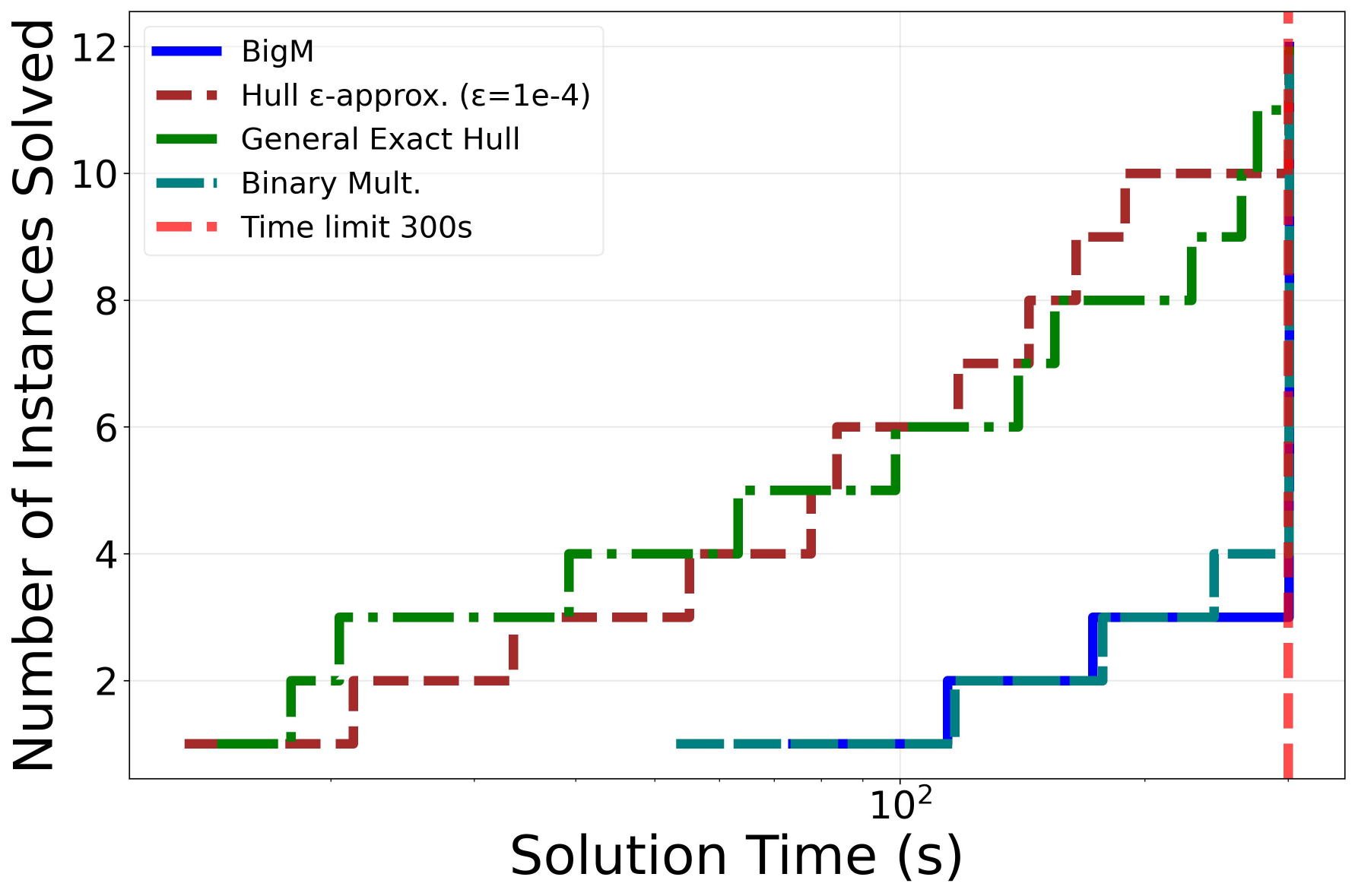}
    \caption{\texttt{SCIP}}
    \label{fig:cstr-scip}
  \end{subfigure}
  \caption{Cumulative number of instances solved versus solution time for different reformulations applied to the CSTR network problem formulated as GDP. Results are shown separately for \texttt{Gurobi}, \texttt{BARON}, and \texttt{SCIP}. Each curve represents a distinct reformulation strategy.}
  \label{fig:combined}
\end{figure}

GEHR performed similarly to the \(\varepsilon\)-approximation in \texttt{SCIP} and was solved clearly faster in \texttt{Gurobi} and \texttt{BARON}, demonstrating an overall advantage for GEHR.

Although a detailed comparison between Big-M and HRs for non-convex GDPs is outside the scope of this paper, it is worth noting that, for \texttt{SCIP}, both HRs were substantially faster than the Big-M reformulation.
For \texttt{BARON}, the \(\varepsilon\)-approximation performed similarly to Big-M, while GEHR was clearly faster.
These results demonstrate that a systematic study of Big-M versus HRs applied to non-convex GDPs could be a valuable direction for future work.

\subsection{k-Means Clustering (convex GDP)}

The $k$-means clustering problem is a classic unsupervised learning task in which $|N|$ data points in $|D|$ dimensions are partitioned into $|K|$ clusters in a way that minimizes the total squared Euclidean distance from each point to its assigned cluster center.
Although it is typically solved using heuristics, the problem can also be formulated as a convex GDP with linear and quadratic constraints~\cite{papageorgiouPseudoBasicSteps2018} and solved to global optimality with established reformulation and optimization techniques.
This makes it a suitable benchmark problem within the GDP framework \cite{bernalneiraConvexMixedintegerNonlinear2024, kronqvistStepsIntermediateRelaxations2021}.

The GDP formulation of the $k$-means clustering problem can be expressed as follows:

\begingroup
\begin{equation}
\setlength{\jot}{0pt}
\begin{aligned}
\min_{\mathbf{c}, \mathbf{d}, Y} \quad & \sum_{i \in N} d_i \\
\text{s.t.} \quad & c_{k-1,1} \leq c_{k,1}, \quad k \in \{2, \ldots, |K|\} \\
& \bigvee_{k \in K} \left[ Y_{ik} \land \left( d_i \geq \sum_{j \in D} (p_{ij} - c_{kj})^2 \right) \right], \quad i \in N \\
& \underset{k \in K}{\underline{\bigvee}} Y_{ik}, \quad i \in N \\
& \mathbf{d} \in \mathbb{R}_+^{|N|} \\
& \mathbf{c} \in \mathbb{R}^{|K| \times |D|} \\
& Y_{ik} \in \{ \text{False}, \text{True} \}, \quad i \in N, k \in K
\end{aligned}
\label{eq:k-means-GDP}
\end{equation}
\endgroup

Here, \( p_{ij} \) is the coordinate of point \( i \) in dimension \( j \), \( c_{kj} \) is the coordinate of cluster center \( k \) in dimension \( j \), and \( d_i \) is the squared distance from point \( i \) to its assigned center.
The constraint \( c_{k-1,1} \leq c_{k,1} \) is a symmetry-breaking condition that ensures consistent cluster ordering.

We generated 96 instances of this problem with parameters in the following ranges: \( 3 \leq |K| \leq 5 \), \( 10 \leq |N| \leq 17 \), and \( 2 \leq |D| \leq 5 \).
The coordinates of the points were generated randomly by sampling a uniform distribution in the interval \([-1, 1]\).

Figure~\ref{fig:k-means} presents cumulative performance profiles, extended by the information on the absolute optimality gap. 
A summary of the solver outcomes is provided in Table~\ref{tab:kmeans-solver-performance}.

Consistent with the random benchmark results, CEHR delivered the best overall performance on the $k$-means clustering instances across all tested solvers.
In addition, no numerical issues were observed for CEHR in any solver.

For \texttt{Gurobi} and \texttt{SCIP}, GEHR performed nearly identically to CEHR and significantly better than $\varepsilon$-approximation.
This indicates that, for this problem class, the solvers treat the two formulations similarly.
A likely explanation is that, unlike the dense random instances, GEHR constraints arising in $k$-means are recognized as convex or conic-representable without explicit lifting, rather than being handled as generic non-convex quadratic expressions.

For \texttt{BARON}, the $\varepsilon$-approximation achieved slightly faster solve times than GEHR on the subset of instances where both reformulations produced correct solutions, although CEHR remained substantially superior overall.
However, GEHR was markedly more numerically robust than the $\varepsilon$-approximation in \texttt{BARON}: the $\varepsilon$-approximation produced 66 incorrect outcomes (infeasible declarations or incorrect optimal solutions), compared to 13 for GEHR.

\begin{figure}[htbp]
  \centering
  \begin{subfigure}[t]{0.32\textwidth}
    \centering
    \includegraphics[width=\linewidth]{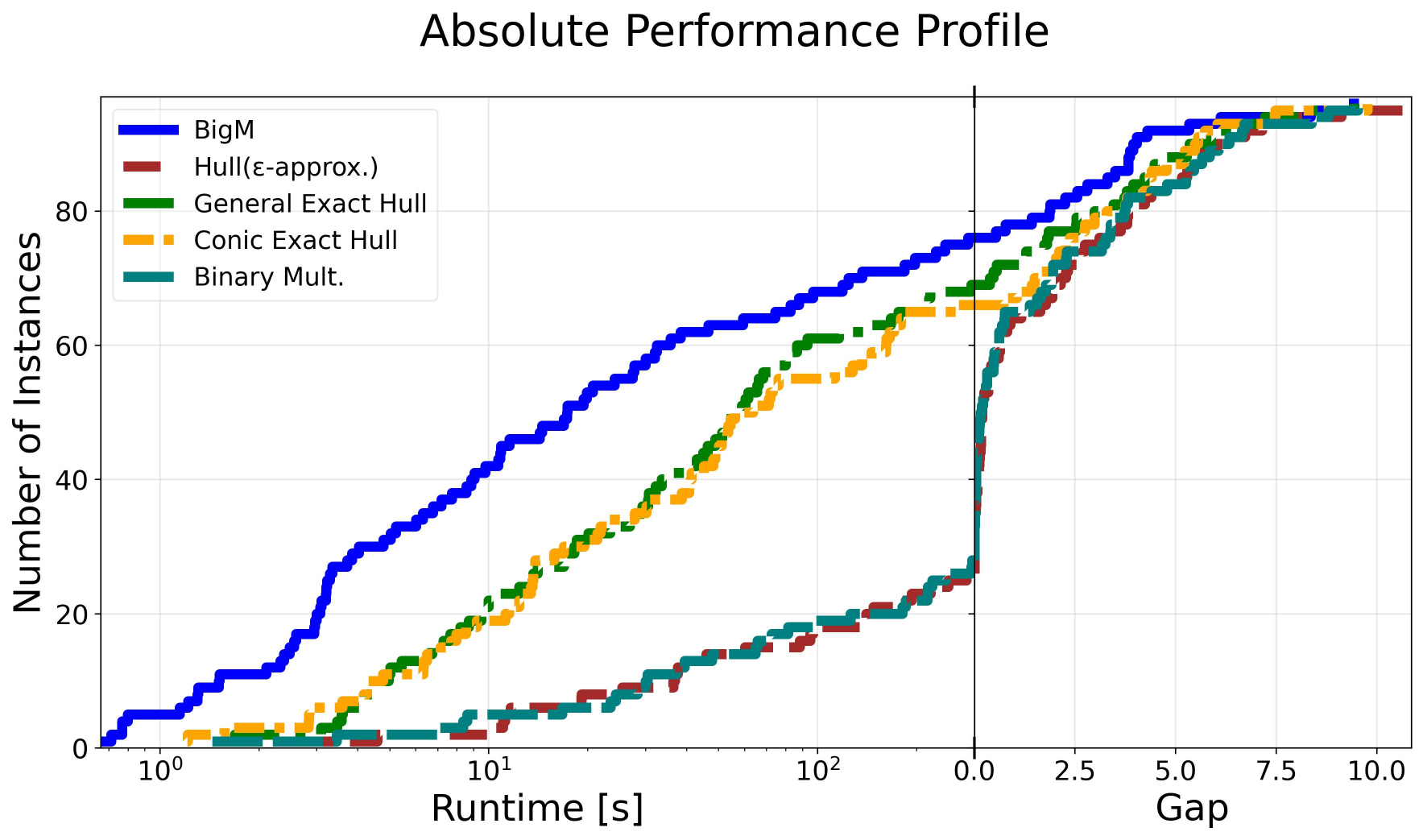}
    \caption{\texttt{Gurobi}}
    \label{fig:k-means-gurobi}
  \end{subfigure}%
  \hspace{0.01\textwidth}
  \begin{subfigure}[t]{0.32\textwidth}
    \centering
    \includegraphics[width=\linewidth]{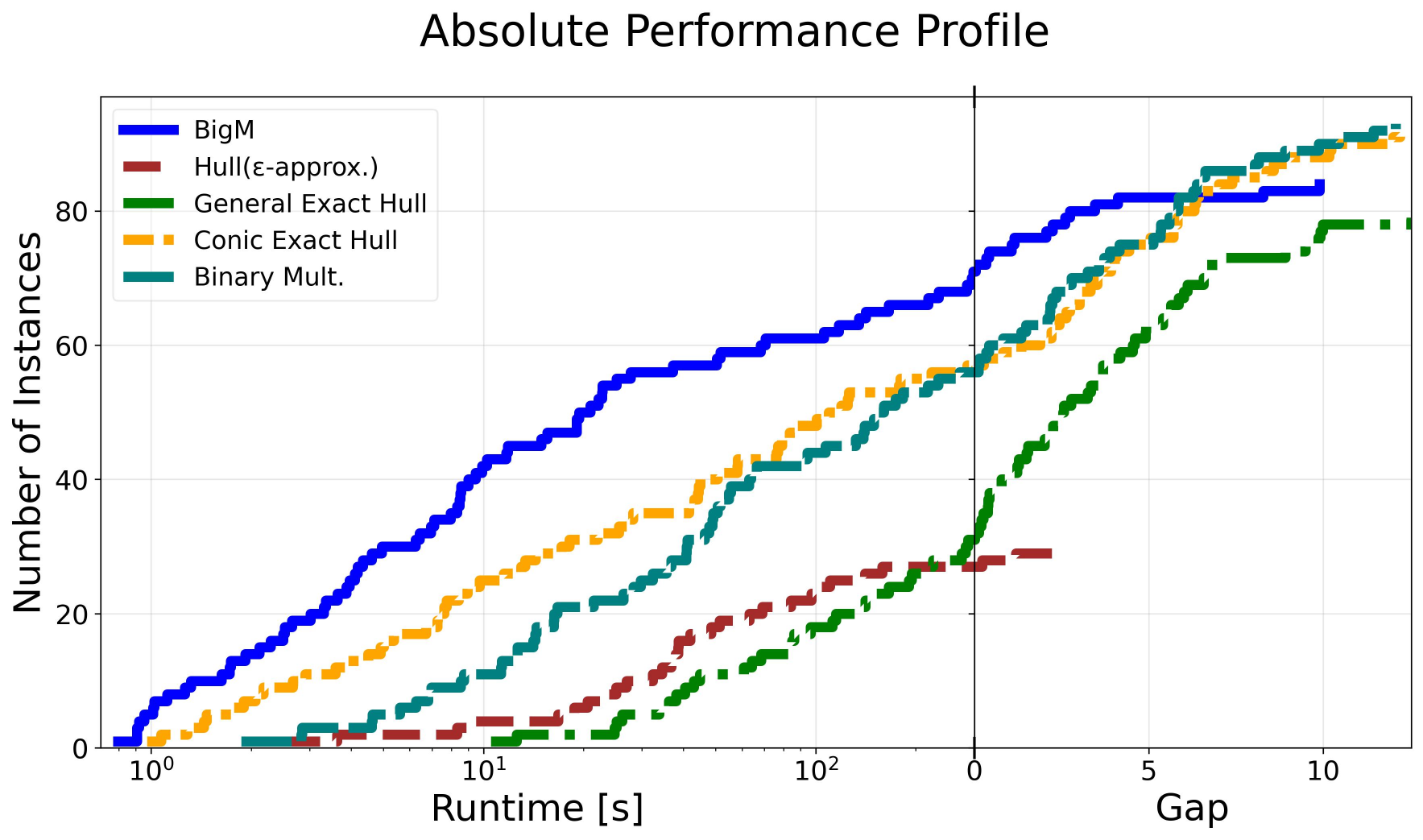}
    \caption{ \texttt{BARON}}
    \label{fig:k-means-baron}
  \end{subfigure}%
  \hspace{0.01\textwidth}
  \begin{subfigure}[t]{0.32\textwidth}
    \centering
    \includegraphics[width=\linewidth]{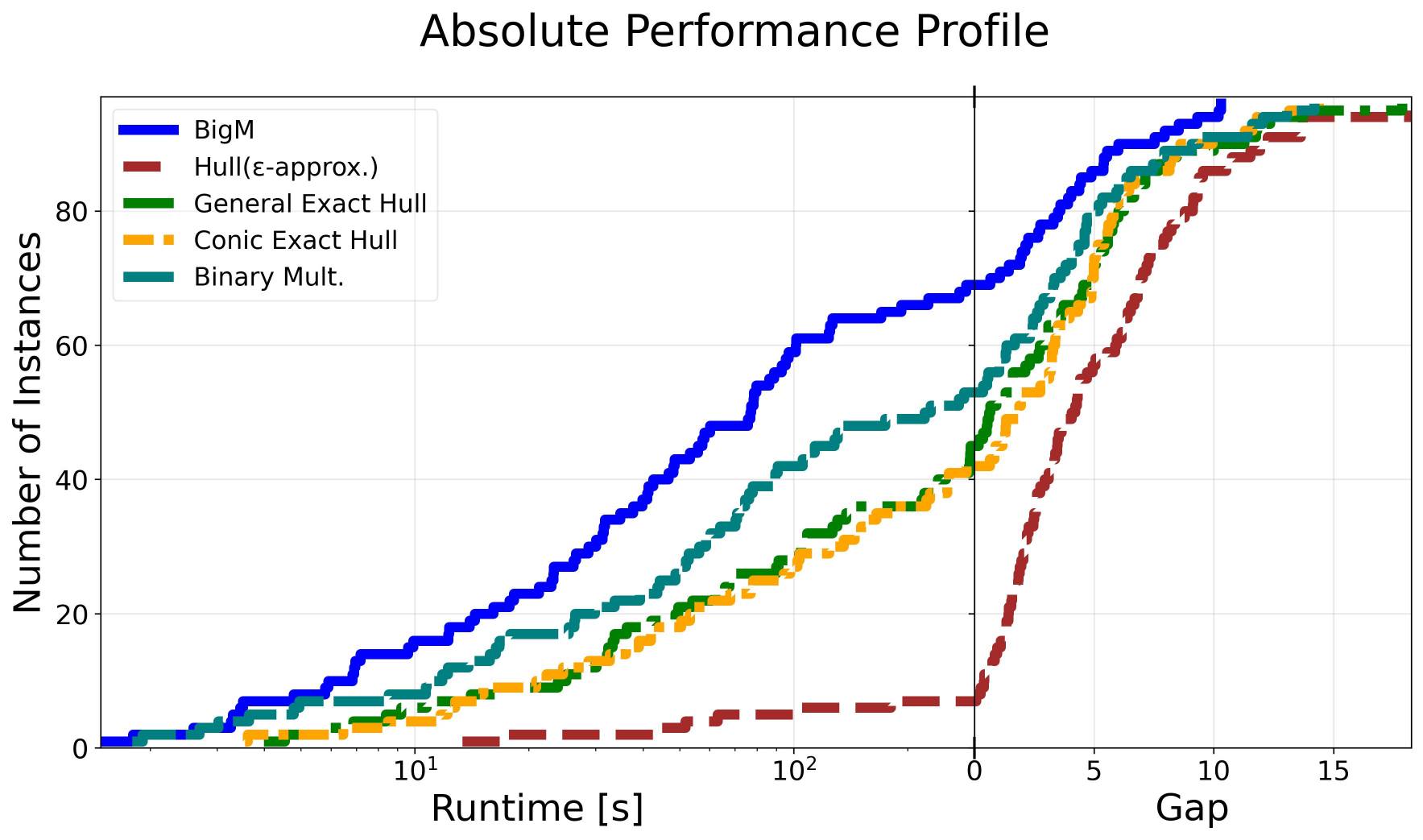}
    \caption{\texttt{SCIP}}
    \label{fig:k-means-scip}
  \end{subfigure}
  \caption{Cumulative number of instances solved versus solution time for different reformulations applied to $k$-means clustering problem. For instances that reached the time limit, cumulative plots are continued with values of absolute optimality gap. Results are shown separately for \texttt{Gurobi}, \texttt{BARON}, and \texttt{SCIP}. Each curve represents a distinct reformulation strategy.}
  \label{fig:k-means}
\end{figure}

\begin{table}[htbp]
\centering
\caption{Solver outcomes across reformulation strategies for the $k$-means clustering problem}
\label{tab:kmeans-solver-performance}
\setlength{\tabcolsep}{4pt}

\begin{minipage}[t]{0.7\textwidth}
\centering
\small
\begin{tabular}{lccccc}
\toprule
\textbf{Strategy} & \textbf{Optimal} & \textbf{Timeout} & \textbf{Infeasible} & \textbf{\shortstack{Objective \\ mismatch}}& \textbf{Total} \\
\midrule
\multicolumn{6}{l}{\textbf{Solver: \texttt{Gurobi}}} \\
BigM                & 76 & 20 & 0  & 0  & 96 \\
Hull($\varepsilon$-approx.) & 26 & 70 & 0  & 0  & 96 \\
GEHR  & 69 & 27 & 0  & 0  & 96 \\
CEHR    & 66 & 30 & 0  & 0  & 96 \\
Binary Mult.        & 28 & 68 & 0  & 0  & 96 \\
\midrule
\multicolumn{6}{l}{\textbf{Solver: \texttt{BARON}}} \\
BigM                & 83 & 13 & 0  & 0  & 96 \\
Hull($\varepsilon$-approx.) & 27 & 3  & 18 & 48 & 96 \\
GEHR  & 35 & 48 & 0  & 13 & 96 \\
CEHR    & 61 & 35 & 0  & 0  & 96 \\
Binary Mult.        & 59 & 37 & 0  & 0  & 96 \\
\midrule
\multicolumn{6}{l}{\textbf{Solver: \texttt{SCIP}}} \\
BigM                & 69 & 27 & 0 & 0 & 96 \\
Hull($\varepsilon$-approx.) & 7  & 88 & 0 & 1 & 96 \\
GEHR  & 45 & 51 & 0 & 0 & 96 \\
CEHR    & 42 & 54 & 0 & 0 & 96 \\
Binary Mult.        & 53 & 43 & 0 & 0 & 96 \\
\bottomrule
\end{tabular}
\end{minipage}
\end{table}

\subsection{Constrained layout problem (convex GDP)}

The constrained layout involves placing a set of non-overlapping rectangular components within a bounded area to minimize a given objective, the total cost of connections, while satisfying geometric and containment constraints.
This problem can be modeled as a convex GDP with both linear and quadratic constraints and is frequently used as a benchmark in GDP literature \cite{ruizHierarchyRelaxationsNonlinear2012,furmanComputationallyUsefulAlgebraic2020, bernalneiraConvexMixedintegerNonlinear2024}.

In the version of the problem considered here, rectangular 2D components must be positioned within one of several enclosing circular regions, such that the rectangles do not overlap and the total pairwise connection cost is minimized.
The objective function employs either an \(\ell_1\) or \(\ell_2\) norm to penalize distances between rectangles.

The GDP formulation can be expressed as follows:

\begingroup
\begin{equation}
\label{eq:clay}
\begin{aligned}
\min_{\delta_x, \delta_y, \mathbf{x}, \mathbf{y}, \mathbf{W}, \mathbf{Y}} \quad
& \sum_{i,j \in N,\, i < j} c_{ij}\,\|(\delta_{x_{ij}}, \delta_{y_{ij}})\|_p
\\[0.3em]
\text{s.t.} \quad
& \delta_{x_{ij}} \ge x_i - x_j,
\qquad \forall\, i,j \in N,\ i<j,
\\
& \delta_{x_{ij}} \ge x_j - x_i,
\qquad \forall\, i,j \in N,\ i<j,
\\
& \delta_{y_{ij}} \ge y_i - y_j,
\qquad \forall\, i,j \in N,\ i<j,
\\
& \delta_{y_{ij}} \ge y_j - y_i,
\qquad \forall\, i,j \in N,\ i<j,
\\[0.3em]
&
\left[
\begin{array}{c}
Y_{ij}^1 \\
x_i + L_i/2 \le x_j - L_j/2
\end{array}
\right]
\vee
\left[
\begin{array}{c}
Y_{ij}^2 \\
x_j + L_j/2 \le x_i - L_i/2
\end{array}
\right]
\\
& \vee
\left[
\begin{array}{c}
Y_{ij}^3 \\
y_i + H_i/2 \le y_j - H_j/2
\end{array}
\right]
\vee
\left[
\begin{array}{c}
Y_{ij}^4 \\
y_j + H_j/2 \le y_i - H_i/2
\end{array}
\right],
\\
& \hspace{2em} \forall\, i,j \in N,\ i<j,
\\[0.3em]
&
\bigvee_{t \in T}
\left[
\begin{array}{c}
W_{it} \\
(x_i + L_i/2 - x_{ct})^2 + (y_i + H_i/2 - y_{ct})^2 \le r_t^2 \\
(x_i + L_i/2 - x_{ct})^2 + (y_i - H_i/2 - y_{ct})^2 \le r_t^2 \\
(x_i - L_i/2 - x_{ct})^2 + (y_i + H_i/2 - y_{ct})^2 \le r_t^2 \\
(x_i - L_i/2 - x_{ct})^2 + (y_i - H_i/2 - y_{ct})^2 \le r_t^2
\end{array}
\right],
\\
& \hspace{2em} \forall\, i \in N,
\\[0.3em]
& Y_{ij}^1 \underline{\vee} Y_{ij}^2 \underline{\vee} Y_{ij}^3 \underline{\vee} Y_{ij}^4,
\qquad \forall\, i,j \in N,\ i<j,
\\
& \underset{t \in T}{\underline{\bigvee}} W_{it},
\qquad \forall\, i \in N,
\\
& x_i^l \le x_i \le x_i^u,
\qquad \forall\, i \in N,
\\
& y_i^l \le y_i \le y_i^u,
\qquad \forall\, i \in N,
\\
& \delta_{x_{ij}}, \delta_{y_{ij}} \in \mathbb{R}_+,
\qquad \forall\, i,j \in N,\ i<j,
\\
& x_i, y_i \in \mathbb{R},
\qquad \forall\, i \in N,
\\
& Y_{ij}^1, Y_{ij}^2, Y_{ij}^3, Y_{ij}^4 \in \{\mathrm{False}, \mathrm{True}\},
\qquad \forall\, i,j \in N,\ i<j,
\\
& W_{it} \in \{\mathrm{False}, \mathrm{True}\},
\qquad \forall\, i \in N,\ t \in T.
\end{aligned}
\end{equation}
\endgroup

Here, \(N\) is the set of rectangles. The variables \(x_i\) and \(y_i\) denote the coordinates of the center of rectangle \(i\), while \(\delta_{x_{ij}}\) and \(\delta_{y_{ij}}\) represent the horizontal and vertical distances between rectangles \(i\) and \(j\).
The parameters \(L_i\) and \(H_i\) denote the width and height of rectangle \(i\), respectively.

The set \(T\) contains all enclosing circles, each with center coordinates \((x_{ct}, y_{ct})\) and radius \(r_t\).
The binary variables \(Y_{ij}^t\) encode the relative position of rectangles \(i\) and \(j\) to enforce non-overlap, while \(W_{it}\) indicates whether rectangle \(i\) is assigned to circle \(t\).
The coefficient \(c_{ij}\) is the cost of the distance between connected rectangle pairs in the objective.

The objective function minimizes the total pairwise connection cost based on the \(\ell_p\)-norm of the distance between rectangle centers, where \(p \in \{1,2\}\).
Therefore, depending on the value of \(p\), the objective function takes the form:

\[
\min \sum_{i,j \in N,\, i < j} c_{ij} \cdot 
\begin{cases}
\delta_{x_{ij}} + \delta_{y_{ij}} & \text{if } p = 1 \\
\sqrt{\delta_{x_{ij}}^2 + \delta_{y_{ij}}^2} & \text{if } p = 2
\end{cases}
\]

To assess reformulation performance, we tested six instances of this problem implemented in the \texttt{Pyomo.GDP} example library\footnote{\url{https://github.com/Pyomo/pyomo/tree/main/examples/gdp/constrained_layout}} using both \(\ell_1\) and \(\ell_2\) norm.
A summary of results is provided in Figures~\ref{fig:layout_profiles} and Table~\ref{tab:layout-solver-performance}.

While the limited size and number of benchmark problems prevent drawing general conclusions, the observed results suggest that, overall, both CEHR and GEHR performed on par with the \(\varepsilon\)-approximation for constrained layout problems.

\begin{figure}[htbp]
  \centering
  \begin{subfigure}[t]{0.32\textwidth}
    \centering
    \includegraphics[width=\linewidth]{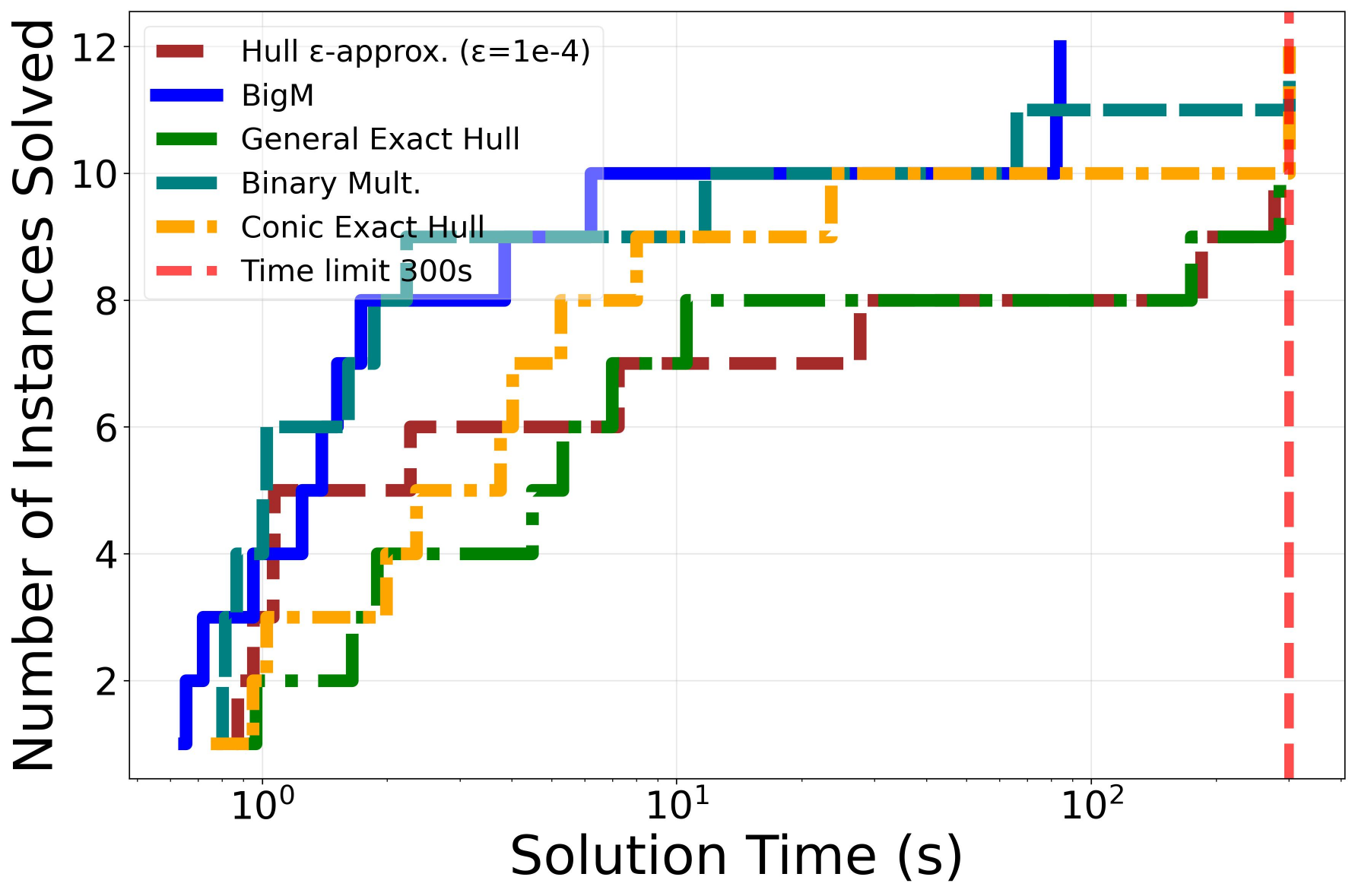}
    \caption{\texttt{Gurobi}}
    \label{fig:layout-gurobi}
  \end{subfigure}%
  \hspace{0.01\textwidth}
  \begin{subfigure}[t]{0.32\textwidth}
    \centering
    \includegraphics[width=\linewidth]{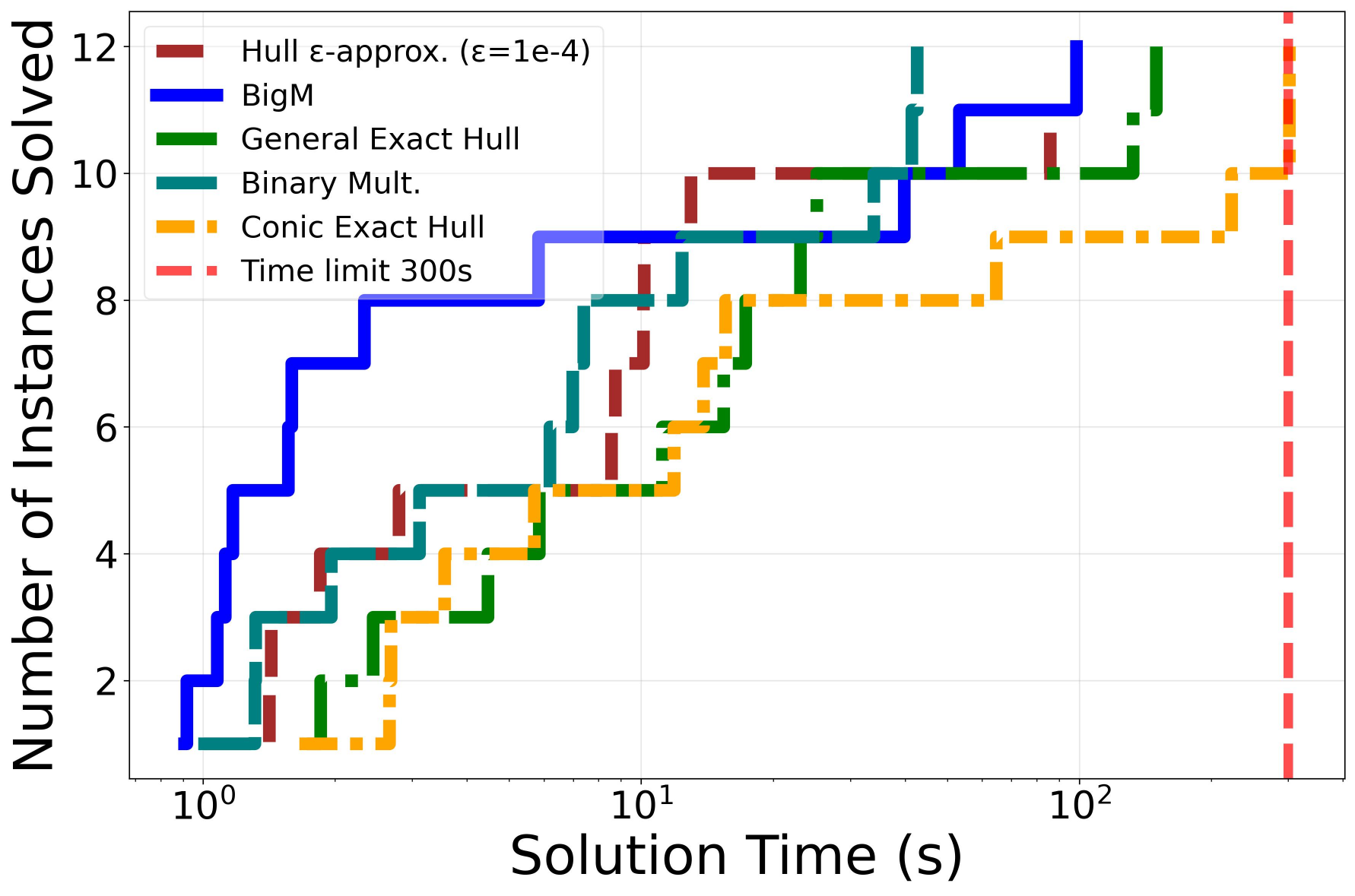}
    \caption{\texttt{BARON}}
    \label{fig:layout-baron}
  \end{subfigure}%
  \hspace{0.01\textwidth}
  \begin{subfigure}[t]{0.32\textwidth}
    \centering
    \includegraphics[width=\linewidth]{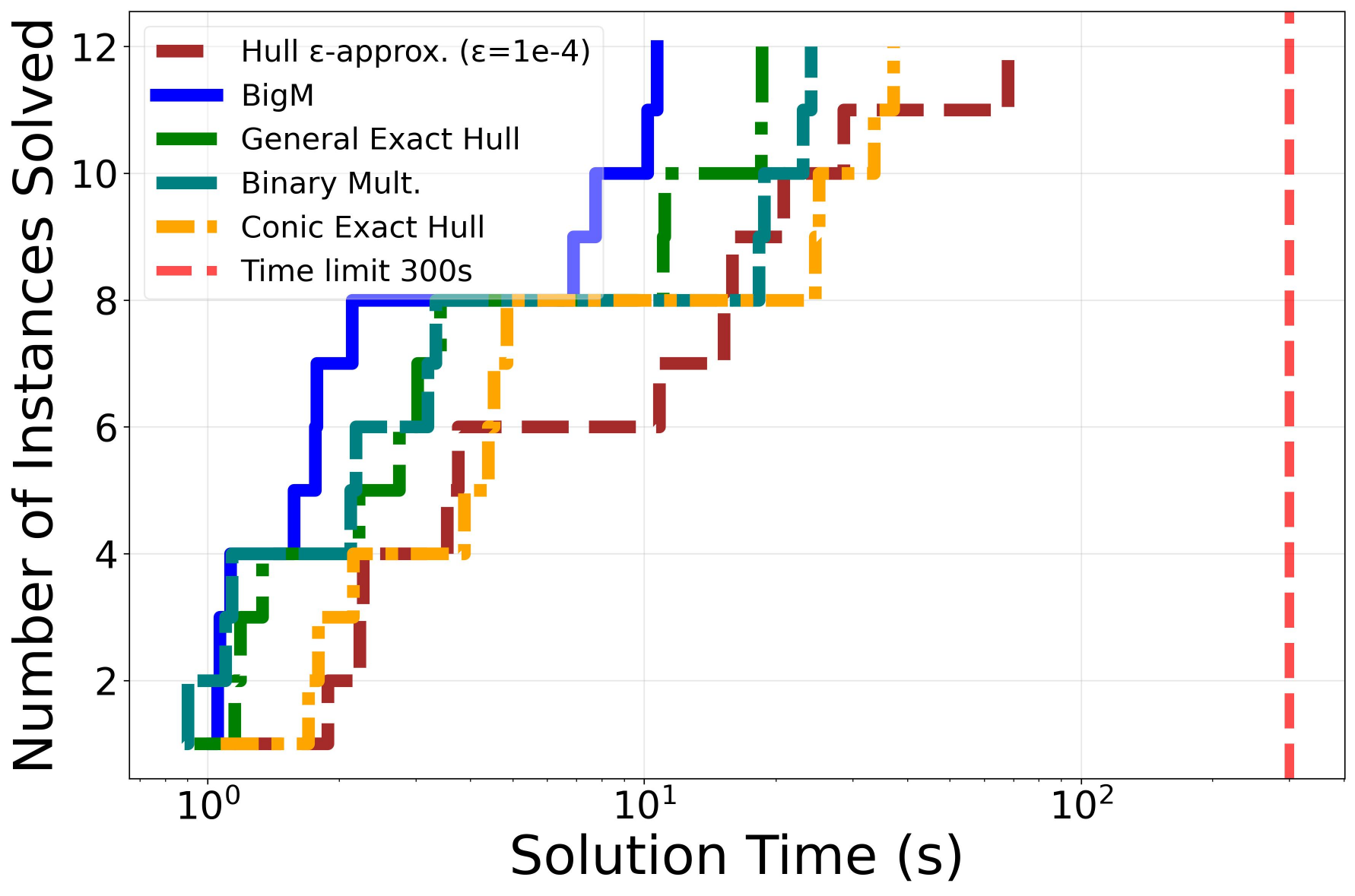}
    \caption{\texttt{SCIP}}
    \label{fig:layout-scip}
  \end{subfigure}
  \caption{Cumulative number of constrained layout problem instances solved versus solution time, comparing the \(\varepsilon\)-approximation and the proposed exact Hull reformulation. Results are shown separately for \texttt{Gurobi}, \texttt{BARON}, and \texttt{SCIP}. Each curve represents a reformulation strategy, highlighting their relative performance in terms of solution speed and number of instances solved.}
  \label{fig:layout_profiles}
\end{figure}


\begin{table}[htbp]
\centering
\caption{Solver outcomes across reformulation strategies for the constrained layout problem}
\label{tab:layout-solver-performance}
\setlength{\tabcolsep}{4pt}

\begin{minipage}[t]{0.7\textwidth}
\centering
\small
\begin{tabular}{lcccc}
\toprule
\textbf{Strategy} & \textbf{Optimal} & \textbf{Timeout} & \textbf{\shortstack{Objective \\ mismatch}} & \textbf{Total} \\
\midrule
\multicolumn{5}{l}{\textbf{Solver: \texttt{Gurobi}}} \\
BigM                         & 12 & 0 & 0 & 12 \\
Hull $\varepsilon$-approx.\ ($\varepsilon=10^{-4}$) & 10 & 0 & 2 & 12 \\
GEHR           & 10 & 0 & 2 & 12 \\
CEHR             & 10 & 2 & 0 & 12 \\
Binary Mult.                 & 11 & 1 & 0 & 12 \\
\midrule
\multicolumn{5}{l}{\textbf{Solver: \texttt{BARON}}} \\
BigM                         & 12 & 0 & 0 & 12 \\
Hull $\varepsilon$-approx.\ ($\varepsilon=10^{-4}$) & 11 & 0 & 1 & 12 \\
GEHR           & 12 & 0 & 0 & 12 \\
CEHR             & 10 & 2 & 0 & 12 \\
Binary Mult.                 & 12 & 0 & 0 & 12 \\
\midrule
\multicolumn{5}{l}{\textbf{Solver: \texttt{SCIP}}} \\
BigM                         & 12 & 0 & 0 & 12 \\
Hull $\varepsilon$-approx.\ ($\varepsilon=10^{-4}$) & 12 & 0 & 0 & 12 \\
GEHR           & 12 & 0 & 0 & 12 \\
CEHR             & 12 & 0 & 0 & 12 \\
Binary Mult.                 & 12 & 0 & 0 & 12 \\
\bottomrule
\end{tabular}
\end{minipage}
\end{table}

\section{Conclusions}
\label{sec:conclusion}

This paper investigated HR for GDPs with quadratic disjunctive constraints, with the goal of avoiding the convexity recognition, numerical and relaxation-quality drawbacks introduced by the standard $\varepsilon$-approximation of the closure of the perspective function.

\paragraph{Hull Reformulation for non-convex GDPs.}
HR is a valid reformulation technique even when a GDP contains non-convex constraints. The reformulation preserves the intended on/off behavior at binary values, recovering the original constraints when $y=1$ and rendering them redundant when $y=0$.

In the non-convex setting, however, the continuous relaxation produced by HR is not guaranteed to be convex. Moreover, while HR is designed to represent the convex hull of the feasible region of each individual disjunction when the constraints in the disjuncts are convex, this interpretation does not hold when the disjunctive constraints are non-convex. Consequently, the resulting relaxation should not be interpreted as producing the convex hull of the disjunction.

As a result, HR of non-convex GDP is naturally targeted to global MINLP solvers, which may additionally construct convex underestimators internally when computing bounds.

\paragraph{General Exact Hull Reformulation for quadratic constraints.}
For general quadratic constraints, possibly non-convex, we showed GEHR that preserves quadratic degree and avoids the fractional expressions introduced by the $\varepsilon$-approximation.
Also, because the reformulation is exact, it avoids feasible-region enlargement associated with $\varepsilon>0$ and removes the need to tune an approximation parameter.

\paragraph{Conic Exact Hull Reformulation for convex quadratics.}
For convex quadratic disjunctive constraints ($Q\succeq 0$), we showed derivation of CEHR, which is expressed by a rotated second-order cone representable inequality of the form $\mathbf v^\top Q \mathbf v \le t\,y$, together with a linear inequality linking $(t,\mathbf v,y)$.
This provides a solver-friendly reformulation for convex quadratics with a conic-representable structure, enabling direct exploitation by conic-capable solvers.

\paragraph{Summary of computational findings.}
Across random convex and non-convex quadratic GDPs, convex $k$-means clustering, a non-convex CSTR network benchmark, and constrained layout instances, the proposed exact formulations reduced numerical failures relative to the $\varepsilon$-approximation and frequently improved or had comparable runtime.

In our experiments, CEHR was consistently the most reliable hull formulation on convex benchmarks and achieved the best overall performance, especially for solvers that recognize and exploit conic structure. 
Its substantial advantage over the $\varepsilon$-approximation is largely attributable to the fact that the $\varepsilon$-approximation can obscure convexity and hinder convexity detection by the solver.

On both convex and non-convex benchmarks, GEHR was more numerically stable and typically faster than the $\varepsilon$-approximation unless it was a convex GDP and a solver was explicitly informed about convexity.

\paragraph{Implications for modeling.}
Overall, our results support using CEHR for convex quadratically constrained GDPs and GEHR for non-convex instances over $\varepsilon$-approximation when HR is applied to GDP.

\paragraph{Future work.}
This work motivates several directions for future research. First, the polynomial generalization in Appendix~\ref{app:poly-hull} should be benchmarked systematically to understand when preserving polynomial degree yields practical gains over alternative lifting strategies. 
Second, for non-convex GDPs, it would be valuable to compare direct application of hull reformulations with approaches that convexify disjunctive constraints prior to reformulation, and to benchmark hull reformulations against other formulations, such as Big-M, when applied directly to non-convex GDPs.
Finally, improving solver-side convexity detection and handling of perspective-type (i.e., $\varepsilon$-approximation) constraints remains an important opportunity for future solver development.

\section*{Acknowledgments}

The authors acknowledge and express sincere gratitude to the Davidson School of Chemical Engineering at Purdue University for providing a supportive research environment and funding.

The authors thank the GAMS Development Corporation for providing a personal license and access to the \texttt{BARON} solver for our group.

The authors thank \texttt{Gurobi} Optimization for providing access to software through their academic licensing program.

The authors thank Prof. Can Li for providing access to the hardware.

The authors thank Kevin Furman,  Nicolas Sawaya, Ignacio Grossmann,  Nikolaos Sahinidis for their helpful feedback provided through discussions of the preprint and poster presentations.

This material is based upon work supported by the National Science Foundation under Award No. 2430617.

\begin{appendices}
\normalsize

\vspace{-2ex}
\section{From Binary Multiplication to the Hull Reformulation}
\label{app:bm-to-hull}

We outline how HR for a single disjunction can be systematically derived from the simpler Binary Multiplication reformulation by taking a convex combination of disjuncts.
This derivation does not rely on convexity assumptions, demonstrating that HR is applicable to both convex and non-convex GDPs. 
It also offers additional insight into the interpretation of the reformulation when applied to non-convex problems.
This appendix is adapted from Appendix A of Lee and Grossmann~\cite{leeNewAlgorithmsNonlinear2000}.

\vspace{5pt}
Consider one disjunction
\begingroup
\begin{equation*}
\begin{aligned}
    & \bigvee_{D}
        \begin{bmatrix} 
            Y_{i} \\
            h_{i}(\mathbf{x}) \leq 0 
        \end{bmatrix}\\
    & \underset{i \in D}{\underline{\bigvee}} Y_{i}\\
\end{aligned}
\label{eq:Disjunction}
\end{equation*}
\endgroup

\noindent where every $h_i:\mathbb R^n\to\mathbb R$ is proper (defined and finite on its domain) and continuously
extended at the boundary of the feasible region, and where continuous variables are bounded:
$\mathbf x^{\ell}\le \mathbf x\le \mathbf x^{u}$.

\subsection{Binary Multiplication reformulation}
The Binary Multiplication reformulation of the disjunction, shown above, activates the constraints by multiplying it with the corresponding binary~$y_i$:

\begingroup
\begin{equation*}
\begin{aligned}
y_i\,h_i(\mathbf x) &\le 0 &&\forall\; i\in D,
\\
\sum_{i\in D} y_i &= 1,
& y_i &\in\{0,1\}\quad \forall\; i\in D.
\end{aligned}
\end{equation*}
\endgroup

Because $y_i=1$ restores $h_i(\mathbf x)\le 0$ and $y_i=0$  makes the constraint trivially satisfied, the reformulation is exact for binary~$y$, and therefore valid.
However, its continuous relaxation ($y_i\in[0,1]$) provides the same feasible region as the MINLP problem,  when projected to $\mathbf{x}$, hence it offers no relaxation advantages and the continuous relaxation problem is equivalent to the original.
\subsection{Taking a convex combination}
Let $\mathbf x_i$ be a copy of the original variable associated with the feasible region of disjunct $i$. We now form a convex combination of the disjuncts, which yields the convex hull of the disjunction if all $h_i(\mathbf x)$ are convex.
Note, however, that we don't have to assume convexity and convex combinations can also be taken for non-convex sets, but the resulting set will not, in general, be convex, which is illustrated in Figure~\ref{fig:epsilon_approx}.
Thus, these steps remain valid for non-convex problems, although they no longer guarantee the convex hull of the disjunction.
The convex combination of disjuncts can then be defined as follows:

\begingroup
\begin{equation*}
\begin{aligned}
& \mathbf{x} = \sum_{i \in D}y_i\mathbf{x}_i, \\
& \sum_{i \in D}y_i = 1, \\
& y_i\,h_i(\mathbf x_i) \le 0 &&\forall\; i\in D,\\
& y_i\in[0,1]
\end{aligned}
\end{equation*}
\endgroup

\subsection{Hull Reformulation}

Let  $\mathbf v_i=y_i\mathbf x_i\in\mathbb R^n$ be a disaggregated copy, scaled by~$y_i$. Because $\mathbf x_i$ is bounded by $\mathbf x^{\ell}\le \mathbf x_i\le \mathbf x^{u}$, $\mathbf v_i$ is bounded by $\mathbf x^{\ell}y_i\le \mathbf v_i\le \mathbf x^{u}y_i$.

The constraint function \( y_i\, h_i(\mathbf{x}_i) \) that defines the feasible region of each disjunct reduces to zero when \( y_i = 0 \), since \( h_i(\mathbf{x}_i) \) is assumed to be finite. When \( y_i > 0 \), we can apply the change of variables \( \mathbf{x}_i = \frac{\mathbf{v}_i}{y_i} \). Also it should be noted that $ \mathbf{x} = \sum_{i \in D}y_i\mathbf{x}_i = \sum_{i \in D}\mathbf{v}_i$

Collecting all constraints yields HR of the disjunction:

\begingroup
\begin{equation*}
\begin{aligned}
    & \mathbf{x} = \sum_{i \in D_k} \mathbf{v}_{i},\\
    & \left( \text{cl} \, \widetilde{h}_{i} \right)(\mathbf{v}_{i}, y_{i}) \leq 0, i \in D \\
    & \mathbf x^{\ell} y_{i} \leq \mathbf{v}_{i} \leq \mathbf{x}^u y_{i} \\
    & \mathbf{v}_{i} \in \mathbb{R}^n, i \in D\\
\end{aligned}
\end{equation*}
\endgroup

where in the domain of reformulation
\begingroup
\begin{equation*}
    \left( \text{cl} \, \widetilde{h} \right)(\mathbf{v}, y) = 
\begin{cases} 
    y h\left(\frac{\mathbf{v}}{y}\right) & \text{if } y > 0 \\[5pt]
    0  & \text{if } y = 0 \\[1pt]
\end{cases}
\end{equation*}
\endgroup

Thus, HR can be systematically obtained from Binary Multiplication by taking a convex combination of disjuncts in lifted space.
Note that convexity is not assumed at any step, and for non-convex function $h_i(\mathbf{x})$, the construction remains valid, but the continuous relaxation is no longer guaranteed to be convex.

\section{Extension of the Exact Hull Reformulation to Polynomial Constraints of Degree $d$}
\label{app:poly-hull}

Consider a polynomial disjunctive constraint \(h(\mathbf x) \leq 0\), such that:
\[
    h(\mathbf x)\;=\;\sum_{k=0}^{d} p_k(\mathbf x),
\]
where a scalar polynomial of total degree $d\ge 2$, expressed as a sum of its homogeneous components (parts of the polynomial consisting of all monomials of the same total degree)
$p_k:\mathbb R^{n}\!\to\!\mathbb R$ of degree $k$ ($p_0$ is the constant term).

The continuous relaxation of the standard HR for this polynomial constraint introduces the following set in the lifted space:  
\[
    S_{d}^{(1)}
    \;=\;
    \Bigl\{
        (\mathbf v,y)\in\mathbb R^{n}\!\times\![0,1]
        \;\bigm|\;
        \operatorname{cl}\widetilde h(\mathbf v,y)\le 0,
        \;
        \mathbf x^{\ell}y\le\mathbf v\le \mathbf x^{u}y
    \Bigr\},
\]
where the closure of the perspective function in the domain of the reformulation can be expressed as:  
\begingroup
\begin{equation*}
\setlength{\jot}{0pt}
\begin{aligned}
    \bigl(\operatorname{cl}\widetilde h\bigr)(\mathbf v,y)
    \;=\;
    \begin{cases}
        y\,h\!\bigl(\mathbf v/y\bigr), & y>0,\\[2pt]
        0, & y=0.
    \end{cases}
\end{aligned}
\end{equation*}
\endgroup
which, in the case of polynomial constraints simplifies to:
\begingroup
\begin{equation*}
\setlength{\jot}{0pt}
\begin{aligned}
    \bigl(\operatorname{cl}\widetilde h\bigr)(\mathbf v,y)
    \;=\;
    \begin{cases}
    \sum_{k=0}^{d} p_k(\mathbf v)\,y^{\,1-k}, & y>0,\\[2pt]
        0, & y=0.
    \end{cases}
\end{aligned}
\end{equation*}
\endgroup

Multiplying  \( \sum_{k=0}^{d} p_k(\mathbf v)\,y^{\,1-k}\) by $y^{\,d-1}$  motivates the alternative set
\[
    S_{d}^{(2)}
    \;=\;
    \Bigl\{
        (\mathbf v,y)\in\mathbb R^{n}\!\times\![0,1]
        \;\bigm|\;
        \sum_{k=0}^{d} p_k(\mathbf v)\,y^{\,d-k}\le 0,
        \;
        \mathbf x^{\ell}y\le \mathbf v\le \mathbf x^{u}y
    \Bigr\}.
\]

We demonstrate the equivalence of these two sets in the following proposition:

\begin{proposition}\label{prop:poly-sets}

For any polynomial constraint \(h(\mathbf v)\;=\;\sum_{k=0}^{d} p_k(\mathbf v) \leq 0\) of degree~$d\!\ge\!2$, define the sets
\[
    S_{d}^{(1)}
    \;=\;
    \Bigl\{
        (\mathbf v,y)\in\mathbb R^{n}\!\times\![0,1]
        \;\bigm|\;
        \operatorname{cl}\widetilde h(\mathbf v,y)\le 0,
        \;
        \mathbf x^{\ell}y\le\mathbf v\le \mathbf x^{u}y
    \Bigr\},
\]
where
\begingroup
\begin{equation*}
\setlength{\jot}{0pt}
\begin{aligned}
    \bigl(\operatorname{cl}\widetilde h\bigr)(\mathbf v,y)
    \;=\;
    \begin{cases}
    \sum_{k=0}^{d} p_k(\mathbf v)\,y^{\,1-k}, & y>0,\\[2pt]
        0, & y=0.
    \end{cases}
\end{aligned}
\end{equation*}
\endgroup
and 
\[
    S_{d}^{(2)}
    \;=\;
    \Bigl\{
        (\mathbf v,y)\in\mathbb R^{n}\!\times\![0,1]
        \;\bigm|\;
        \sum_{k=0}^{d} p_k(\mathbf v)\,y^{\,d-k}\le 0,
        \;
        \mathbf x^{\ell}y\le \mathbf v\le \mathbf x^{u}y
    \Bigr\}.
\]

Then, 
the two sets coincide: \(\displaystyle S_{d}^{(1)} = S_{d}^{(2)}\).
\end{proposition}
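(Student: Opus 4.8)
The plan is to follow the template of Proposition~\ref{prop:sets} almost verbatim, splitting into the two cases $y>0$ and $y=0$. Both sets $S_d^{(1)}$ and $S_d^{(2)}$ carry the identical box constraints $\mathbf x^{\ell}y\le\mathbf v\le\mathbf x^{u}y$ and the identical domain restriction $y\in[0,1]$, so the only thing to verify is that the two scalar inequalities cut out the same feasible points within that common box.

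For the case $y>0$, I would observe that $y^{d-1}>0$, so multiplying the defining inequality of $S_d^{(1)}$, namely $\sum_{k=0}^d p_k(\mathbf v)\,y^{1-k}\le 0$, by $y^{d-1}$ preserves the direction of the inequality and is reversible. Since $y^{1-k}\cdot y^{d-1}=y^{d-k}$ term by term, this yields exactly $\sum_{k=0}^d p_k(\mathbf v)\,y^{d-k}\le 0$, the defining inequality of $S_d^{(2)}$. Hence the two sets have the same points with $y>0$; this step is routine and carries no subtlety beyond positivity of $y^{d-1}$.

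For the case $y=0$, the box constraints collapse to $0\le\mathbf v\le 0$, forcing $\mathbf v=\mathbf 0$, so the only candidate point in either set is $(\mathbf 0,0)$. For $S_d^{(1)}$ the constraint evaluates to $\operatorname{cl}\widetilde h(\mathbf 0,0)=0\le 0$ by definition. For $S_d^{(2)}$ I would argue that the polynomial $\sum_{k=0}^d p_k(\mathbf v)\,y^{d-k}$ is homogeneous of total degree $d$ in the joint variable $(\mathbf v,y)$: each $p_k$ is homogeneous of degree $k$ in $\mathbf v$, and multiplying by $y^{d-k}$ lifts every monomial to total degree $d$. A homogeneous polynomial of degree $d\ge 1$ vanishes at the origin, so the $S_d^{(2)}$ constraint also reads $0\le 0$ at $(\mathbf 0,0)$. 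Thus the $y=0$ points coincide.

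The main obstacle, and the only place where more care is needed than in the quadratic proof, is precisely this $y=0$ evaluation: a naive term-by-term substitution meets the ill-defined expression $0^0$ in the top term $p_d(\mathbf v)\,y^{0}$. The clean remedy is the homogeneity observation above, which sidesteps $0^0$ entirely; alternatively one can note term by term that $p_k(\mathbf 0)=0$ for every $k\ge 1$ (homogeneity of positive degree) and that the constant term contributes $p_0\,y^{d}=0$ since $d\ge 1$, so every summand is zero irrespective of the convention for $0^0$. Combining the two cases gives $S_d^{(1)}=S_d^{(2)}$, completing the argument.
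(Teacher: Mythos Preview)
Your proposal is correct and follows essentially the same two-case argument as the paper's proof: multiply by $y^{d-1}$ when $y>0$, and use the box constraints to reduce the $y=0$ case to checking the single point $(\mathbf 0,0)$. Your treatment is in fact slightly more careful than the paper's at $y=0$, since you explicitly address the $0^0$ ambiguity in the term $p_d(\mathbf v)\,y^{d-d}$ via the homogeneity observation (each $p_k$ with $k\ge 1$ vanishes at $\mathbf 0$, and the constant term carries $y^d$ with $d\ge 1$), whereas the paper simply writes $\sum_{k=0}^{d}p_k(\mathbf 0)\,0^{d-k}=0$ without comment.
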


\begin{proof}
\textbf{Case 1, \(y>0\):}  
Because \(y^{d-1}>0\), multiplying the defining inequality of
\(S_{d}^{(1)}\) by \(y^{d-1}\) preserves the feasible region and produces the
polynomial inequality that defines \(S_{d}^{(2)}\).
Hence \(S_{d}^{(1)}\cap\{y>0\}=S_{d}^{(2)}\cap\{y>0\}\).

\medskip\noindent
\textbf{Case 2: \(y=0\):}  
From the constraints \(\mathbf x^{\ell}y\le\mathbf v\le \mathbf x^{u}y\)
it follows that \(y=0\) implies \(\mathbf v=\mathbf 0\).
Substituting \((\mathbf v,y)=(\mathbf 0,0)\) into the polynomial expression
gives \(\sum_{k=0}^{d}p_k(\mathbf 0)\,0^{d-k}=0\), matching
\(\operatorname{cl}\widetilde h(\mathbf 0,0)=0\).
Therefore, the two sets also coincide on \(\{y=0\}\).

\medskip\noindent
Since both cases produce identical feasible regions, the sets are equal.
\end{proof}

Therefore, when applying HR, any polynomial disjunctive constraint
\(\sum_{k=0}^{d} p_k(\mathbf x) \leq 0\), can be reformulated in a way that forms exact hull and preserves polynomial structure as \( \sum_{k=0}^{d} p_k(\mathbf v)\,y^{\,d-k}\le 0\) (sum of monomials of degree $d$), instead of using $\varepsilon$‐approximation.

As with the quadratic case ($d=2$), addressed in the paper, the reformulation of polynomials for $d\ge 2$
\emph{(i)}~is exact, guaranteeing a strictly tighter continuous relaxation than $\varepsilon$‐approximation, when  $\varepsilon>0$ 
\emph{(ii)}~retains the original polynomial degree, preserving
algebraic structure of the constraints, and  
\emph{(iii)}~avoids potential numerical instabilities that arise from dividing by small values in $\varepsilon$‐approximations of the closure of the perspective function.

\section{Algebraic second-order cone representations for the Conic Exact Hull}
\label{app:conic-repr}

Section~\ref{sec:conic-gdp} shows that HR of convex quadratic disjunctive constraints admits an exact second-order cone representation (CEHR).
In \texttt{Pyomo.GDP}, constraints are passed to solvers in algebraic form, so conic-capable solvers can exploit the structure only if they recognize it from the provided algebraic representation. Since solver interfaces and presolve routines may prefer different but equivalent representations of the same cone constraint, we tested several equivalent forms of the Conic Exact Hull on the random convex benchmark set.

\paragraph{Derivation of the conic perspective constraint.}
Consider a convex quadratic inequality in a disjunct:
\begin{equation}
\label{eq:app-conic-base-q}
h(\mathbf{x}) \;=\; \mathbf{x}^{\mathsf T}Q\,\mathbf{x} + \mathbf{c}^{\mathsf T}\mathbf{x} + d \;\le\; 0,
\qquad Q \succeq 0.
\end{equation}
In HR, the closure of the perspective function constraint (for $y>0$) is
\begin{equation}
\label{eq:app-conic-persp}
\frac{\mathbf{v}^{\mathsf T}Q\,\mathbf{v}}{y} + \mathbf{c}^{\mathsf T}\mathbf{v} + d\,y \;\le\; 0,
\end{equation}
together with the standard disaggregation bounds $\mathbf{x}^{\ell}y \le \mathbf{v} \le \mathbf{x}^{u}y$
(and $y\in[0,1]$). Introduce an epigraph variable $t$ to isolate the quadratic-over-$y$ term:
\begin{equation}
\label{eq:app-conic-epi}
\mathbf{v}^{\mathsf T}Q\,\mathbf{v} \;\le\; t\,y,
\qquad
t + \mathbf{c}^{\mathsf T}\mathbf{v} + d\,y \;\le\; 0,
\qquad
t \ge 0,\; y \ge 0.
\end{equation}
For $y>0$, \eqref{eq:app-conic-epi} is equivalent to \eqref{eq:app-conic-persp} by dividing the first
inequality by $y$ and substituting into the second.
At $y=0$, the disaggregation bounds force $\mathbf{v}=\mathbf{0}$, so $\mathbf{v}^{\mathsf T}Q\mathbf{v}=0$ and \eqref{eq:app-conic-epi} implies $t=0$, matching the closure value $\bigl(\operatorname{cl}\widetilde h\bigr)(\mathbf{0},0)=0$.

\paragraph{Factorized form.}
Since $Q\succeq 0$, there exists a matrix $L$ such that
\begin{equation}
\label{eq:app-conic-factor}
Q \;=\; L\,L^{\mathsf T}.
\end{equation}
Therefore,
\begin{equation}
\label{eq:app-conic-norm}
\mathbf{v}^{\mathsf T}Q\,\mathbf{v} \;=\; \|L^{\mathsf T}\mathbf{v}\|_2^2,
\end{equation}
and the rotated-cone part of \eqref{eq:app-conic-epi} can be written as
$\|L^{\mathsf T}\mathbf{v}\|_2^2 \le t\,y$ with $t\ge 0$ and $y\ge 0$.

\paragraph{Tested reformulation representations (names match the plot legend).}
All representations below impose the same linear constraint
\(
t + \mathbf{c}^{\mathsf T}\mathbf{v} + d\,y \le 0
\),
the same nonnegativity conditions $t\ge 0$, $y\ge 0$, and the same disaggregation bounds
$\mathbf{x}^{\ell}y \le \mathbf{v} \le \mathbf{x}^{u}y$.
They differ only in the algebraic form used for the rotated second-order cone component.

\begin{itemize}
\item \textbf{Conic Exact Hull (No factorization).}
Direct quadratic-over-product inequality:
\begin{equation}
\label{eq:app-conic-nofact}
\mathbf{v}^{\mathsf T}Q\,\mathbf{v} \;\le\; t\,y.
\end{equation}

\item \textbf{Conic Exact Hull (Only factorized).}
Using \eqref{eq:app-conic-factor}--\eqref{eq:app-conic-norm}:
\begin{equation}
\label{eq:app-conic-onlyfact}
\|L^{\mathsf T}\mathbf{v}\|_2^2 \;\le\; t\,y.
\end{equation}

\item \textbf{Conic Exact Hull (No Sqrt, No Extra Var).}
Starting from \eqref{eq:app-conic-onlyfact}, use the identity
$(t+y)^2-(t-y)^2 = 4ty$ to obtain an equivalent quadratic inequality (for $t\ge 0$, $y\ge 0$):
\begin{equation}
\label{eq:app-conic-nosqrt-noextra}
4\|L^{\mathsf T}\mathbf{v}\|_2^2 + (t-y)^2 \;\le\; (t+y)^2,
\qquad t\ge 0,\; y\ge 0.
\end{equation}

\item \textbf{Conic Exact Hull (Sqrt, No Extra Var).}
Taking square roots of \eqref{eq:app-conic-nosqrt-noextra} yields the standard SOC form:
\begin{equation}
\label{eq:app-conic-sqrt-noextra}
\sqrt{\,4\|L^{\mathsf T}\mathbf{v}\|_2^2 + (t-y)^2\,} \;\le\; t+y,
\qquad t\ge 0,\; y\ge 0.
\end{equation}

\item \textbf{Conic Exact Hull (Sqrt, Extra Var).}
Introduce auxiliary variables
\begin{equation}
\label{eq:app-conic-extra-defs}
\mathbf{z} = 2L^{\mathsf T}\mathbf{v},
\qquad
w = t-y,
\qquad
s = t+y,
\qquad
s \ge 0,
\end{equation}
and impose the SOC
\begin{equation}
\label{eq:app-conic-sqrt-extra}
\sqrt{\|\mathbf{z}\|_2^2 + w^2} \;\le\; s.
\end{equation}

\item \textbf{Conic Exact Hull (No Sqrt, Extra Var).}
Using the same auxiliaries \eqref{eq:app-conic-extra-defs}, impose the squared SOC form
\begin{equation}
\label{eq:app-conic-nosqrt-extra}
\|\mathbf{z}\|_2^2 + w^2 \;\le\; s^2,
\qquad s\ge 0.
\end{equation}
\end{itemize}

\paragraph{Explicit rotated-cone membership (optional).}
For reference, \eqref{eq:app-conic-onlyfact} is equivalent to rotated second-order cone membership:
\[
\left(\frac{t}{2},\, y,\, L^{\mathsf T}\mathbf{v}\right)\in\mathcal{Q}_r
\;:=\;
\left\{(\alpha,\beta,\mathbf{w}) \ \middle|\ 2\alpha\beta \ge \|\mathbf{w}\|_2^2,\ \alpha\ge 0,\ \beta\ge 0\right\}.
\]

\paragraph{Computational comparison and choice of default representation.}
Figure~\ref{fig:app-conic-profiles} compares these Conic Exact Hull representations on the random convex
benchmark set using \texttt{Gurobi}, \texttt{BARON}, and \texttt{SCIP}. The figure also includes \textbf{BigM} as a baseline reference.

For \texttt{Gurobi} and \texttt{SCIP}, which have explicit conic capabilities, \textbf{Conic Exact Hull (No factorization)}, denoted as just Conic Exact Hull in Figure~\ref{fig:app-conic-profiles}, performs among the best representations, suggesting that both solvers recognize and exploit the rotated-cone structure directly from this algebraic form.

For \texttt{BARON}, \textbf{Conic Exact Hull (No factorization)} performs comparably to most other Conic Exact Hull representations on this benchmark. One representation performs noticeably better here, namely \textbf{Conic Exact Hull (Sqrt, No Extra Var)}, but the same representation performs poorly for \texttt{Gurobi} and \texttt{SCIP}.

Taken together, these results support using \textbf{Conic Exact Hull (No factorization)} in the direct form \eqref{eq:app-conic-nofact} as the overall default representation: it is consistently competitive and is the strongest option for solvers that actively exploit conic structure (\texttt{Gurobi} and \texttt{SCIP}), while remaining comparable to alternatives for \texttt{BARON}.

\begin{figure}[htbp]
  \centering
  \begin{subfigure}[t]{0.32\textwidth}
    \centering
    \includegraphics[width=\linewidth]{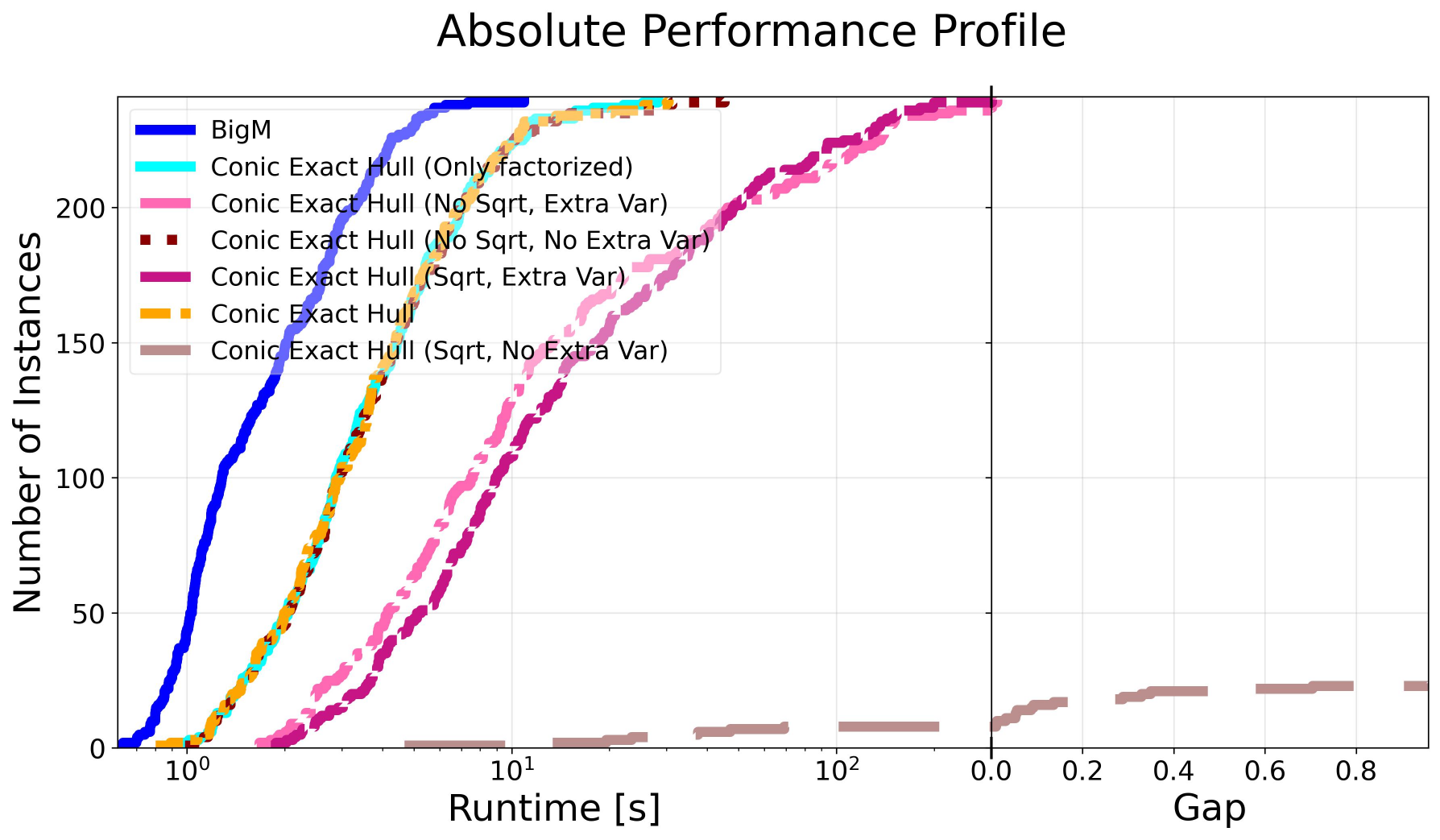}
    \caption{\texttt{Gurobi}}
    \label{fig:app-conic-gurobi}
  \end{subfigure}\hfill
  \begin{subfigure}[t]{0.32\textwidth}
    \centering
    \includegraphics[width=\linewidth]{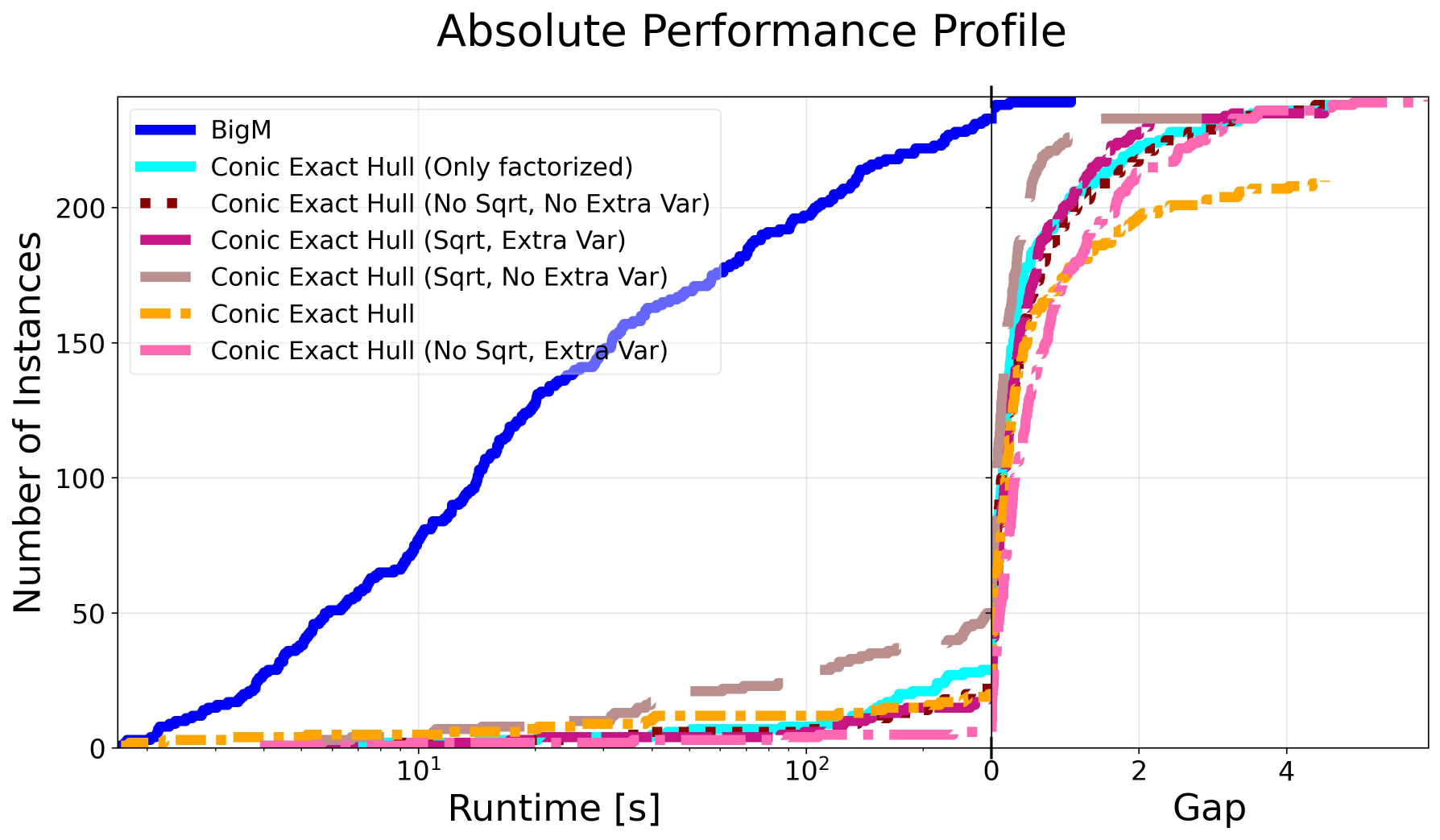}
    \caption{\texttt{BARON}}
    \label{fig:app-conic-baron}
  \end{subfigure}\hfill
  \begin{subfigure}[t]{0.32\textwidth}
    \centering
    \includegraphics[width=\linewidth]{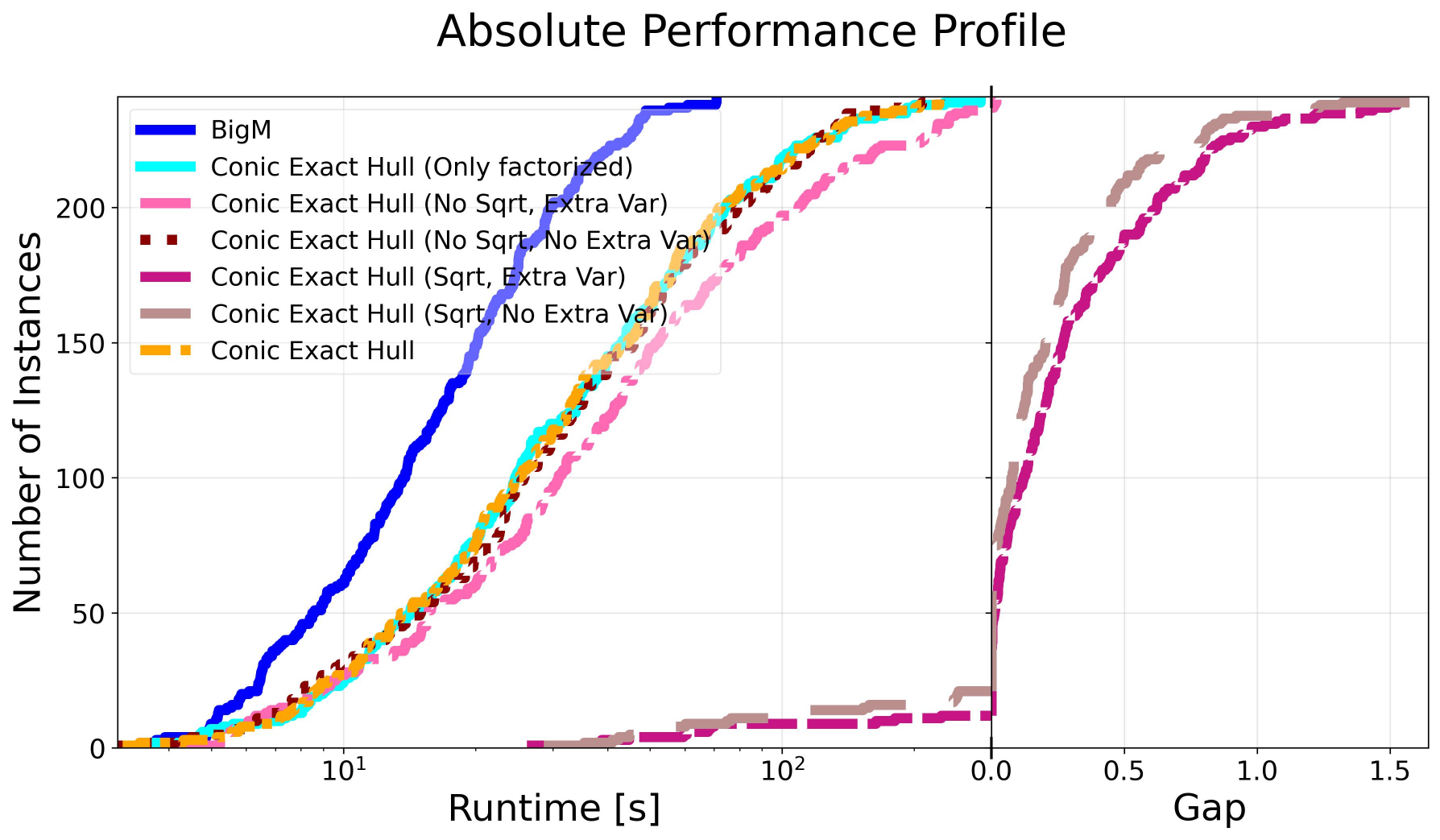}
    \caption{\texttt{SCIP}}
    \label{fig:app-conic-scip}
  \end{subfigure}
  \caption{Absolute performance profiles on the random convex benchmark set comparing different Conic Exact Hull
  reformulation representations. Legend names match the representations listed in Appendix~\ref{app:conic-repr}.}
  \label{fig:app-conic-profiles}
\end{figure}

\section{Effect of the \texorpdfstring{$\varepsilon$}{epsilon}-approximation parameter}
\label{app:epsilon}

The standard HR for nonlinear constraints uses the closure of the perspective function,
which is not defined analytically at $y=0$.
In practice, this is commonly handled with the
$\varepsilon$-approximation (Eq.~\eqref{eq:approx}), which replaces the division by $y$ with a strictly
positive denominator $(1-\varepsilon)y+\varepsilon$.
The parameter $\varepsilon$ controls a trade-off:
larger values typically improve numerical conditioning but enlarge the continuous relaxation, while smaller values make the approximation closer to the exact closure, but can worsen scaling when $y$ is small.

In the main experiments, we used the default \texttt{Pyomo} value $\varepsilon=10^{-4}$.
This appendix summarizes a sensitivity study on $\varepsilon$ using the values $\varepsilon\in\{10^{-2},10^{-3},10^{-4}\}$ for both convex (Figure~\ref{fig:eps-psd} ) and non-convex random  (Figure~\ref{fig:eps-nonconv}) benchmark sets, keeping all other modeling and solver settings identical (including the same time limits and tolerances).

Overall, changing $\varepsilon$ affected the solution time and numeric stability.
However, in most cases, the performance is qualitatively similar across the tested values, and differences are not expected materially change the main conclusions of the paper.

Additionally, the best choice of $\varepsilon$ may depend on the problem and the solver.
Identifying an ``optimal'' $\varepsilon$ for a given problem is itself a challenging tuning task with no broadly applicable, principled selection rule.
A brute-force search over $\varepsilon$ values significantly increases computational burden and makes this parameter tuning impractical.
This drawback of the $\varepsilon$-approximation approach further motivates exact reformulations that avoid introducing $\varepsilon$ altogether.


\begin{figure}[htbp]
  \centering
    \begin{subfigure}[t]{0.32\textwidth}
    \centering
    \includegraphics[width=\linewidth]{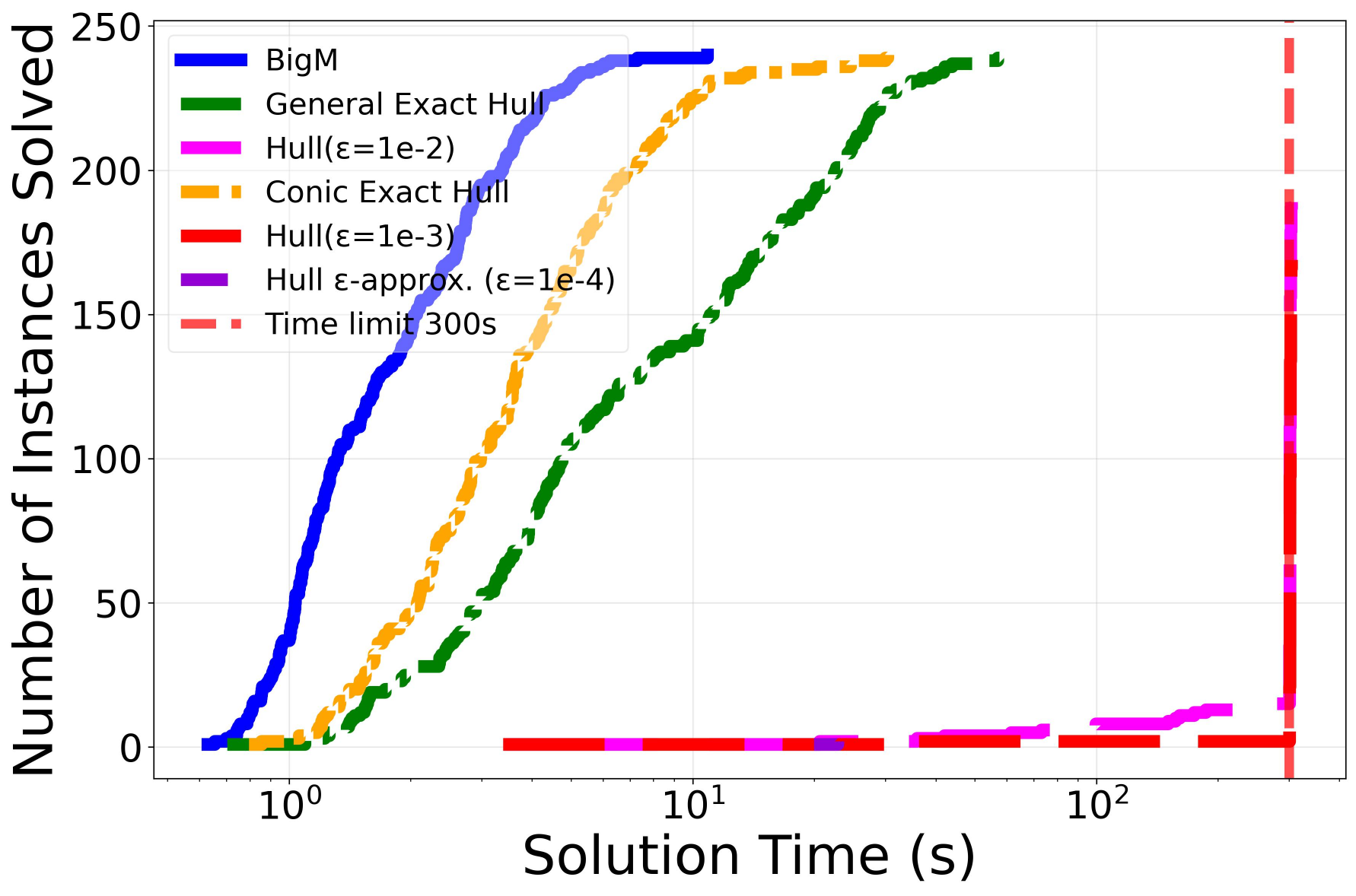}
    \caption{\texttt{Gurobi}}
    \label{fig:eps-psd-gurobi}
  \end{subfigure}\hfill
  \begin{subfigure}[t]{0.32\textwidth}
    \centering
    \includegraphics[width=\linewidth]{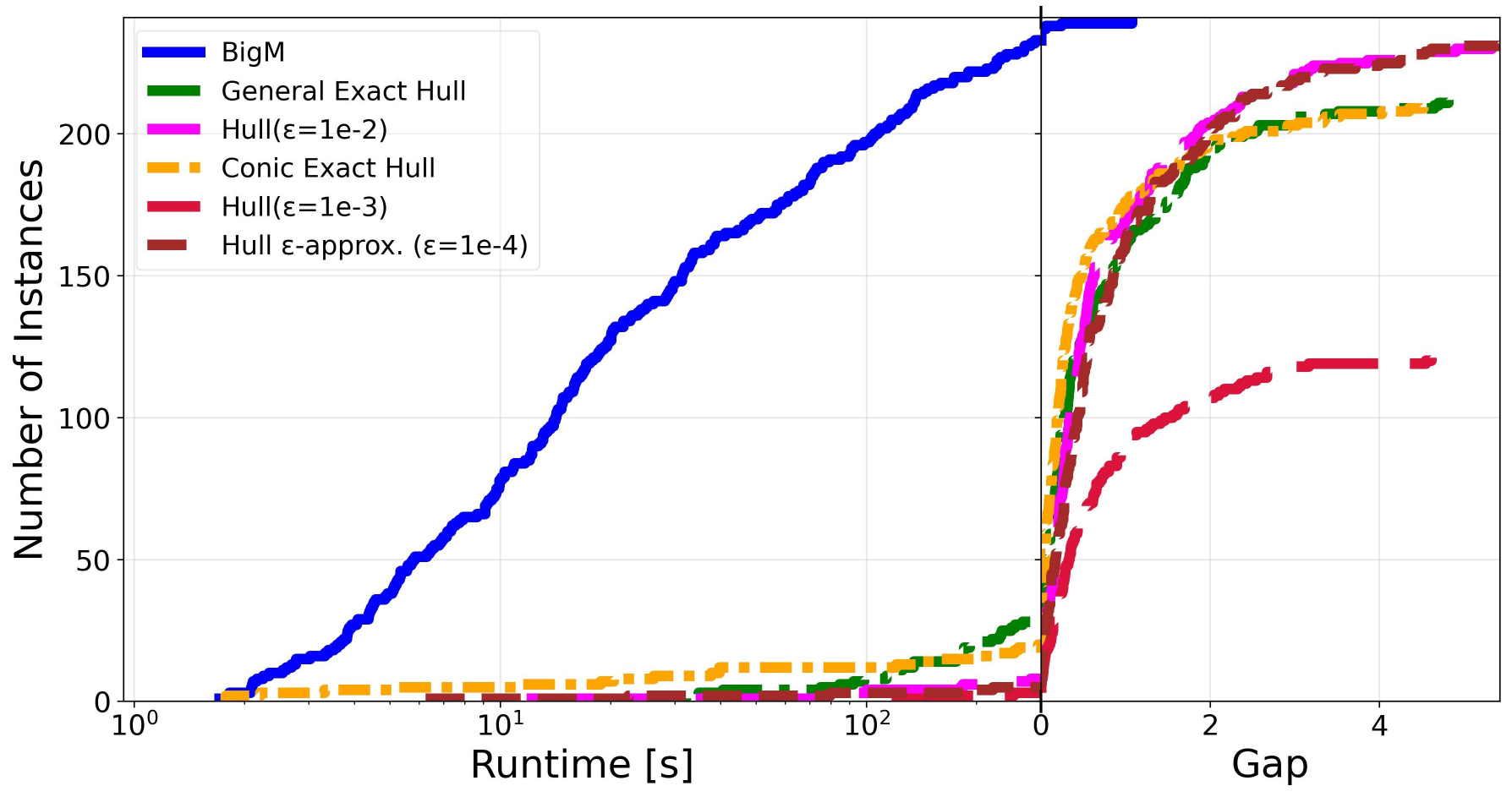}
    \caption{\texttt{BARON}}
    \label{fig:eps-psd-baron}
  \end{subfigure}\hfill
  \begin{subfigure}[t]{0.32\textwidth}
    \centering
    \includegraphics[width=\linewidth]{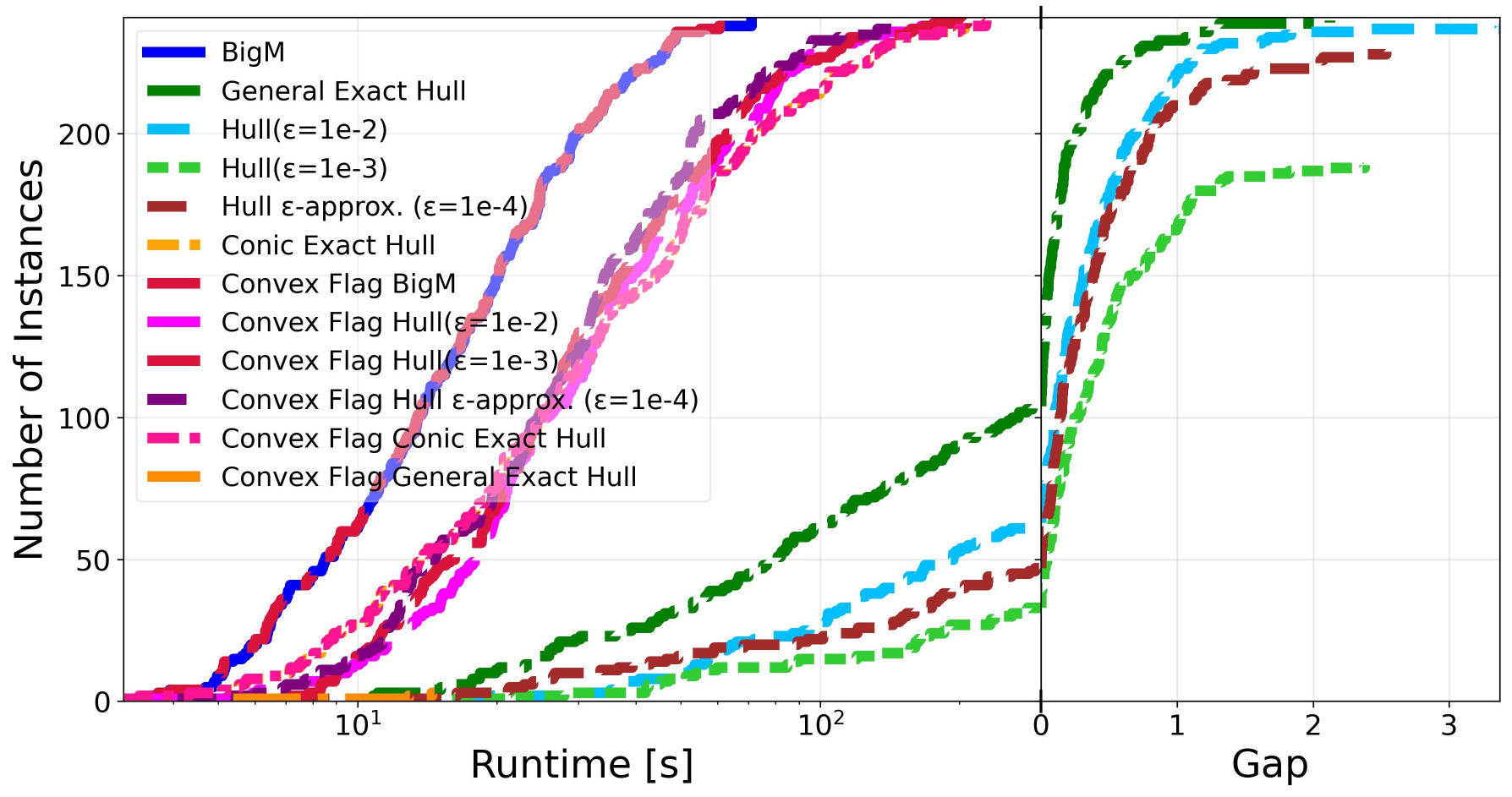}
    \caption{\texttt{SCIP}}
    \label{fig:eps-psd-scip}
  \end{subfigure}
  \caption{Sensitivity to the values of $\varepsilon$  in $\varepsilon$-approximation on the convex random benchmark set.}
  \label{fig:eps-psd}
\end{figure}

\begin{figure}[htbp]
  \centering
    \begin{subfigure}[t]{0.32\textwidth}
    \centering
    \includegraphics[width=\linewidth]{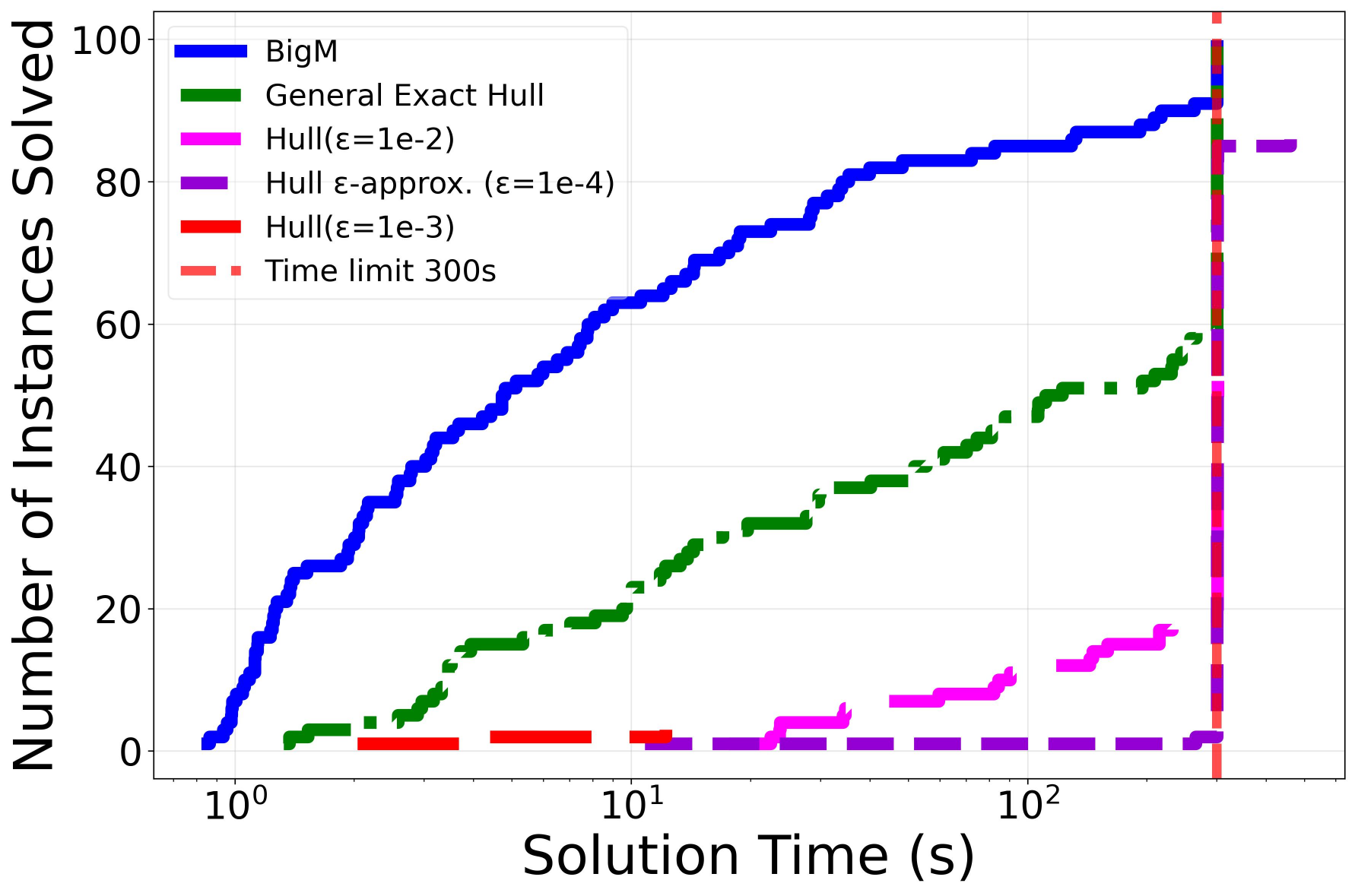}
    \caption{\texttt{Gurobi}}
    \label{fig:eps-nonconv-gurobi}
  \end{subfigure}\hfill
  \begin{subfigure}[t]{0.32\textwidth}
    \centering
    \includegraphics[width=\linewidth]{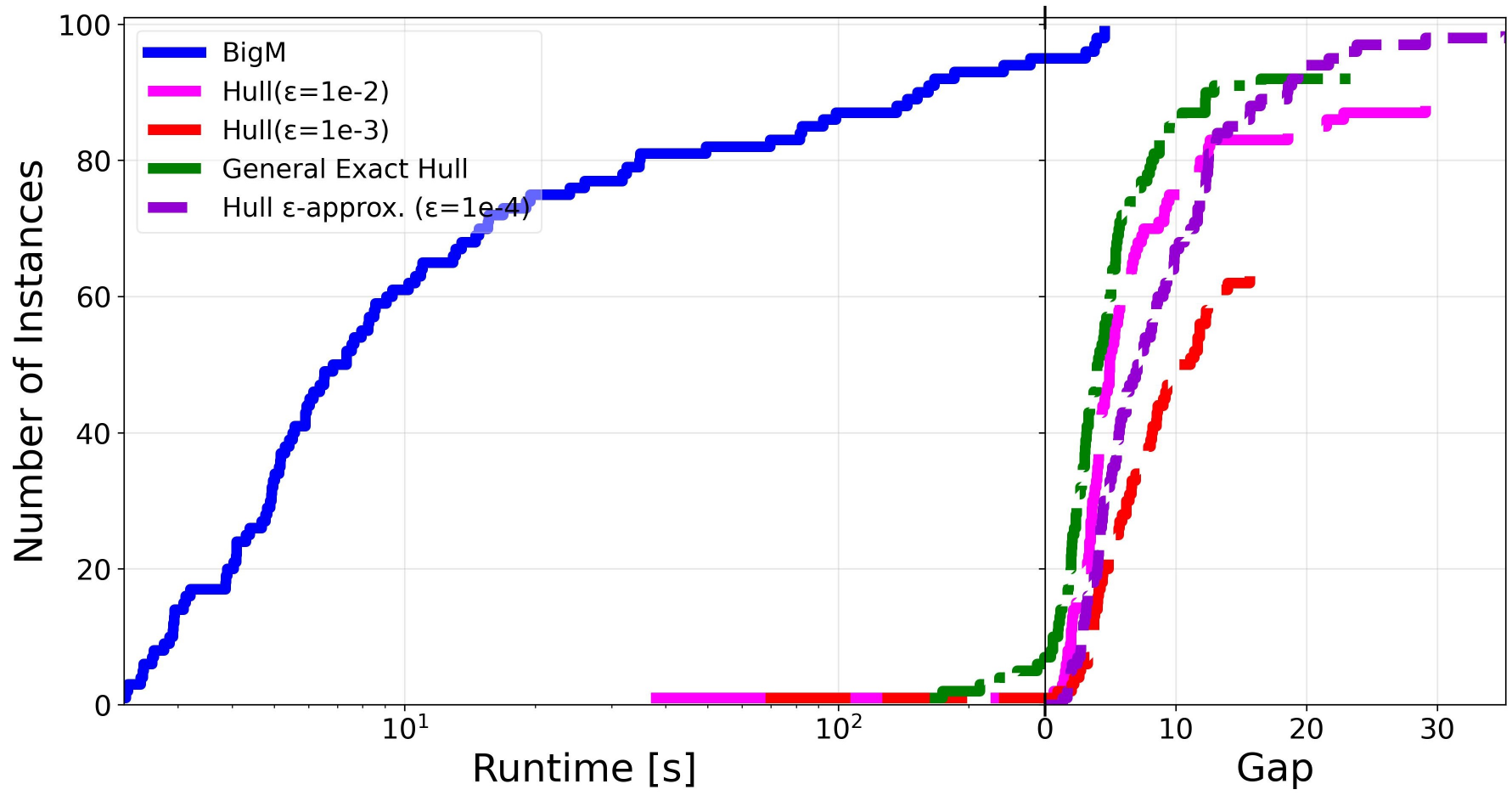}
    \caption{\texttt{BARON}}
    \label{fig:eps-nonconv-baron}
  \end{subfigure}\hfill
  \begin{subfigure}[t]{0.32\textwidth}
    \centering
    \includegraphics[width=\linewidth]{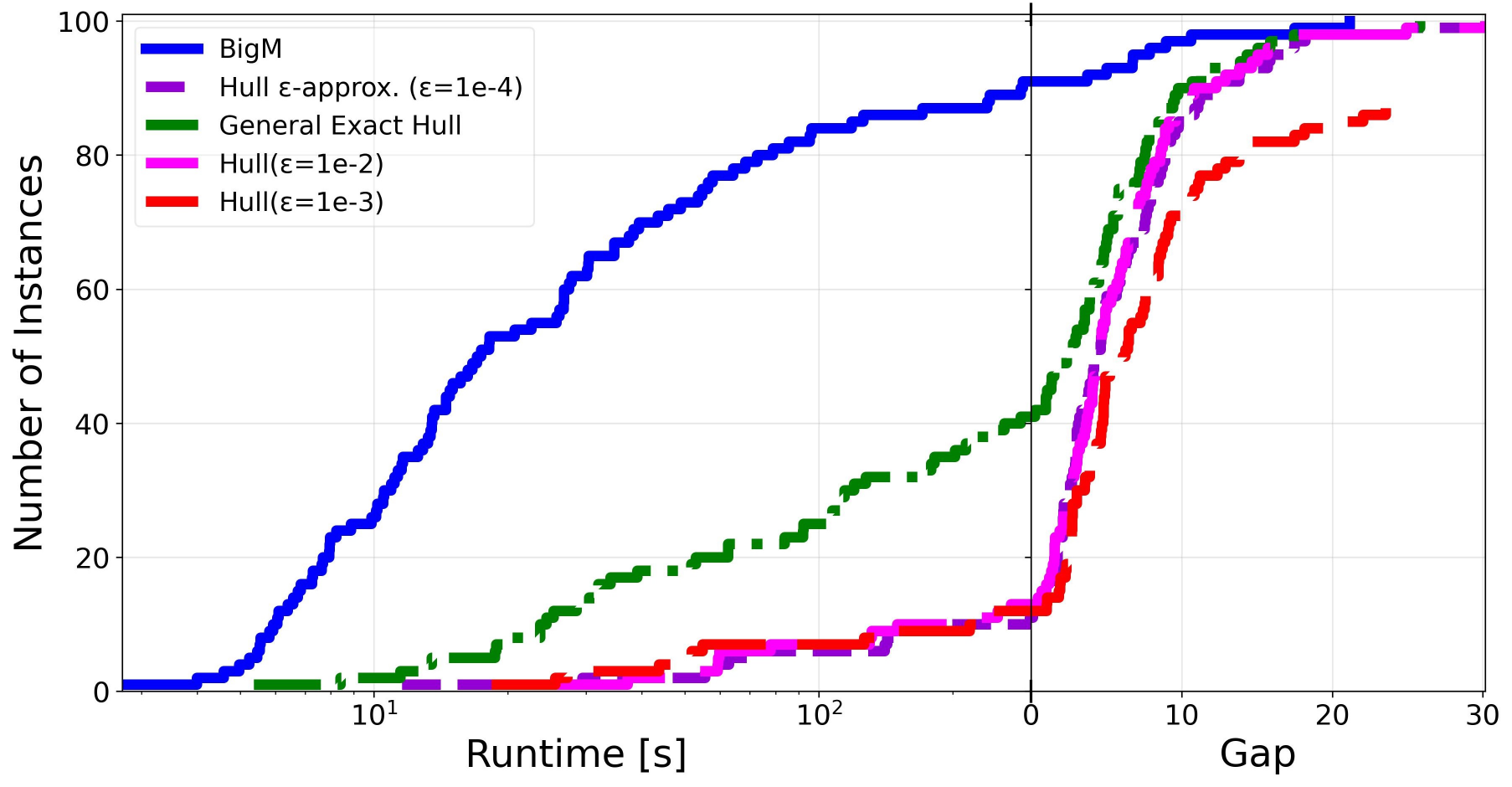}
    \caption{\texttt{SCIP}}
    \label{fig:eps-nonconv-scip}
  \end{subfigure}
  \caption{Sensitivity to the values of $\varepsilon$  in $\varepsilon$-approximation on the non-convex random benchmark set.}
  \label{fig:eps-nonconv}
\end{figure}

\section{Continuously Stirred Tank Reactor (CSTR) Network}
\label{app:cstr}

This appendix provides a description and the set of equations used to formulate the Continuously Stirred Tank Reactor (CSTR) network optimization problem considered in Section~\ref{sec:experiments}, adapted from prior formulations in the literature \cite{ovalleLogicBasedDiscreteSteepestDescent2025}. 

The system consists of $N_T$ units arranged in series, each of which can be either a reactor or a bypass.
The autocatalytic reaction $A + B \rightarrow 2B$ occurs in active reactors. 

Each stage \(n\in N=\{1,\dots,N_T\}\) may operate as a CSTR or be bypassed; exactly one stage receives the fresh feed, and exactly one stage receives the recycle stream returned from the outlet of the last stage.  
Component indices are collected in \(I=\{A,B\}\).  
Variables include molar flow rates \(F_{i,n}\) and \(FR_{i,n}\), total flow rates \(Q_n\) and \(Q_{FR,n}\), reactor volumes \(V_n\) (with volume cost \(c_n\)), reaction rates \(r_{i,n}\), the recycle flow \((R_i,Q_R)\), the product flow \((P_i,Q_P)\), and an auxiliary variables \(t_n\) are used to reduce cubic terms to quadratic form.  
Boolean variables \(YF_n\), \(YR_n\), and \(YP_n\) denote, respectively, that fresh feed enters stage \(n\), recycle enters stage \(n\), and stage \(n\) operates as a CSTR.  
All continuous variables are non-negative.

The complete GDP formulation can be presented as follows:

\begingroup
\begin{equation}
\begin{aligned}
\min \quad
& \sum_{n\in N} c_n
\\
\text{w.r.t.} \quad
& F_{i,n},\, FR_{i,n},\, r_{i,n},\, t_n,\, Q_n,\, V_n,\, c_n,
\\
& YF_n,\, YR_n,\, YP_n
\\[0.4em]
\text{s.t.} \quad
& F_{i,NT}-F0_i-FR_{i,NT}-r_{i,NT}V_{NT}=0,
\qquad \forall\, i\in I,
\\
& Q_{NT}-Q_{F0}-Q_{FR,NT}=0,
\\
& F_{i,n}-F_{i,n+1}-FR_{i,n}-r_{i,n}V_n=0,
\qquad \forall\, n\in N\setminus\{N_T\},\ i\in I,
\\
& Q_{n}-Q_{n+1}-Q_{FR,n}=0,
\qquad \forall\, n\in N\setminus\{N_T\},
\\
& F_{i,1}-P_i-R_i=0,
\qquad \forall\, i\in I,
\\
& Q_{1}-Q_P-Q_R=0,
\\
& P_iQ_{1}-F_{i,1}Q_P=0,
\qquad \forall\, i\in I,
\\
& 0.95\,Q_P-P_B=0,
\\
& V_n-V_{n-1}=0,
\qquad \forall\, n\in N\setminus\{1\},
\\
& Q_n^{2}-t_n=0,
\qquad \forall\, n\in N,
\\[0.6em]
&
\left[
\begin{aligned}
& YP_n\\
& r_{A,n}t_n+kF_{A,n}F_{B,n}=0\\
& r_{B,n}+r_{A,n}=0\\
& c_n-V_n=0
\end{aligned}
\right]
\bigveebar
\left[
\begin{aligned}
& \neg YP_n\\
& FR_{i,n}=0 \quad (i\in I)\\
& r_{i,n}=0 \quad (i\in I)\\
& Q_{FR,n}=0\\
& c_n=0
\end{aligned}
\right],
\\
& \hspace{2em} \forall\, n\in N,
\\[0.6em]
&
\left[
\begin{aligned}
& YR_n\\
& FR_{i,n}-R_i=0 \quad (i\in I)\\
& Q_{FR,n}-Q_R=0
\end{aligned}
\right]
\bigveebar
\left[
\begin{aligned}
& \neg YR_n\\
& FR_{i,n}=0 \quad (i\in I)\\
& Q_{FR,n}=0
\end{aligned}
\right],
\\
& \hspace{2em} \forall\, n\in N,
\\[0.6em]
& \underline{\bigvee}_{n\in N} YF_n,
\\
& \underline{\bigvee}_{n\in N} YR_n,
\\[0.4em]
& YP_n \iff
\left(
\bigwedge_{j\in\{1,2,\ldots,n\}} \neg YF_j
\right)\vee YF_n,
\\
& \hspace{2em} \forall\, n\in N,
\\
& YR_n \Longrightarrow YP_n,
\qquad \forall\, n\in N,
\\[0.4em]
& F_{i,n},FR_{i,n},P_i,R_i,Q_n,Q_{FR,n},Q_{F0}\ge 0,
\\
& \hspace{2em} \forall\, n\in N,\ i\in I,
\\
& Q_P,Q_R,V_n,c_n,t_n\ge 0,
\qquad \forall\, n\in N,
\\
& YF_n,YR_n,YP_n\in\{\mathrm{False},\mathrm{True}\},
\\
& \hspace{2em} \forall\, n\in N.
\end{aligned}
\end{equation}
\endgroup

The first group of equalities enforces component and mass balances (assuming constant density) in the fresh feed stage, interior stages, and the upstream splitter.  
The purity specification \(0.95\,Q_P=P_B\) guarantees required \(0.95 mol/l\) concentration of the component \(B\) in the outlet.  
Reactor volumes are constrained to be equal. 
The auxiliary relation \(Q_n^{2}=t_n\) converts cubic reaction terms to quadratic form.  
The first disjunction activates the autocatalytic rate law, volume cost, and recycle‐related flows when a stage operates as a reactor. 
The second disjunction specifies whether or not the recycle stream enters a given stage.  
Logic conditions enforce that there can be only one unreacted feed and one recycle stream.
Also, logic conditions state that a stage and all subsequent stages become a reactor once the fresh feed appears, and that recycle can only return to a reactor stage.  

\end{appendices}

\printbibliography

\end{document}